\documentclass[a4paper,12pt]{article}

\usepackage{amsmath}
\usepackage{amsthm}
\usepackage{epsfig}
\usepackage{amssymb}
\usepackage{latexsym}
\usepackage[T1]{fontenc}
\usepackage[utf8]{inputenc}
\usepackage{color}

\lineskip = 1.2\baselineskip
\textwidth=17cm \oddsidemargin=0in \evensidemargin=0in
\topmargin=-0.8in \textheight=10in

\title{Short cycle covers on cubic graphs using chosen $2$-factor}
\author{
Barbora Candráková,
Robert Lukoťka
\\[3mm]
\\{\tt candrakova@dcs.fmph.uniba.sk}
\\{\tt lukotka@dcs.fmph.uniba.sk}
\\[5mm]
Faculty of Mathematics, Physics and Informatics\\
Comenius University \\
Mlynská dolina, 842 48 Bratislava
}

\theoremstyle{definition}
\newtheorem{definition}{Definition}
\numberwithin{definition}{section}
\newtheorem{theorem}[definition]{Theorem}
\newtheorem{corollary}[definition]{Corollary}
\newtheorem{lemma}[definition]{Lemma}
\newtheorem{conjecture}[definition]{Conjecture}

\setlength\arraycolsep{2pt}

\let\epsilon=\varepsilon

\let\phi = \varphi

\begin{document}

\maketitle

\abstract{ 
We show that every bridgeless cubic graph $G$ with $m$ edges has a cycle cover of length at most $1.6 m$. Moreover, if $G$ does not contain any intersecting circuits of length $5$, then $G$ has a cycle cover of length $212/135 \cdot m \approx 1.570 m$ and if $G$ contains no $5$-circuits, then it has a cycle cover of length at most $14/9 \cdot m \approx 1.556 m$.
To prove our results, we show that each $2$-edge-connected cubic graph $G$ on $n$ vertices has a $2$-factor containing at most $n/10+f(G)$ circuits of length $5$, where the value of $f(G)$ only depends on the presence of several subgraphs arising from the Petersen graph. 
As a corollary we get that each $3$-edge-connected cubic graph on $n$ vertices has 
a $2$-factor containing at most $n/9$ circuits of length $5$ and
each $4$-edge-connected cubic graph on $n$ vertices has 
a $2$-factor containing at most $n/10$ circuits of length $5$.
}

\section{Introduction}

A \emph{cycle cover} of a graph $G=(V,E)$ is a collection of \emph{cycles} in $G$ such that each edge of the graph is contained in at least one of the cycles. (By the term \emph{cycle} we understand a graph whose  all vertices are of even degree.) If an edge of $G$ is a bridge, then it does not belong to any cycle. Thus only bridgeless graphs are of interest regarding cycle covers. Szekeres \cite{Szekeres} and Seymour \cite{Seymour} conjectured that this condition is sufficient for a graph to have a cycle cover such that each edge is covered exactly twice.
\begin{conjecture}[Cycle Double Cover Conjecture]
\label{cdcc}
Every bridgeless graph has a collection of cycles covering each edge exactly twice.
\end{conjecture}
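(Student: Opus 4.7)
The plan is to attempt the standard minimum counterexample analysis. First I would argue that a minimum counterexample $G$ must be cubic: a vertex of degree $\geq 4$ can be split into two adjacent vertices of smaller degree while preserving bridgelessness, and a vertex of degree $2$ can be suppressed; in either case a cycle double cover of the reduced graph lifts back to $G$, contradicting minimality. By a similar cut-and-paste argument across $2$- and $3$-edge cuts, $G$ must be cyclically $4$-edge-connected.

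Next I would observe that $G$ cannot be $3$-edge-colourable: the three colour classes are perfect matchings $M_1,M_2,M_3$, and the three $2$-factors $M_i \cup M_j$ form a cycle double cover. More generally, any nowhere-zero $4$-flow on $G$ yields a cycle double cover by decomposing into two parity subgraphs, so $G$ must in addition admit no nowhere-zero $4$-flow. In other words, the minimum counterexample is a cyclically $4$-edge-connected snark. At this point I would try to exploit the structure supplied by the present paper: every such $G$ has a $2$-factor $F$ consisting of circuits, almost all of which are long (the bound on the number of $5$-circuits becomes very tight for $4$-edge-connected graphs). The idea is to start from the complementary perfect matching $M = E(G) \setminus F$ together with $F$ as two of the cycles in a prospective double cover, and to iteratively add further cycles that cover each edge of $F$ a second time and each edge of $M$ twice, using parity-correcting closed walks.

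The final step would be to construct these correcting cycles. The natural attempt is a Petersen-colouring-type reduction: if one could colour the edges of $G$ by edges of the Petersen graph $P$ so that each vertex of $G$ sees three mutually adjacent edges of $P$, then the six $5$-circuits forming an explicit cycle double cover of $P$ pull back to a cycle double cover of $G$. I would try to build such a colouring locally on each short circuit of $F$ (where $P$-structure is forced) and extend it along the long circuits using the flexibility of $4$-edge-connectivity.

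The main obstacle, of course, is that this is the Cycle Double Cover Conjecture, one of the central open problems in graph theory, and every route I have sketched is known to run into an equivalent or stronger open problem: eliminating $4$-flow obstructions leads to Tutte's $5$-flow conjecture, the Petersen-colouring reduction is itself an open conjecture of Jaeger, and the local-to-global extension along long circuits has no known general mechanism. A successful proof would almost certainly require a genuinely new structural insight into cyclically $4$-edge-connected snarks beyond what the short-$2$-factor bounds in this paper provide.
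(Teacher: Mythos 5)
There is no proof to compare against here: the statement you were asked about is Conjecture~\ref{cdcc}, the Cycle Double Cover Conjecture of Szekeres and Seymour, which the paper only states as background and does not (and could not) prove --- it is one of the major open problems in the field. Your preliminary reductions are standard and correct: a minimum counterexample may be assumed cubic and cyclically $4$-edge-connected, it cannot be $3$-edge-colourable, and more generally it cannot admit a nowhere-zero $4$-flow, so it is a cyclically $4$-edge-connected snark. But everything after that point is not a proof: the step ``iteratively add parity-correcting closed walks covering each edge of $F$ a second time'' has no mechanism behind it and is exactly where all known attempts stall, and the Petersen-colouring route you invoke is itself an open conjecture of Jaeger that is strictly stronger than CDC. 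The bounds of this paper on the number of $5$-circuits in a $2$-factor (Theorem~\ref{avoid} and its corollaries) give no leverage on double covers; they control circuit lengths in one $2$-factor, not the existence of the additional cycles a double cover needs.

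So the verdict is that your proposal contains a genuine and unavoidable gap --- the entire central construction --- which you yourself candidly identify in your last paragraph. As a writeup of why the conjecture resists the obvious attacks it is accurate, but it is not, and does not claim to be, a proof; and nothing in the present paper supplies the missing ingredient.
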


The \emph{length of a cycle cover} is the sum of all cycle lengths in the cover. In this paper, we are interested in finding cycle covers such that their lengths are as small as possible. Alon and Tarsi \cite{AT} conjectured the following bound on the length of the shortest cycle cover.
\begin{conjecture}[Short Cycle Cover Conjecture]
\label{sccc}
Every bridgeless graph with $m$ edges has cycle cover of length at most $1.4m$.
\end{conjecture}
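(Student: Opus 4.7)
The plan is to reduce the problem to cubic bridgeless graphs and then construct a short cycle cover by combining a carefully chosen 2-factor with auxiliary chord-cycles, using the paper's main 2-factor theorem as a key input. By the standard reductions (suppressing 2-valent vertices and splitting nontrivial 2- and 3-edge-cuts, both of which preserve cycle-cover length up to additive constants), the statement reduces to proving a cover of length at most $21n/10$ for a cyclically 4-edge-connected cubic graph $G$ on $n$ vertices, where $m = 3n/2$. Applying the paper's main theorem yields a 2-factor $F$ with at most $n/10$ circuits of length $5$; set $M = E(G) \setminus F$, so $|F| = n$, $|M| = n/2$ and $|F| + |M| = m$.

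The cycle cover I would build consists of $F$ together with a family of auxiliary even subgraphs designed to cover $M$ efficiently. For a matching edge $e = uv$ with $u,v$ on the same 2-factor circuit $C$, the natural auxiliary cycle is the shorter $u$-$v$ arc of $C$ together with $e$. For two matching edges $e, e'$ linking the same pair of circuits $C_1, C_2$, a \emph{ladder cycle} consisting of $e, e'$ and two short arcs (one in each $C_i$) covers both $M$-edges at once, with length at most $|C_1|/2 + |C_2|/2 + 2$. A preliminary pairing of the inter-circuit matching edges, obtained via an auxiliary matching in the multigraph whose vertices are the circuits of $F$ and whose edges are the elements of $M$, minimises the total added length; a small residual of unpaired matching edges is handled by ad hoc cycles.

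The main obstacle is the global accounting: the total auxiliary length must not exceed $21n/10 - n = 11n/10$. The discharging scheme assigns each auxiliary cycle's length to the matching edges it covers and to the 2-factor circuits it traverses. Circuits of length $\ge 8$ absorb their overhead cheaply, so the estimate is only tight on circuits of length $5$, $6$, and $7$. I expect the $n/10$ bound on $5$-circuits supplied by this paper's main theorem to be precisely the quantity needed to contain the length-$5$ overhead (the Petersen subgraph being the canonical extremal configuration, on which the $1.4m$ bound is saturated). The central technical work is therefore to establish companion theorems controlling the number of $6$- and $7$-circuits in a suitably chosen 2-factor, and to show that the three bounds combine additively: the cumulative overhead contributed by all circuits of lengths $5$, $6$, $7$ is at most $n/10$. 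Producing this strengthened 2-factor theorem, which simultaneously controls three circuit lengths rather than one, is the step I expect to be technically hardest; once it is in place, the ladder-and-discharging construction closes the $21n/10$ budget and delivers the conjectural $1.4m$ bound.
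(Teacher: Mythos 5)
The statement you are trying to prove is the Alon--Tarsi Short Cycle Cover Conjecture, which the paper states as Conjecture~\ref{sccc} precisely because it is open: the paper proves only $1.6m$ (and $14/9\cdot m$ under extra hypotheses), and it recalls that the conjecture implies the Cycle Double Cover Conjecture. So there is no proof in the paper to compare against, and your outline does not close the gap; it defers exactly the steps that are the open problem. Concretely, the accounting at the heart of your plan is unsupported and, as stated, incorrect. With $m=3n/2$ the target is $2.1n$, but in the type of construction used here (and in Kaiser et al.) the cost is about $7/3$ per vertex \emph{regardless of circuit length}: even a $2$-factor with no circuits of length $5$, $6$, or $7$ at all only yields $14/9\cdot m\approx 1.556m$ (Corollary~\ref{corWithout5}), because every circuit $C$, long or short, contributes roughly $|C|+2\lfloor|C|/4\rfloor$ on $F$-edges plus twice the matching edges. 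Hence your claim that circuits of length at least $8$ ``absorb their overhead cheaply'' and that the total overhead of short circuits is at most $n/10$ has no basis; in your ladder construction the arcs can have length up to $|C_i|/2$ per matching edge, so the auxiliary length can far exceed the $11n/10$ budget, and no discharging scheme is actually exhibited. The ``companion theorems'' bounding $6$- and $7$-circuits in a $2$-factor do not exist in the literature and, even granted, would not suffice under your scheme.

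There are also problems in the reduction step. Reducing the general conjecture to cubic graphs requires subcubic or \emph{weighted} cubic versions (as the paper notes in the introduction), not the unweighted cubic case you target; suppressing degree-$2$ vertices and splitting $2$- and $3$-cuts ``up to additive constants'' is not compatible with proving a sharp multiplicative constant $1.4$ unless the induction is carried out with the same constant and no additive slack; and the unique tight example, the Petersen graph, is excluded both from Theorem~\ref{avoid} and from Corollary~\ref{cor4EdgeConn}, so the extremal case of the conjecture lies exactly where your key input is unavailable. In short, the proposal is a programme whose hardest components (the strengthened $2$-factor theorem and the global length accounting) are missing, not a proof.
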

The shortest cycle cover of the Petersen graph has length $21$ and consists of $4$ cycles. 
The upper bound given by Conjecture \ref{sccc} is,  therefore, sharp
and the constant $1.4$ cannot be improved.

The Short Cycle Cover Conjecture relates to several other well known conjectures for example, as shown in \cite{JT}, it implies the Cycle Double Cover Conjecture.
Jamshy, Raspaud, and Tarsi \cite{JRT} showed that graphs with nowhere-zero $5$-flow, which,  according the Tutte's $5$-flow conjecture, are all bridgeless graphs, have a cycle cover of length at most $1.6m$. Máčajová et al. \cite{MRTZ} showed that the existence of a Fano-flow using at most $5$ lines of the Fano plane on bridgeless cubic graphs 
implies the existence of a cycle cover of length at most $1.6m$.
They also showed that the existence of a Fano-flow using at most $4$ lines of the Fano plane on bridgeless cubic graphs, 
which is a consequence of the Fulkerson Conjecture,
implies the existence of a cycle cover of length at most $14/9 \cdot m \approx 1.556m$.

The best known general result on short cycle covers is due to Alon and Tarsi \cite{AT} and Bermond, Jackson, and Jaeger \cite{BJJ} who proved that every bridgeless graph with $m$ edges has a cycle cover of total length at most $5/3 \cdot m$.
On the other hand, there are numerous results on short cycle covers for special classes of graphs. We refer the reader to Chapter 8 of the book \cite{CQbook} by Zhang, which is devoted to short cycle covers.
Significant attention has been devoted to cubic graphs since any result on short cycle covers 
of subcubic or weighted cubic graphs can be extended to a cycle cover of general graphs.
The best result for cubic graphs up to date is by Kaiser et al. \cite{kaiser}. They proved that there exists a cycle cover of length at most $34/21 \cdot m\approx1.619m$ for each bridgeless cubic graph and there exists a cycle cover of length at most $44/27 \cdot m \approx1.630m$ for each bridgeless graph with minimum degree $3$.

The main obstacle not allowing to improve the bound in the approach of Kaiser et al. \cite{kaiser} are the circuits of length $5$ contained in the graph.
Indeed, Hou and Zhang \cite{HZ} proved that if $G$ is a bridgeless cubic graph such that all circuits of length $5$ are disjoint, then $G$ has a cycle cover of length at most $351/225 \cdot m \approx 1.6044m$.
Moreover, if $G$ contains no circuits of length $5$, then the length of the cover is at most $1.6m$.

We slightly refine the approach of Kaiser et al. \cite{kaiser} and combine it with a technique for avoiding certain number of $5$-circuits in $2$-factors of cubic graphs introduced in \cite{LMMS} and \cite{CL}, which allows us to improve the bound on the length of the shortest cycle cover as follows.

\newtheorem*{thmmain}{Theorem~\ref{thmmain}}
\begin{thmmain}
Every bridgeless cubic graph with $m$ edges has a cycle cover of length at most $1.6m$.
\end{thmmain}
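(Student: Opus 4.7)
The strategy is to combine the 2-factor result announced in the abstract with a sharpening of the cycle-cover construction of Kaiser, Král', Lidický, Nejedlý, and Šámal. Let $G$ be a bridgeless cubic graph with $n$ vertices and $m=3n/2$ edges. First, I would invoke the 2-factor lemma announced in the abstract to obtain a 2-factor $F$ of $G$ whose number $t_5$ of circuits of length $5$ satisfies $t_5 \le n/10 + f(G)$, where $f(G)$ depends only on the presence of a bounded collection of Petersen-type subgraphs. Let $M=E(G)\setminus F$ be the complementary perfect matching, so that $|F|=2m/3$ and $|M|=m/3$.

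Next, I would follow the Kaiser et al.\ framework: use $F$ together with $M$ to build a cycle cover whose total length is expressible as $|F|$ plus contributions that cover $M$ and remedy the odd circuits of $F$. In this accounting, $5$-circuits cost the most per vertex; indeed, the worst case for Kaiser et al., where their bound $34/21\cdot m$ is attained, is precisely when many vertices lie on $5$-circuits. I would refine the accounting so that whenever $t_5 \le n/10$, the resulting cover has length at most $1.6m$. Standard suppression arguments let me assume along the way that $F$ has no triangles, so that among odd circuits only those of length at least $5$ need to be tracked carefully.

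It remains to handle the case $f(G)>0$, when $G$ contains Petersen-type subgraphs forcing a few excess $5$-circuits in every ``good'' 2-factor. For each such subgraph I would describe a local modification of the cover — either by locally rerouting the 2-factor or by directly replacing the cover restricted to the subgraph by shorter cycles — which saves enough length to absorb the excess $5$-circuits counted by $f(G)$. The Petersen graph itself provides a convenient base case, since its shortest cycle cover has length $21 < 24 = 1.6\cdot 15$, leaving substantial slack.

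The main technical obstacle is quantitative. The raw Kaiser et al.\ bound overshoots the target by about $0.019m$, so the entire improvement must be squeezed out of the inequality $t_5 \le n/10$. This requires carrying out the cycle-cover accounting sharply enough to isolate the exact coefficient of $t_5$, and then verifying that the Petersen-type corrections are genuinely local, nonnegative in their effect, and do not double-count when several exceptional subgraphs coexist in $G$.
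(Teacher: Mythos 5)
Your high-level strategy is the same as the paper's (a $2$-factor with at most $n/10+f(G)$ five-circuits fed into the Kaiser--Kr\'a\v{l}--Lidick\'y--Nejedl\'y--\v{S}\'amal cover machinery, with the Petersen graph as a trivial base case), but as written the two steps that carry all the weight are asserted rather than proved, and one hypothesis you need is missing from what you invoke. First, the $2$-factor statement quoted from the abstract gives only the bound on the number of $5$-circuits; the proof also needs the same $2$-factor $F$ to satisfy that $G/F$ is $5$-odd-edge-connected (equivalently, the complementary perfect matching meets every $3$-edge-cut in exactly one edge). Without that, the framework you propose to ``follow'' does not start: the covers are built from a nowhere-zero $\mathbb{Z}_2^{\,2}$-flow on $G/F$ (Jaeger), and the restrictions on colour patterns of $4$-, $6$- and $8$-circuits come from splitting lemmas that preserve $5$-odd-connectivity. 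Securing both properties simultaneously is exactly why the paper reproves the $2$-factor result as Theorem~\ref{avoid} with the connectivity condition as item 1. Second, ``refine the accounting so that $t_5\le n/10$ yields $1.6m$'' is the entire technical content of the cover section, not a routine sharpening: one needs two different covers, bounded separately per circuit length (Lemmas~\ref{1CoverLength} and~\ref{2CoverLength}), combined in the ratio $1/3:2/3$ (not the ratio that gives $34/21$), to arrive at a bound of the form $7/3\cdot|V(G)|+2/15\cdot v_5-2/3\cdot|{\cal P}_2\cup{\cal P}_3|$; only with the coefficient $2/15$ per vertex on $5$-circuits does $v_5\le |V(G)|/2$ close the $0.019m$ gap you mention. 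You correctly flag this as the main obstacle, but the proposal does not supply the construction or the coefficients, so the claim remains unverified.

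Third, your plan to absorb $f(G)$ by ``local modifications of the cover'' is the right instinct but is left entirely unspecified, and the paper's mechanism is not a post-hoc rerouting: the saving is built into the rainbow colouring itself. For each special subgraph one interior $5$-circuit is forced to receive the pattern RRRGB by permuting colours on the $4$- or $5$-edge cut around it, and one must check compatibility with the pattern constraints already imposed on the neighbouring circuit (the delicate case being an adjacent $8$-circuit, handled by an explicit table). This saves $2$ edges per special subgraph in the first cover, hence $2/3$ per subgraph after the convex combination, which dominates the excess $2/15\cdot f(G)$; the non-double-counting you worry about is guaranteed by the pairwise disjointness of the special subgraphs (Lemma~\ref{disj}). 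Until these three pieces are actually carried out, the proposal is a plausible programme matching the paper's route, not a proof.
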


Due to our technique we can state the results in terms of the number of circuits of length $5$ that are contained in the graph, improving the results of Hou and Zhang for cubic graphs with restrictions on circuits of length $5$.
\newtheorem*{thm5Circuit}{Theorem~\ref{thm5Circuit}}
\begin{thm5Circuit}
Every bridgeless cubic graph with $m$ edges and at most $k$ circuits of length $5$
has a cycle cover of length at most 
$14/9 \cdot m + 1/9 \cdot k$.
\end{thm5Circuit}

\newtheorem*{corDisjoint5}{Corollary~\ref{corDisjoint5}}
\begin{corDisjoint5}
Every bridgeless cubic graph with $m$ edges, such that all circuits of length $5$ are disjoint, 
has a cycle cover of length at most $212/135 \cdot m \approx 1.570 m$.
\end{corDisjoint5}

\newtheorem*{corWithout5}{Corollary~\ref{corWithout5}}
\begin{corWithout5}
Every bridgeless cubic graph with $m$ edges without circuits of length $5$ has a cycle cover of length at most $14/9 \cdot m \approx 1.556 m$.
\end{corWithout5}

To prove our results we modify the statement of the main theorem from \cite{CL}, which deals with the $5$-circuits in $2$-factors of cubic graphs,  to match the requirements of our proof. 
Indeed, we prove that each $2$-edge-connected cubic graph $G$ on $n$ vertices has a $2$-factor containing at most $n/10+f(G)$ circuits of length $5$, where the value of $f(G)$ depends on the presence of several subgraphs arising from the Petersen graph. This improvement is interesting on its own, as from the reformulated statement we can obtain several corollaries that were not available before.

\newtheorem*{cor3EdgeConn}{Corollary~\ref{cor3EdgeConn}}
\begin{cor3EdgeConn}
Let $G$ be a cyclically $3$-edge-connected cubic graph on $n$ vertices other than the Petersen graph. 
Then $G$ has a $2$-factor with at most $1/9 \cdot n$ circuits of length $5$. 
\end{cor3EdgeConn}
\newtheorem*{cor4EdgeConn}{Corollary~\ref{cor4EdgeConn}}
\begin{cor4EdgeConn}
Let $G$ be a cyclically $4$-edge-connected cubic graph on $n$ vertices other than the Petersen graph. 
Then $G$ has a $2$-factor with at most $1/10 \cdot n$ circuits of length $5$. 
\end{cor4EdgeConn}

\section{Avoiding circuits of length $5$}
In this section we refine the results from \cite{CL} where we proved that every $2$-edge-connected cubic graph on $n$ vertices has a $2$-factor containing at most $2(n-2)/15$ circuits of length $5$. This bound is attained for infinitely many graphs, however, as we show, we can decrease this number to $n/10$ in certain cases.
Although, the statement of our theorem is different than the statement in \cite{CL},
large part of our proof is identical to the proof in \cite{CL}. We will omit several routine steps, refering the reader to \cite{CL}, to focus on the main ideas.

Let $G$ be a $2$-edge-connected cubic graph and let $E(G)=\{e_1, e_2, \ldots, e_m\}$. 
The \emph{boundary} of a set $S \subseteq V(G)$, denoted by $\delta(S)$, is the set of edges with exactly one endpoint in $S$.
We can represent each perfect matching $M$ of $G$ as a characteristic vector ${\bf p}=(p_1,p_2,\ldots,p_m)$ from ${\mathbb R}^m$ such that $p_i=1$ if $e_i \in M$, and $p_i=0$ otherwise. For convenience we 
identify perfect matchings
and their characteristic vectors when no confusion can occur. 
The \emph{perfect matching polytope} ${\cal M}(G)$ of a graph $G$ is the convex hull in ${\mathbb R}^m$ of the set of characteristic vectors of all perfect matchings of $G$. 
For any point ${\bf p}\in{\cal M}(G)$ we denote by $p_e$ the value of the coordinate of ${\bf p}$
corresponding to the edge $e$.  Equivalently, ${\cal M}(G)$ can be described as a set of vectors from ${\mathbb R}^m$ such that the following properties hold \cite{edmonds}: 
\begin{eqnarray*}
p_e &\ge& 0 \hspace{2em} \text{for each } e\in E(G),\\
\sum_{e\in \delta(\{v\})} p_e &=&1 \hspace{2em} \text{for each } v\in V(G), \\
\sum_{e\in \delta(S)} p_e &\ge& 1 \hspace{2em} \text{for each } S\subseteq V(G) \text{ with } |S| \text{ odd}. 
\end{eqnarray*}
The main idea of our proof consists of defining a linear function on ${\cal M}(G)$
and minimizing it. 
It is well-known that the minimum of a linear function over a polytope is attained at some vertex 
of the polytope, which is a characteristic vector of a perfect matching \cite{edmonds}.
Therefore, there exists a perfect matching that attains the optimal value. To bound the optimal value we use the fact that the point $(1/3, 1/3, \ldots, 1/3)$ lies within ${\cal M}(G)$ for each bridgeless cubic graph $G$. 

Let $G$ be a bridgeless cubic graph and let $F$ be a $2$-factor of $G$. Let $I(F,G)$ be the number of $5$-circuits in $F$. 
We say that a graph is \emph{colourable} if it admits a $3$-edge-colouring, otherwise it is \emph{uncolourable}.
Similarly as in \cite{CL}, we consider small uncolourable subgraphs of $G$ which can be separated by a $2$- or $3$-edge-cut. Let $C$ be a $3$-circuit of $G$. We can \emph{reduce} $C$ by contracting it into a vertex $v$.
We call the reverse operation as \emph{inserting a triangle} into $v$.
Let $C$ be a $2$-circuit of $G$. We can \emph{reduce} $C$ by deleting it and by adding a new edge $e$
that connects the two new vertices of degree $2$.
We call the reverse operation as \emph{inserting a 2-circuit} into $e$.

\begin{figure}[htp]
\center
\begin{tabular}{ccccc}
\includegraphics{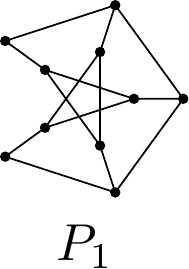} & & & &\includegraphics{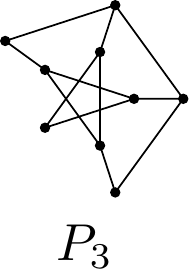}\\
\end{tabular}
\caption{Graphs $P_2$ and $P_3$.}
\label{fig}
\end{figure}
Let $P_2$ be the Petersen graph with one edge removed 
and
let $P_3$ be the Petersen graph with one vertex removed (see Figure~\ref{fig}). 
We denote by ${\cal P}_2(G)$ the set of subgraphs of $G$ isomorphic to $P_2$. 
We denote by ${\cal P}_{2m}(G)$  the set of subgraphs of $G$ isomorphic to $P_2$ with a triangle inserted  into one vertex or with a $2$-circuit inserted into an edge.
We denote by ${\cal P}_3(G)$ the set of subgraphs of $G$ isomorphic to the graph $P_3$ that 
are not contained in any subgraph from ${\cal P}_2(G)$ or ${\cal P}_{2m}(G)$. 
Let ${\cal P}^*_{2m}(F,G)$ be the set of those graphs $S$ from ${\cal P}_{2m}(G)$ such that $F$ contains a $5$-circuit inside $S$.
Let ${\cal P}(G)={\cal P}_2(G) \cup {\cal P}_3(G) \cup {\cal P}_{2m}(G)$. 
Note that all subgraphs of $G$ from ${\cal P}$ are uncolourable.
It is also not hard to see that subgraphs from ${\cal P}(G)$ are pairwise disjoint.
\begin{lemma}\label{disj}
Let $G$ be a graph not isomorphic to the Petersen graph with $0$, $1$, or $2$ 
triangles or $2$-circuits inserted.
Then the subgraphs from ${\cal P}(G)$ are pairwise disjoint. 
\end{lemma}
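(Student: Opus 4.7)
The strategy is to argue by contradiction. Assume that $G$ is not isomorphic to the Petersen graph with $0$, $1$, or $2$ triangles or $2$-circuits inserted, and yet two distinct subgraphs $S_1, S_2 \in {\cal P}(G)$ share at least one vertex. The key structural observation is that each $S \in {\cal P}(G)$ has at most $3$ vertices of degree $2$ in $S$ (two for $S \in {\cal P}_2(G) \cup {\cal P}_{2m}(G)$, three for $S \in {\cal P}_3(G)$) and at most $12$ vertices in total. Since $G$ is cubic, every remaining vertex has degree exactly $3$ in $S$ and hence contributes all three of its incident edges and all three neighbors to $S$.

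I would then run a cubic-propagation argument. If $v \in V(S_1) \cap V(S_2)$ satisfies $\deg_{S_i}(v) = 3$ for both $i$, then all three edges at $v$ and all three neighbors of $v$ lie in $S_1 \cap S_2$. Iterating, the component of $S_1 \cap S_2$ containing $v$ can only terminate at vertices that are of degree $2$ in one of the $S_i$. Since each $S_i$ has at most $3$ such boundary vertices, the overlap must satisfy $|V(S_1) \cap V(S_2)| \geq \min(|V(S_1)|, |V(S_2)|) - 3 \geq 6$. The complementary case, in which every shared vertex is a boundary vertex of at least one $S_i$, is handled by a similar neighborhood bookkeeping: interior vertices of each $S_i$ adjacent to a shared boundary vertex get pulled into the intersection, and the propagation starts again.

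Each $S_i$ arises from a copy of the Petersen graph by removing an edge or a vertex and possibly inserting a triangle or a $2$-circuit. Using the rigidity of the Petersen graph (any connected subgraph on $\geq 7$ vertices with at most $3$ vertices of degree less than $3$ extends uniquely to Petersen), the large shared intersection forces $S_1$ and $S_2$ to come from a single underlying copy $H$ of the Petersen graph in $G$. A case analysis over the six pairings of types (${\cal P}_2$, ${\cal P}_3$, ${\cal P}_{2m}$) then shows that $|V(S_1) \cup V(S_2)| \leq 14$ and that the only way $G$ can contain $S_1 \cup S_2$ while remaining cubic and bridgeless is for $G$ to coincide with $H$ together with at most $2$ inserted triangles or $2$-circuits, contradicting our hypothesis. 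For pairs in ${\cal P}_3(G)^2$, the defining clause of ${\cal P}_3$ — that no $P_3 \in {\cal P}_3(G)$ is contained in any member of ${\cal P}_2(G)$ or ${\cal P}_{2m}(G)$ — kills the remaining overlap configurations in which two $P_3$'s could be merged into a $P_2$ or a $P_{2m}$.

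The main obstacle will be the case analysis itself, particularly the ${\cal P}_{2m}$-cases in which the inserted triangle or $2$-circuit sits at the interface between $S_1$ and $S_2$ and can be interpreted as belonging to either subgraph. Cleanly enumerating how the boundary vertices of $S_1$ and $S_2$ may be positioned on the underlying Petersen graph will likely require an explicit coordinate description of Petersen (e.g., via the Kneser graph $K(5,2)$) together with the vertex-, edge-, and arc-transitivity of its automorphism group to reduce the combinatorial bookkeeping to a small number of representative configurations.
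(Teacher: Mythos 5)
Your opening observations (each special subgraph has at most three degree-$2$ vertices, everything else has degree $3$, so a shared vertex propagates) are fine, and this is a genuinely different route from the paper, but as written the argument has two concrete gaps at exactly the places where the work lies. First, the bridge from ``large intersection'' to ``same underlying Petersen copy'' fails as stated. The rigidity statement you invoke (a connected subgraph of the Petersen graph on at least $7$ vertices with at most $3$ vertices of degree less than $3$ extends uniquely) has an unsatisfiable hypothesis below $9$ vertices: a $7$-vertex subgraph of the Petersen graph has at most $8$ edges (the three deleted vertices span at most two edges), and a connected graph on $7$ vertices with at most three vertices of degree less than $3$ needs degree sum at least $15$; similarly on $8$ vertices one gets at least four vertices of degree less than $3$. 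In fact the hypothesis can only hold when the subgraph already is $P_3$, $P_2$ or the whole Petersen graph. Moreover the graph you would apply it to, $S_1\cap S_2$, inherits low-degree vertices from the degree-$2$ vertices of \emph{both} $S_1$ and $S_2$ and from the frontier where the propagation stops, so it will typically have many more than three such vertices, and your propagation bound only promises about $6$ shared vertices (and even the inequality $|V(S_1)\cap V(S_2)|\ge\min(|V(S_1)|,|V(S_2)|)-3$ is asserted, not proved: the up to six stopping vertices can separate part of the smaller subgraph from the start vertex). Finally, ``a single underlying copy $H$ of the Petersen graph in $G$'' need not exist at all --- for $S\in{\cal P}_2(G)$ the deleted edge need not be present in $G$ --- so the conclusion of that step has to be reformulated before it can even be used.

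Second, and more seriously, the decisive claim that $G$ must coincide with $H$ plus at most two insertions is exactly the content of the lemma and is deferred to an unspecified case analysis: knowing $|V(S_1\cup S_2)|\le 14$ does not prevent $G$ from being arbitrarily large with $S_1\cup S_2$ attached to the rest of $G$ through a small edge cut --- that is precisely how special subgraphs normally sit inside $G$, hanging on $2$- or $3$-edge-cuts. You would need to show that a proper overlap forces the union to close up into a $3$-regular graph (for instance, that all $15$ Petersen edges are forced to be present, so $G$ equals that graph by cubicity and connectivity), and no mechanism for this is indicated. For comparison, the paper's proof sidesteps the overlap analysis entirely: it lets $E_2$ be the edges lying in nontrivial $2$-edge-cuts and $E_3$ the edges in $3$-edge-cuts around members of ${\cal P}_3(G)$, checks that no edge of $E_2\cup E_3$ lies inside a special subgraph, and concludes that every special subgraph is a full component of $G-(E_2\cup E_3)$; two intersecting special subgraphs are then the same component, hence equal. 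If you want to keep your direct approach, the missing ingredients are a correctly stated and proved uniqueness step that applies to intersections with up to six low-degree vertices, and the argument that cubicity plus the overlap forces $S_1\cup S_2$ to exhaust $G$.
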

\begin{proof}
Let $E_2$ be the set of edges in some $2$-edge-cut separating more than $2$ vertices in $G$.
Let $E_3$ be the set of edges in a $3$-edge-cut separating a graph from ${\cal P}_3(G)$.
One can easily check that edges from $E_2 \cup E_3$ cannot be in the subgraphs from ${\cal P}(G)$.
Suppose for a contradiction that some element $x$, either an edge or a vertex, 
is in both $S_1 \in {\cal P}(G)$ and $S_2 \in {\cal P}(G)$
(we know $x\not \in E_2 \cup E_3$). Let $S$ be the component of
$G-(E_2 \cup E_3)$. Then $S=S_1=S_2$ and the lemma follows.
\end{proof}

We use ${\cal P}$, ${\cal P}_2$, ${\cal P}_3$, ${\cal P}_{2m}$, and ${\cal P}^*_{2m}(F)$ instead of 
${\cal P}(G)$, ${\cal P}_2(G)$, ${\cal P}_3(G)$, ${\cal P}_{2m}(G)$, and ${\cal P}^*_{2m}(F,G)$,
respectively,
when no confusion can occur. 
We will refer to subgraphs from ${\cal P}$ as \emph{special subgraphs}, and $2$- and $3$-circuits inside subgraphs from ${\cal P}_{2m}(G)$ as \emph{special $2$- and $3$-circuits}.

We will prove the following.

\begin{theorem} \label{avoid}
Let $G$ be a $2$-edge-connected cubic graph other than the Petersen graph. 
Then $G$ has a $2$-factor $F$ such that 
\begin{enumerate}
\item $G/F$ is $5$-odd-edge-connected,
\item $I(F,G) \le 1/10 \cdot |V(G)|+ 1/3 \cdot |{\cal P}_2(G)| + 1/10 \cdot |{\cal P}_3(G)| + 2/15 \cdot |{\cal P}^*_{2m}(F,G)|$.
\end{enumerate}
\end{theorem}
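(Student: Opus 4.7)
The plan is to carry out the linear-programming argument sketched just before the theorem and already employed in \cite{CL}, but with a sharper choice of edge weights that pushes the baseline contribution from the $2/15$ coefficient of \cite{CL} down to $1/10$, offloading the slack into the surcharges attached to the special subgraphs. Since $G$ is cubic, finding a $2$-factor $F$ is equivalent to finding a perfect matching $M = E(G) \setminus F$, so the task reduces to producing the matching via an LP over ${\cal M}(G)$.

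Concretely, I would construct a weight vector $w \in {\mathbb R}^{E(G)}_{\ge 0}$ with the property that for every perfect matching $M$ and $F = G \setminus M$,
\[
I(F,G) \;\le\; \sum_{e \in M} w_e \;+\; \tfrac13|{\cal P}_2| + \tfrac1{10}|{\cal P}_3| + \tfrac2{15}|{\cal P}^*_{2m}(F,G)|,
\]
and with $\tfrac13 \sum_e w_e \le \tfrac1{10}|V(G)|$. Minimizing $\sum_e w_e x_e$ over ${\cal M}(G)$ then returns, at a vertex of the polytope, a perfect matching $M^\ast$ whose value is at most the value at the interior point $(\tfrac13,\ldots,\tfrac13)\in{\cal M}(G)$, namely $\tfrac13 \sum_e w_e \le |V(G)|/10$; combined with the displayed inequality this yields the bound in~(2) for the corresponding $2$-factor $F^\ast$.

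The construction of $w$ is the core of the proof and is local. Each $5$-circuit $C$ of $F$ is surrounded by exactly five matching edges, so the weights on those edges must cover the unit cost of $C$; however, because two vertex-disjoint $5$-circuits of $F$ can share a matching edge on their boundary, a simple uniform assignment $w_e = 1/5$ only yields a bound of $n/5$. To cut this in half, one uses the $2$-factor structure to route each $5$-circuit's charge through matching edges that are not simultaneously used by another such circuit, except in a small number of Petersen-like local configurations. I would mirror \cite{CL} in this routing, with modified bookkeeping that assigns smaller weights outside of ${\cal P}$ and larger, tailored weights inside each $S \in {\cal P}$; the weights inside $S$ are chosen from a finite case analysis on how $M$ can intersect $S$ (a short explicit subgraph of the Petersen graph, possibly with a triangle or $2$-circuit inserted), and the surcharges $\tfrac13$, $\tfrac1{10}$, $\tfrac2{15}$ are exactly the amounts by which the worst such intersection exceeds the $|V(S)|/10$ uniform bound. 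Lemma~\ref{disj}, applied after absorbing the small excluded graphs in its hypothesis into a base case, ensures that these local modifications are additive across distinct special subgraphs.

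Finally, part (1), the $5$-odd-edge-connectivity of $G/F$, is imposed as a side constraint via the local-exchange argument of \cite{CL}: an odd cut of size at most $3$ in $G/F$ lifts to a $2$- or $3$-edge-cut of $G$ along which $M$ may be swapped without changing its $w$-weight, so the weight-minimum matching may be assumed to have no such small odd cut in the quotient. The main obstacle I foresee is the detailed case analysis inside the special subgraphs: verifying that every intersection pattern of $M$ with each $S \in {\cal P}$ is correctly paid for by the local weights plus the assigned surcharge, and that the constants $1/3$, $1/10$, and $2/15$ are in each case the smallest that work. This is a finite but delicate computation, and it is precisely the part the authors signal that they inherit, with modifications, from the main proof in \cite{CL}.
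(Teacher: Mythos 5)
Your proposal correctly identifies the LP-over-${\cal M}(G)$ scaffolding and the use of the interior point $(1/3,\ldots,1/3)$, and part~(1) is indeed handled by restricting to matchings that meet every $3$-edge-cut in a single edge (a property already automatic for every matching appearing in a convex combination yielding $(1/3,\ldots,1/3)$, rather than via a separate local-exchange argument). However, two essential ingredients of the actual proof are missing, and without them the argument as written does not close.

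First, you have collapsed the role of the reduction machinery. The proof does not simply ``absorb the small excluded graphs of Lemma~\ref{disj} into a base case''; it requires five separate reduction lemmas applied to a smallest counterexample (Lemmas~\ref{triangleLemma}--\ref{lemma3edgeCut2}) establishing girth at least $5$, that every $2$- and $3$-edge-cut separates two uncolourable pieces, and, crucially, that ${\cal P}_{2m}(G)=\emptyset$. Each of these reductions must be checked to preserve $5$-odd-edge-connectivity of $G/F$ and to respect the target inequality, and the $2/15$ coefficient attached to ${\cal P}^*_{2m}$ is used \emph{only} inside these reductions, not in a weight function. Your sketch offers no mechanism that lets the constant $2/15$ enter the final bound, since after the reductions ${\cal P}^*_{2m}$ is empty.

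Second, and more fundamentally, you are missing the combinatorial lemma that converts ``few $5$-circuits in the chosen $2$-factor'' into ``few relative to $|V(G)|$.'' The LP step alone yields $I(F_M,G)\le \tfrac16|{\cal C}_5| + \tfrac43|{\cal P}_2| + |{\cal P}_3|$, with $|{\cal C}_5|$ the number of $5$-circuits in $G$ outside special subgraphs. To reach $\tfrac1{10}|V(G)|+\cdots$ one needs the separate counting estimate $|V(G)-V_{P_2}-V_{P_3}|\ge\tfrac53|{\cal C}_5|$ (Lemma~\ref{lemmaVert}), whose proof is a discharging/injection argument showing on average each non-special vertex lies in at most three $5$-circuits (via $|V_4|\le|V_1\cup V_2|$ and a case analysis of the neighbourhood of a vertex in four $5$-circuits). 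Your proposal replaces this with a vague ``route each $5$-circuit's charge through matching edges not used by another circuit,'' but no such routing exists uniformly: boundaries of distinct $5$-circuits of $F$ can share matching edges, and the required condition $\tfrac13\sum_e w_e \le \tfrac1{10}|V(G)|$ would force $|{\cal C}_5|$ to be roughly at most $\tfrac{6}{25}|V(G)|$, which is strictly stronger than what Lemma~\ref{lemmaVert} provides (it gives $|{\cal C}_5|\le\tfrac35|V(G)|$ up to special-vertex corrections). The actual proof avoids this because its bound on $I(F_M,G)$ is not $f(M)$ alone but $f(M)-\tfrac14|{\cal C}_5|+|{\cal P}_2|+|{\cal P}_3|$; the negative $-\tfrac14|{\cal C}_5|$ term, combined with Lemma~\ref{lemmaVert}, is what produces the factor $\tfrac1{10}$.
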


To prove Theorem~\ref{avoid} we consider some smallest counterexample $G$ to the theorem and show that it has girth at least $5$, contains no $2$- or $3$-edge-cut that separates a colourable subgraph, and has no subgraphs from ${\cal P}_{2m}$. As the first step we introduce several reduction lemmas.

\subsection{Reductions for the proof of Theorem~\ref{avoid}}
The first three reductions will guarantee that 
each smallest counterexample to Theorem~\ref{avoid} has girth at least $5$.

\begin{lemma}
\label{triangleLemma}
All $2$- and $3$-circuits of each smallest counterexample to Theorem~\ref{avoid} are special.
\end{lemma}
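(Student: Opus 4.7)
The plan is a minimality/reduction argument. Suppose for contradiction that $G$ is a smallest counterexample to Theorem~\ref{avoid} containing a non-special $2$- or $3$-circuit $C$. Apply the standard reduction: contract $C$ if it is a triangle, or delete its two vertices and join the two outer neighbors by a new edge if $C$ is a $2$-circuit, obtaining a cubic graph $G'$ with $|V(G')| = |V(G)| - 2$. A short check confirms that $G'$ is $2$-edge-connected (a bridge of $G'$ would pull back to a bridge of $G$) and $G' \not\cong$ Petersen (otherwise $G$ would be the Petersen graph with a triangle or $2$-circuit inserted and $C$ would lie inside a subgraph from ${\cal P}_{2m}(G)$, contradicting its non-specialness). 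Observe also that since $C$ is non-special and since $P_2, P_3$ have girth $5$, the Petersen-based subgraph counts satisfy $|{\cal P}_2(G)| = |{\cal P}_2(G')|$, $|{\cal P}_3(G)| = |{\cal P}_3(G')|$, and $|{\cal P}_{2m}(G)| = |{\cal P}_{2m}(G')|$.

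By the minimality of $G$, Theorem~\ref{avoid} applies to $G'$ and yields a $2$-factor $F'$ satisfying both conclusions. I lift $F'$ to a $2$-factor $F$ of $G$ in the natural way: $F$ inherits the values of $F'$ on edges of $G$ that survive the reduction, and on the re-inserted triangle or $2$-circuit the values are forced by the $2$-factor condition. Since the reduction is local, every small odd edge cut of $G/F$ corresponds to an odd edge cut of $G'/F'$ of the same size, so the $5$-odd-edge-connectivity of $G'/F'$ transfers to $G/F$.

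It remains to control $I(F,G)$. A circuit of $F'$ passing through the reduction site lifts to a circuit of $F$ longer by exactly $2$, hence can contribute a new $5$-circuit to $F$ only if it was a $3$-circuit of $F'$; any additional short circuit in $F$ created by the lift is a $2$-circuit, not a $5$-circuit. In the generic case no such offending $3$-circuit exists, so $I(F,G)=I(F',G')$, and since the other terms on the right-hand side of the Theorem~\ref{avoid} bound are unchanged while $|V(G)|$ is two larger than $|V(G')|$, the inequality for $F'$ in $G'$ immediately gives the required inequality for $F$ in $G$, contradicting that $G$ is a counterexample. The delicate case is when $F'$ contains a $3$-circuit through the reduction site; this corresponds to a triangle of $G'$ incident to the contracted vertex (triangle case) or containing the newly added edge ($2$-circuit case), and its lift adds one to $I(F,G)$. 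Since a naive vertex count gives only $1/5$ of extra slack on the right, insufficient to absorb an extra $5$-circuit, the plan is to reselect $F'$ so as to avoid the offending $3$-circuit, using the structural flexibility of the perfect-matching-polytope machinery of \cite{CL} to locally reroute $F'$ at the triangle of $G'$ whose edges realise the $3$-circuit. Ensuring that such a local rerouting is always possible without damaging the other conclusions of Theorem~\ref{avoid} for $G'$ is the main obstacle.
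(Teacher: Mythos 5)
Your reduction is the one the paper uses, but the proof as written has a genuine gap at exactly the point the lemma needs. You leave open the ``delicate case'' in which $F'$ contains a $3$-circuit through the reduction site and propose to handle it by reselecting or locally rerouting $F'$, admitting this is the main obstacle. No rerouting is needed, because that case simply cannot occur: conclusion~1 of Theorem~\ref{avoid} guarantees that $G'/F'$ is $5$-odd-edge-connected, and a $3$-circuit of $F'$ would become a vertex of degree $3$ in $G'/F'$, i.e.\ an odd edge cut of size $3<5$. Hence $F'$ has no $3$-circuit at all, the lifted circuit through the reduction site has length at least $7$ (and the only other circuit the lift can create, in the $2$-circuit case, is a $2$-circuit), so $I(F,G)\le I(F',G')$ in every case. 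This one-line use of the connectivity conclusion is the paper's key point, and without it your argument does not close.

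A second, smaller error: your claim that $|{\cal P}_2|$, $|{\cal P}_3|$ and $|{\cal P}_{2m}|$ are unchanged is false, and the girth-$5$ argument does not give it. Girth only shows that the non-special $2$- or $3$-circuit itself lies in no $P_2$ or $P_3$ subgraph of $G$; it does not prevent the contracted vertex (or the new edge) from lying inside a \emph{new} special subgraph of $G'$. For instance, if $G$ contains $P_3$ with the triangle inserted at one of its vertices, then $G'$ acquires a $P_3$ subgraph that $G$ does not have; similarly a new $P_{2m}$ subgraph can appear. (Only a new $P_2$ is excluded, since it would pull back to a $P_{2m}$ subgraph of $G$ containing $C$, contradicting non-specialness.) The paper copes with this via Lemma~\ref{disj}: the special subgraphs of $G'$ are pairwise disjoint, so at most one new one can contain the reduction site, and its cost (at most $2/15$, or $1/10$ for a $P_3$) is absorbed by the slack $2/10$ coming from $|V(G')|=|V(G)|-2$; so the arithmetic survives, but not for the reason you state. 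On the positive side, your observation that $G'\cong$ Petersen would force $C$ to be special (because $G$ would then contain a $P_{2m}$ subgraph through $C$) is correct and neatly replaces the paper's direct verification of that exceptional case, and your transfer of $5$-odd-edge-connectivity from $G'/F'$ to $G/F$ (extra loop or subdivided edge) is fine.
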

\begin{proof}
Let $G$ be some smallest counterexample to Theorem~\ref{avoid} and suppose that $G$ contains a $3$-circuit $C$ that is not special.
Let $G'=G/C$. Graph $G'$ has a $2$-factor $F'$ satisfying Theorem~\ref{avoid}
and we can easily extend it to a $2$-factor $F$ of $G$.
Since $F'$ is $5$-odd-edge-connected, we do not create a new $5$-circuit in $F$ and $I(F,G) \le I(F',G')$. 
The $3$-circuit is not special, hence we have ${\cal P}_2(G')={\cal P}_2(G)$, but removing the triangle might
create a new subgraph isomorphic to $P_3$ or $P_{2m}$ in $G'$.
However, by Lemma~\ref{disj} only one such subgraph may be created (in case $G'$ is an exceptional graph from Lemma~\ref{disj} one can easily see that $G$ is not a counterexample to Theorem~\ref{avoid}).
Therefore, $|{\cal P}_3(G')| + |{\cal P}^*_{2m}(F',G')| \le |{\cal P}_3(G)| + |{\cal P}^*_{2m}(F,G)| +1$.
For the graph $G'$ we have $V(G')=V(G)-2$.
It follows that
\begin{eqnarray*}
I(F,G) &\le& I(F',G') \\
       &\le& 1/10 \cdot |V(G')|+ 1/3 \cdot |{\cal P}_2(G')| + 1/10 \cdot |{\cal P}_3(G')| + 2/15 \cdot |{\cal P}^*_{2m}(F',G')|\\ 
       &\le& 1/10 \cdot (|V(G)| - 2) + 1/3 \cdot |{\cal P}_2(G)| + 1/10 \cdot |{\cal P}_3(G)| + 2/15 \cdot |{\cal P}^*_{2m}(F,G)| + 2/15\\
       &<& 1/10 \cdot |V(G)| + 1/3 \cdot|{\cal P}_2(G)| + 1/10 \cdot |{\cal P}_3(G)| + 2/15 \cdot |{\cal P}^*_{2m}(F,G)|.
\end{eqnarray*}
The graph $G/F$ has one loop more than $G'/F'$, and since $G'/F'$ is $5$-odd-edge-connected, so is $G/F$, which is a contradiction with $G$ being a counterexample to Theorem~\ref{avoid}.
Finally, if $G'$ is isomorphic to the Petersen graph, then it can be easily verified that $G$ fulfils Theorem~\ref{avoid}, which is again a contradiction.

Similarly, we can show that each smallest counterexample to Theorem~\ref{avoid} contains only special $2$-circuits. We create the graph $G'$ by reducing a $2$-circuit in $G$.
The graph $G/F$ may now differ from $G'/F'$ in two ways: either $G/F$ has one extra loop, 
or it has one edge subdivided with a vertex of degree $2$.
In both cases, $5$-odd-edge-connectivity of $G'/F'$ implies $5$-odd-edge-connectivity of $G/F$, which contradicts the choice of $G$.
\end{proof}

\begin{lemma}
Each smallest counterexample to Theorem~\ref{avoid} contains no $4$-circuit.
\end{lemma}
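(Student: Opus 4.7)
The plan is to mirror the approach of Lemma~\ref{triangleLemma}: take a smallest counterexample $G$ to Theorem~\ref{avoid} that contains a $4$-circuit $C=v_1v_2v_3v_4$, build a smaller graph $G'$ by reducing $C$, invoke the theorem on $G'$ to obtain a $2$-factor $F'$, extend $F'$ to a $2$-factor $F$ of $G$, and derive a contradiction.

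By Lemma~\ref{triangleLemma} I may assume that $G$ contains no non-special $2$- or $3$-circuits, so in particular $C$ is induced and the external edges $f_i=v_iu_i$ meet four distinct vertices $u_1,\ldots,u_4$ (otherwise two of the $v_i$'s would share a neighbour, producing a non-special short circuit). The reduction deletes $v_1,\ldots,v_4$ and replaces the four external edges by two new edges, chosen from one of the three possible pairings of $\{u_1,u_2,u_3,u_4\}$. I would show that at least one pairing produces a $2$-edge-connected cubic graph $G'$ that is not isomorphic to the Petersen graph and creates at most one new special subgraph (appealing to Lemma~\ref{disj}); the induction hypothesis then supplies a $2$-factor $F'$ of $G'$ satisfying Theorem~\ref{avoid}.

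To extend $F'$ to $F$ I would proceed by cases according to how many of the two new edges lie in $F'$ and how many in the complementary perfect matching $M'=E(G')\setminus F'$. In each case there is a natural choice of edges inside $C$ completing the extension: if both new edges belong to $F'$ one takes two opposite edges of $C$; if both belong to $M'$ one takes all four edges of $C$, adding a single circuit of length $4$ which contributes nothing to $I(F,G)$; in the remaining case one uses three consecutive edges of $C$, which lengthens the relevant $F'$-circuit by at least $4$ edges. A short case analysis shows that none of these extensions produces a new $5$-circuit inside or touching $C$, so $I(F,G)\le I(F',G')$. Combined with the decrease $|V(G')|=|V(G)|-4$, which contributes $4/10$ to the bound, this comfortably absorbs the at most $2/15$ or $1/10$ correction coming from a newly created special subgraph, contradicting the assumption that $G$ is a counterexample.

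The main obstacle is coping with the degenerate configurations in which none of the three pairings yields a clean $G'$: two external edges may share an endpoint (creating a loop or multi-edge after reduction), the reduction may introduce a small odd edge-cut that spoils the $5$-odd-edge-connectivity of $G'/F'$, or the natural extension may merge $F'$-circuits in a way that creates a $5$-circuit. In each such degenerate case the plan is to show, by edge-cut arguments analogous to those used in the triangle reduction, that $C$ must already be contained in a special subgraph from ${\cal P}$, or that $G$ can be verified by direct inspection to satisfy Theorem~\ref{avoid}, ruling out $G$ as a counterexample and completing the contradiction.
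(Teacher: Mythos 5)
Your overall plan (delete the $4$-circuit, re-pair the four external endpoints, apply the theorem to the smaller graph, extend $F'$, and absorb the at most one new special subgraph into the gain of $4/10$ from the four deleted vertices) is exactly the paper's treatment of its \emph{generic} case, and your three extension cases according to which of the two added edges lie in $F'$ match the paper's. But there is a genuine gap at the very first step: you claim that Lemma~\ref{triangleLemma} forces the four external neighbours to be distinct, ``otherwise two of the $v_i$'s would share a neighbour, producing a non-special short circuit.'' That is only true for \emph{adjacent} vertices of the $4$-circuit (a common neighbour of $v_1$ and $v_2$ gives a triangle); a common external neighbour of the \emph{opposite} vertices $v_1$ and $v_3$ creates only further $4$-circuits, so nothing proved so far excludes it. These configurations genuinely occur in arbitrarily large graphs, and your fallback for ``degenerate'' cases does not apply to them: a $4$-circuit is never contained in a special subgraph (the graphs in ${\cal P}$ contain no $4$-circuits), and there is no reason $G$ should be small enough to verify by inspection. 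The paper handles them with two separate reductions that your proposal has no counterpart for: if $w_1=w_3$ and $w_2=w_4$, delete all six vertices and join the two resulting $2$-valent vertices by a new edge (a drop of six vertices); if exactly one pair coincides, contract $v_1,v_2,v_3,v_4$ and $w_1=w_3$ into a single vertex (a drop of four). Without these, the induction simply does not go through when opposite vertices of the $4$-circuit share a neighbour.

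Two smaller points. First, the correction caused by a newly created special subgraph can be as large as $1/3$ (a new member of ${\cal P}_2$), not ``$2/15$ or $1/10$''; the argument still works because $4/10>1/3$, but the worst case should be stated correctly. Second, in the subcase where both added edges lie in $F'$ and you insert two opposite edges of $C$, ruling out a new $5$-circuit is not automatic: you need that neither $w_1w_2$ nor $w_3w_4$ lies on a $3$-circuit of $F'$, which the paper deduces from the $5$-odd-edge-connectivity of $G'/F'$; similarly, the $5$-odd-edge-connectivity of $G/F$ must be checked in each extension case (the paper does this by describing how $G/F$ arises from $G'/F'$ via subdivisions, loops and vertex identifications), not only in exceptional situations.
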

\begin{proof}
Let $G$ be some smallest counterexample to Theorem~\ref{avoid}. 
According to Lemma~\ref{triangleLemma}, $G$ has no circuits of lengths $2$ and $3$ outside special subgraphs. Suppose that $G$ contains a $4$-circuit $C_4 = v_1v_2v_3v_4v_1$. Note that a special subgraph cannot intersect $C_4$. Let $w_1$, $w_2$, $w_3$, and $w_4$ be the neighbours of $v_1$, $v_2$, $v_3$, and $v_4$ outside of $C_4$, respectively. Since $G$ has no circuits shorter than $4$ outside special subgraphs, such neighbours exist.
We use one of the three following reductions depending on which of the vertices $w_1, w_2,w_3$, and $w_4$ are identical.

\medskip
{\noindent Reduction 1:}
If both $w_1=w_3$ and $w_2=w_4$, then 
we construct $G'$ by deleting the vertices $v_1$, $v_2$, $v_3$, $v_4$, $w_1=w_3$, and $w_2=w_4$ and by adding 
a new edge between the two resulting $2$-valent vertices. It is easy to extend the $2$-factor $F'$ of $G'$ to a
$2$-factor $F$ of $G$ without adding new circuits of length $5$, therefore, $I(F,G) \le I(F',G')$. The construction can create one special subgraph, hence $|{\cal P}_2(G')| + |{\cal P}_3(G')| + |{\cal P}^*_{2m}(F',G')| \le |{\cal P}_2(G)| + |{\cal P}_3(G)| + |{\cal P}^*_{2m}(F,G)| +1$. 
For the graph $G'$ we have $V(G')=V(G)-6$.
It follows that
\begin{eqnarray*}
I(F,G) &\le& I(F',G') \\
       &\le& 1/10 \cdot |V(G')|+ 1/3 \cdot |{\cal P}_2(G')| + 1/10 \cdot |{\cal P}_3(G')| + 2/15 \cdot |{\cal P}^*_{2m}(F',G')|\\ 
       &\le& 1/10 \cdot (|V(G)| - 6) + 1/3 \cdot |{\cal P}_2(G)| + 1/10 \cdot |{\cal P}_3(G)| + 2/15 \cdot |{\cal P}^*_{2m}(F,G)| + 1/3\\
       &<& 1/10 \cdot |V(G)| + 1/3 \cdot|{\cal P}_2(G)| + 1/10 \cdot |{\cal P}_3(G)| + 2/15 \cdot |{\cal P}^*_{2m}(F,G)|.
\end{eqnarray*}

The graph $G/F$ either has extra loops, or it has one extra edge subdivided by a vertex with loops as compared to $G'/F'$.
In both cases, $5$-odd-edge-connectivity of $G'/F'$ implies $5$-odd-edge-connectivity of $G/F$.
\medskip

{\noindent Reduction 2:}
Assume that only one pair of vertices is identical, say $w_1=w_3$. We contract $v_1$, $v_2$, $v_3$, $v_4$, and $w_1=w_3$ into a new vertex to create $G'$. Again, it is easy to extend the $2$-factor $F'$ of $G'$ to a
$2$-factor $F$ of $G$ without adding new circuits of length $5$, therefore, $I(F,G) \le I(F',G')$. The construction can create one special subgraph, hence $|{\cal P}_2(G')| + |{\cal P}_3(G')| + |{\cal P}^*_{2m}(F',G')| \le |{\cal P}_2(G)| + |{\cal P}_3(G)| + |{\cal P}^*_{2m}(F,G)| +1$. For the graph $G'$ we have $V(G')=V(G)-4$ and it follows that $I(F,G)$ satisfies the bound in Theorem~\ref{avoid}.

The graph $G/F$ differs from $G'/F'$ by two extra loops and since $G'/F'$ is $5$-odd-edge-connected, so is $G/F$.
\medskip

{\noindent Reduction 3:}
If all the vertices $w_1$, $w_2$, $w_3$, and $w_4$ are pairwise distinct, then we delete $v_1$, $v_2$, $v_3$, and $v_4$ and either add edges $w_1w_2$ and $w_3w_4$, or $w_1w_4$ and $w_2w_3$ to create $G'$. 
One of these two choices always guarantees that $G'$ is $2$-edge-connected \cite{snarks}. In case one choice creates two new special subgraphs in $G'$, the other choice must create a $2$-edge-connected graph without any new special subgraphs. In such case we choose the latter one. 

Without loss of generality, let $w_1w_2$ and $w_3w_4$ be the edges added to create $G'$.
We extend the $2$-factor $F'$ of $G'$ to a $2$-factor $F$ of $G$. To show that $G/F$ is $5$-odd-edge-connected, we only need to check if the extension of $F'$ into $F$ does not create any new $3$-edge-cuts in $G/F$ since $G'/F'$ is $5$-odd-edge-connected.
Three cases arise depending on which of the two edges $w_1w_2,w_3w_4$ belong to $F'$. 

If $w_1w_2,w_3w_4\not\in F'$, then we set $F=F' \cup \{v_1v_2v_3v_4v_1\}$ and no new $5$-circuit is introduced. We get $G/F$ from $G'/F'$ by subdividing both of the edges corresponding to $w_1w_2,w_3w_4$ in $G'/F'$ and identifying the new vertices. Such operation does not create a new $3$-edge-cut in $G/F$.

If exactly one of the edges $w_1w_2$ and $w_3w_4$ is in $F'$, then
without loss of generality $w_1w_2 \in F'$ and $w_3w_4 \not\in F'$ and we extend $F'$ by the edges 
$\{w_1v_1,v_1v_4,v_4v_3,v_3v_2,v_2w_2\}$ and no new $5$-circuit is introduced. 
To get $G/F$ from $G'/F'$ we subdivide the edge corresponding to $w_3w_4$ in $G'/F'$, add a loop to a vertex corresponding to a circuit containing $w_1w_2$, and we identify the two vertices. This operation does not create a new $3$-edge-cut in $G/F$.

If $w_1w_2,w_3w_4\in F'$, then we extend $F'$ by the edges $v_1v_2$ and $v_3v_4$. No new $5$-circuit is introduced because $w_1w_2$ and $w_3w_4$ cannot be a part of a $3$-circuit in $F'$ as $G'/F'$ is $5$-odd-edge-connected.
We get $G/F$ from $G'/F'$ by either adding a multiple edge between two vertices, or adding two loops to one vertex. This operation does not create a new $3$-edge-cut in $G/F$. Therefore, $G/F$ is $5$-odd-edge-connected.

In all of the three cases we add no new $5$-circuit in $F$ and $I(F,G) \le I(F',G')$. The construction can create one special subgraph, hence $|{\cal P}_2(G')| + |{\cal P}_3(G')| + |{\cal P}^*_{2m}(F',G')| \le |{\cal P}_2(G)| + |{\cal P}_3(G)| + |{\cal P}^*_{2m}(F,G)| +1$. For the graph $G'$ we have $V(G')=V(G)-4$ and it follows that $I(F,G)$ satisfies the bound in Theorem~\ref{avoid}, which is a contradiction.

For each of the above reductions, we can easily check that if $G'$ is isomorphic to the Petersen graph, then $G$ fulfils Theorem~\ref{avoid}, which is a contradiction.
\end{proof}

Up to now we have shown that the only circuits shorter than $5$ contained in some smallest counterexample to Theorem~\ref{avoid} are special $2$- and $3$-circuits, which are by definition contained only in subgraphs from ${\cal P}_{2m}(G)$. As the next step we show that such graph contains no special subgraph from ${\cal P}_{2m}(G)$, and it hence has girth at least $5$.

\begin{lemma}
\label{spLemma}
If $G$ is some  smallest counterexample to Theorem~\ref{avoid}, then ${\cal P}_{2m}(G) = \emptyset$.
\end{lemma}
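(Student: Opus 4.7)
The plan is to derive a contradiction by reducing $G$ along the $2$-edge-cut attached to a putative $S \in {\cal P}_{2m}(G)$. Recall that $S$, being $P_2$ with a triangle or a $2$-circuit inserted, has $|V(S)| = 12$ and contains exactly two vertices of degree $2$, say $u_1$ and $u_2$; their external edges $e_1 = u_1 v_1$ and $e_2 = u_2 v_2$ form the desired $2$-edge-cut between $S$ and $G - V(S)$. I would construct $G'$ from $G$ by deleting $V(S)$ and joining $v_1$ and $v_2$ with a new edge $e$, falling back to the alternative bridging used in Reduction~3 whenever necessary to keep $G'$ both $2$-edge-connected and, when possible, free of newly created special subgraphs. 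The degenerate cases ($v_1 = v_2$, or $G'$ isomorphic to the Petersen graph) admit direct verification that $G$ is not a counterexample to Theorem~\ref{avoid}, exactly as in the previous lemmas.

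By the minimality of $G$ and Theorem~\ref{avoid} applied to $G'$, there exists a $2$-factor $F'$ of $G'$ such that $G'/F'$ is $5$-odd-edge-connected and the bound of the theorem holds for $G'$. I would then extend $F'$ to a $2$-factor $F$ of $G$ in one of two ways: if $e \notin F'$, I paste into $S$ a $2$-factor of $S$ that uses both $S$-edges at each of $u_1$ and $u_2$; if $e \in F'$, I put $e_1, e_2 \in F$ and add to $F$ a $u_1$-to-$u_2$ path inside $S$ together with circuits covering the remaining vertices of $V(S)$. The key observation is that the collection of $2$-factors of $S$ is highly constrained by the $2$-factor structure of the Petersen graph (every $2$-factor of which is a pair of $5$-circuits), and an explicit inspection will show that one can always select the extension so that $F$ contains at most one $5$-circuit inside $V(S)$. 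The $5$-odd-edge-connectivity of $G/F$ follows from that of $G'/F'$ by the same kind of local check across $e_1, e_2$ that appears in the previous reductions.

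For the counting, $|V(G')| = |V(G)| - 12$, the subgraph $S$ is removed from ${\cal P}_{2m}(G)$, and at most one new special subgraph can be created in $G'$. Comparing the right-hand sides of the bound of Theorem~\ref{avoid} for $G$ and for $G'$, the bound for $G$ exceeds that for $G'$ by
\[
\tfrac{12}{10} + \tfrac{2}{15}\cdot [S \in {\cal P}^*_{2m}(F,G)] - (\text{at most } \tfrac{1}{3}),
\]
which is at least $1$ whenever $F$ contains a $5$-circuit inside $S$. Since $I(F,G) - I(F',G')$ equals the number of $5$-circuits of $F$ inside $V(S)$, which is at most $1$ by the extension step, the bound for $G$ follows; a refined accounting that exploits the flexibility in the bridging to avoid new special subgraphs whenever possible yields the strict inequality needed to contradict the minimality of $G$.

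The main obstacle is the extension step itself: one must verify, over a finite list of cases indexed by the type of insertion that produced $S$ and by the $F'$-forced boundary behaviour at $e_1, e_2$, that a compatible extension with at most one $5$-circuit inside $S$ always exists and preserves $5$-odd-edge-connectivity of $G/F$. Thanks to the small and well-understood $2$-factor structure of $P_2$ and of its two ``marked'' variants, this reduces to a manageable case analysis; in particular, one can often force the inserted triangle or $2$-circuit itself into $F$ as a special circuit, leaving the remaining covering to a $2$-factor of a $P_3$-like subgraph whose few $2$-factors avoid $5$-circuits entirely.
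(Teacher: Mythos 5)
Your skeleton is the same as the paper's: reduce $S$ across its $2$-edge-cut to a single edge, apply minimality to $G'$, extend $F'$ to $F$ with at most one new $5$-circuit inside $S$, and do the coefficient count $12/10 + 2/15 - 1/3$; the arithmetic is right and matches the paper. Two points in your write-up, however, would fail if carried out literally. First, your suggestion to ``force the inserted triangle itself into $F$ as a special circuit'' is not admissible: a triangle that is a circuit of $F$ contracts to a vertex of degree $3$ in $G/F$, i.e.\ a $3$-edge-cut, destroying condition 1 of Theorem~\ref{avoid}. The extension must be chosen so that no new $2$- or $3$-circuits arise in $F$ (this is exactly the stipulation in the paper's proof), and the ``at most one $5$-circuit'' case analysis has to be done under that constraint.

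Second, your final paragraph about ``flexibility in the bridging'' and a ``strict inequality'' is both unavailable and unnecessary. Unlike Reduction~3, here $\delta(S)$ is a $2$-edge-cut, so there is exactly one way to reattach ($v_1\ne v_2$ is automatic, else $G$ has a bridge), and in the worst case (one new $5$-circuit in $S$ and one new special subgraph of type $P_2$ appearing in $G'$) the count gives exactly equality, $I(F,G)\le I(F',G')+1\le \mathrm{RHS}(G')+1=\mathrm{RHS}(G)$, with no room for strictness. Fortunately none is needed: $G$ being a counterexample means \emph{no} $2$-factor of $G$ satisfies both conclusions of Theorem~\ref{avoid}, so exhibiting an $F$ that meets the bound even with equality (and with $G/F$ $5$-odd-edge-connected) already yields the contradiction. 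With those two corrections, and with the deferred inspection of the finitely many extensions of $F'$ into $S$ actually carried out, your argument coincides with the paper's proof.
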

\begin{proof}
Let $G$ be some smallest counterexample to Theorem~\ref{avoid} and suppose that there exists a subgraph $S$ from ${\cal P}_{2m}(G)$. 
We reduce $S$ to an edge and denote the new graph by $G'$. Let $F'$ be a $2$-factor of $G'$ from Theorem~\ref{avoid}. We extend it to a $2$-factor $F$ of $G$ without creating any circuits of length $2$ and $3$ and as few $5$-circuits as possible. 
Simple case analysis shows that this is always possible without creating more than one $5$-circuit.

First, assume that no new $5$-circuit is created, hence $I(F,G) \le I(F',G')$. 
The graph $G'$ can have one special subgraph more than $G$, thus 
$|{\cal P}_2(G')| + |{\cal P}_3(G')| + |{\cal P}^*_{2m}(F',G')| \le |{\cal P}_2(G)| + |{\cal P}_3(G)| + |{\cal P}^*_{2m}(F,G)| +1$. 
For the graph $G'$ we have $V(G')=V(G)-12$.
It follows that
\begin{eqnarray*}
I(F,G) &\le& I(F',G') \\
       &\le& 1/10 \cdot |V(G')|+ 1/3 \cdot |{\cal P}_2(G')| + 1/10 \cdot |{\cal P}_3(G')| + 2/15 \cdot |{\cal P}^*_{2m}(F',G')|\\ 
       &\le& 1/10 \cdot (|V(G)| - 12) + 1/3 \cdot |{\cal P}_2(G)| + 1/10 \cdot |{\cal P}_3(G)| + 2/15 \cdot |{\cal P}^*_{2m}(F,G)| + 1/3\\
       &<& 1/10 \cdot |V(G)| + 1/3 \cdot|{\cal P}_2(G)| + 1/10 \cdot |{\cal P}_3(G)| + 2/15 \cdot |{\cal P}^*_{2m}(F,G)|.
\end{eqnarray*}

We get $G/F$ from $G'/F'$ by using some of the following operations: subdividing an edge, adding an isolated vertex, adding a $4$-tuple edge between two vertices, and adding a loop.
Therefore, if $G'/F'$ is $5$-odd-edge-connected, then also $G/F$ is $5$-odd-edge-connected.

Assume that one $5$-circuit is created in $F$. Then $I(F,G) \le I(F',G')+1$ and $V(G')=V(G)-12$.
The reduction can create one new special subgraph in $G'$, however, $G'$ does not contain the subgraph $S$ any more, as it was reduced: $S\in{\cal P}^*_{2m}(F,G)$ (there is a $5$-circuit of $F$ in $S$) and $S \not \in{\cal P}^*_{2m}(F',G')$.
Therefore,
\begin{eqnarray*}
I(F,G) &\le& I(F',G') + 1\\
       &\le& 1/10 \cdot |V(G')|+ 1/3 \cdot |{\cal P}_2(G')| + 1/10 \cdot |{\cal P}_3(G')| + 2/15 \cdot |{\cal P}^*_{2m}(F',G')|+1\\ 
       &\le& 1/10 \cdot (|V(G)| - 12) + 1/3 \cdot |{\cal P}_2(G)| + 1/10 \cdot |{\cal P}_3(G)| + 2/15 \cdot (|{\cal P}^*_{2m}(F,G)|-1) + 1/3+1\\
       &\le& 1/10 \cdot |V(G)| + 1/3 \cdot|{\cal P}_2(G)| + 1/10 \cdot |{\cal P}_3(G)| + 2/15 \cdot |{\cal P}^*_{2m}(F,G)|.
\end{eqnarray*}
It follows that $I(F,G)$ satisfies the bound in Theorem~\ref{avoid}.

As in the previous case the graph $G/F$ can be obtained from $G'/F'$ by using some of the following operations: subdividing an edge, adding an isolated vertex, adding a $4$-tuple edge between two vertices, and adding a loop.
Therefore, if $G'/F'$ is $5$-odd-edge-connected, then also $G/F$ is $5$-odd-edge-connected.
We can easily check that if $G'$ is isomorphic to the Petersen graph, then $G$ fulfils Theorem~\ref{avoid}. In any case, we get a contradiction.
\end{proof}

The next two reductions guarantee that any $2$- or $3$-edge-cut separates an uncolourable subgraph in each smallest counterexample to Theorem~\ref{avoid}.

\begin{lemma}
Each smallest counterexample to Theorem~\ref{avoid} does not contain any $2$-edge-cut separating a colourable subgraph.
\label{lemma2edgeCut2}
\end{lemma}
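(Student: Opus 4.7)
The plan is to assume for contradiction that some smallest counterexample $G$ contains a $2$-edge-cut $\{e_1,e_2\}$, with $e_1=u_1v_1$ and $e_2=u_2v_2$, separating a colourable subgraph $G_1$ (containing $u_1,u_2$) from $G_2$ (containing $v_1,v_2$). By the previous lemmas $G$ has girth at least $5$ and contains no subgraph from ${\cal P}_{2m}$, so $u_1\ne u_2$, $v_1\ne v_2$, and $|V(G_1)|$ is bounded below by a sufficiently large constant (at least $4$, in fact much more due to the girth). I reduce by taking $G'$ to be the induced subgraph on $V(G_2)$ together with the new edge $v_1v_2$; this is a smaller $2$-edge-connected cubic graph, so by minimality it admits a $2$-factor $F'$ satisfying Theorem~\ref{avoid}.

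The key step is to extend $F'$ to a $2$-factor $F$ of $G$ using the colourability of $G_1$. The cubic graph $G_1^+:=G_1+u_1u_2$ is colourable, so any $3$-edge-colouring of it yields three $2$-factors each consisting of even circuits, and we may choose such an $F_1^+$ to either contain or avoid $u_1u_2$ so as to match whether $v_1v_2\in F'$. I set $F=(F'\setminus\{v_1v_2\})\cup(F_1^+\setminus\{u_1u_2\})\cup\{e_1,e_2\}$ in the first case and $F=F'\cup F_1^+$ (with both dummy edges absent) in the second. Since all circuits of $F_1^+$ are even, the only potentially new circuit in $F$ is the merger of the $v_1v_2$-circuit of $F'$ with the $u_1u_2$-circuit of $F_1^+$, which has length at least $6$; hence no new $5$-circuit appears and $I(F,G)\le I(F',G')$.

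For the counting we have $|V(G')|=|V(G)|-|V(G_1)|$, and the reduction introduces at most one new special subgraph in $G'$, necessarily one meeting the new edge $v_1v_2$. Plugging Theorem~\ref{avoid}'s bound for $G'$ and using $|V(G_1)|\ge 4$ to absorb the extra $1/3$ term coming from that special subgraph yields the bound for $G$ with strict inequality. Moreover, $G/F$ arises from $G'/F'$ by expanding the element corresponding to $v_1v_2$ into the contraction of $G_1$ by $F_1^+\setminus\{u_1u_2\}$; this expansion is compatible with the chosen status of $u_1u_2$ in $F_1^+$ and so preserves $5$-odd-edge-connectivity. The main obstacle I anticipate is the careful boundary-case bookkeeping: $G'$ or $G_1^+$ might be isomorphic to the Petersen graph (handled by direct verification, as in Lemmas~\ref{triangleLemma} and \ref{spLemma}); $v_1$ and $v_2$ might already be adjacent in $G_2$, so $G'$ has to be allowed to be a multigraph; and one has to confirm that no element of ${\cal P}^*_{2m}(F,G)$ can arise on the $G_1$ side, which holds precisely because all circuits of $F_1^+$ restricted to $G_1$ are even.
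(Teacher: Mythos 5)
Your reduction ($G'=G_2+v_1v_2$, extend $F'$ using a $2$-factor of the completed colourable side $G_1^+=G_1+u_1u_2$ chosen to agree with the status of the dummy edge) is essentially the paper's construction, but there is a genuine gap in the step ``no new $5$-circuit appears.'' Evenness of the circuits of $F_1^+$ does \emph{not} give the claimed length bound of $6$ for the merged circuit. In your first case the merged circuit consists of $e_1$, $e_2$, the path of length $\ell_2-1$ obtained from the $F'$-circuit through the dummy $v_1v_2$, and the path of length $\ell_1-1$ obtained from the $F_1^+$-circuit through the dummy $u_1u_2$; its length is $\ell_1+\ell_2$ with $\ell_1\ge 2$ even and $\ell_2\ge 2$, so it can equal $5$. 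Concretely, if $u_1u_2\in E(G)$ (so $G_1^+$ has a parallel pair) and the chosen $F_1^+$ contains both parallel copies, while the $F'$-circuit through $v_1v_2$ is a triangle $v_1xv_2$ in $G'$ (a common neighbour $x$ of $v_1,v_2$ in $G_2$ is not excluded by girth $5$ of $G$, nor does Theorem~\ref{avoid} forbid triangles in $F'$), then $F$ contains the new $5$-circuit $u_1v_1xv_2u_2u_1$. This breaks $I(F,G)\le I(F',G')$, and the slack in your counting (only $|V(G_1)|\ge 4$, i.e.\ a gain of $4/10$ against the needed $1+1/3$) cannot absorb an extra $5$-circuit, so the case must be prevented, not tolerated. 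This is exactly why the paper normalizes the colouring of $G_1$: the dummy edge between the two attachment vertices gets colour $1$ and the parallel \emph{real} edge, if present, gets colour $3$, so the real edge is never put into $F$; in your language, when $u_1u_2\in E(G)$ you must pick the $2$-factor of $G_1^+$ that contains the dummy copy but not the real copy (such a choice always exists among the three colour-class $2$-factors). With that one extra stipulation your parity argument (odd path on the $G_1$ side of length at least $3$) does close the case.

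A secondary weakness: you assert that the expansion of $G'/F'$ into $G/F$ ``preserves $5$-odd-edge-connectivity,'' but this is not automatic and the paper proves it, in the case where the dummy is absent from $F'$, by a genuine argument (any $3$-edge-cut of $G/F$ would induce a $3$-edge-cut of $G_2/F'$ or of the Eulerian graph $G_1/(F-F')$, both impossible). You should supply this verification rather than cite compatibility; as stated it is an unproved claim, though the statement itself is correct.
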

\begin{proof}
Let $G$ be one of the smallest counterexamples to Theorem~\ref{avoid} and suppose that there exists a $2$-edge-cut separating a colourable subgraph in $G$.
Let $v_1v_2$ and $w_1w_2$ be the cut-edges such that
$v_1$ and $w_1$ are in the colourable component $H$ of 
$G-\{v_1v_2,w_1w_2\}$. Clearly, $v_1$, $v_2$, $w_1$, and $w_2$ are all distinct 
otherwise $G$ has a bridge.

We create two components $G_1$ and $G_2$ by deleting the edges $v_1v_2$ and $w_1w_2$ in $G$ and adding two new edges $v_1w_1$ and $v_2w_2$.
Let $G_1$ be the component containing $v_1$ (the colourable one).
We fix a proper colouring $c$ of $G_1$ with colours $\{1, 2, 3\}$ such that one edge between vertices $v_1$ and $w_1$ gets the colour $1$ and the other one (if there is such edge) gets the colour $3$.
The graph $G_2$ has a $2$-factor $F'$ satisfying Theorem~\ref{avoid} and we extend it to a $2$-factor $F$ of $G$ as follows. 

If $v_2w_2\not\in F'$, then we extend $F'$ by the
edges of $G_1$ coloured by $2$ and $3$ and no new $5$-circuits are created in $F$.
We have to show that $G/F$ is $5$-odd-edge-connected. 
We know that the graph $G_2/F'$ is $5$-odd-edge-connected and that $G_1/(F-F')$ is Eulerian.
We show that $G/F$ has no $3$-edge-cut. Assume for the contrary that there is a $3$-edge-cut in
$G/F$ that separates $G/F$ into two components $A$ and $B$. None of these subgraphs can be completely within
$G_2/F'$ as this would imply a $3$-edge-cut in $G_2/F'$. Similarly,
None of these subgraphs can be completely within
$G_1/(F-F')$ as this would imply a $3$-edge-cut in $G_1/(F-F')$ which is, again, not possible because 
$G_1/(F-F')$ is Eulerian. Therefore, both $A$ and $B$ contain vertices both from  $G_2/F'$ and $G_1/(F-F')$.
Without loss of generality let $v_1v_2\in A$ and $w_1w_2\in B$. From the three cut-edges separating $A$ from $B$
either one or two are in $G_1/(F-F')$. However, none of these two cases is possible: 
if one cut-edge is in $G_1/(F-F')$, then the remaining two edges together with $w_1w_2$ form a $3$-edge-cut of $G_2/F'$
and
if two cut-edges separating $A$ from $B$ are in $G_1/(F-F')$, then these remaining two edges 
together with $v_1v_2$ form a $3$-edge-cut of $G_1/(F-F')$. Therefore, $G/F$ is $5$-odd-edge-connected.

If $v_2w_2\in F'$, then we put the following edges to $F$:
edges from $F'$, 
edges coloured by $1$ and $2$ in $G_1$, and 
the edges $v_1v_2$ and $w_1w_2$. 
The circuits of $F$ inside $G_1$ are even and since $v_1$ and $w_1$ are not neighbours on a circuit in $F$ 
(either there is no edge $v_1w_1$, or it is coloured by $3$ and hence it is not part of $F$). 
The only new $5$-circuit that can be created is
$vv_1v_2w_2w_1vv$ where $v$ is a common neighbour of $v_1$ and $w_1$. However, this is not possible, because one of the edges $v_1v$ and  $w_1v$ is coloured by $2$ and the other one by $3$ in $G_1$. Hence, no new $5$-circuits, besides the $5$-circuits of $F'$, are introduced in $F$.
No new $3$-edge-cut can be created in $G/F$
as $G/F$ is created from $G_2/F'$ and an Eulerian graph by identification of two vertices.

As no new $5$-circuits are created in $F$, we have $I(F,G) \le I(F',G_2)$. 
The graph $G_2$ can have one special subgraph more than $G$, thus 
$|{\cal P}_2(G_2)| + |{\cal P}_3(G_2)| + |{\cal P}^*_{2m}(F',G_2)| \le |{\cal P}_2(G)| + |{\cal P}_3(G)| + |{\cal P}^*_{2m}(F,G)| +~1$.
For the graph $G'$ we have $V(G')\le V(G)-4$ and it follows that $I(F,G)$ satisfies the bound in Theorem~\ref{avoid}, which is a contradiction.
We can easily check that if $G_2$ is isomorphic to the Petersen graph, then $G$ fulfils Theorem~\ref{avoid}.
\end{proof}

\begin{lemma}
Each smallest counterexample to Theorem~\ref{avoid} does not contain any non-trivial $3$-edge-cut separating a colourable subgraph.
\label{lemma3edgeCut2}
\end{lemma}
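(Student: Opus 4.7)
The plan is to mirror Lemma~\ref{lemma2edgeCut2} and reduce via contraction across the $3$-edge-cut. Suppose a smallest counterexample $G$ to Theorem~\ref{avoid} contains a non-trivial $3$-edge-cut $\{v_1w_1,v_2w_2,v_3w_3\}$ separating a colourable subgraph $H$ that contains $v_1,v_2,v_3$. From the earlier reductions (Lemmas~\ref{triangleLemma}--\ref{spLemma} and the lemma on $4$-circuits), $G$ has girth at least $5$ and ${\cal P}_{2m}(G)=\emptyset$, so $H$ cannot be a triangle on $\{v_1,v_2,v_3\}$ and thus $|V(H)|\ge 5$. I contract $H$ into a single vertex $v$ to obtain a smaller $2$-edge-connected cubic graph $G_2$; if $G_2$ is the Petersen graph the conclusion follows by direct inspection of $G$, and otherwise $G_2$ has a $2$-factor $F'$ from Theorem~\ref{avoid}, with (after relabelling) $vw_1,vw_2\in F'$ and $vw_3\notin F'$.

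Let $H^+$ be the cubic graph formed by re-attaching $v$ to $H$. Colourability of $H$ gives a proper $3$-edge-colouring of $H^+$; permuting colours so that $vv_3$ receives colour $3$ and letting $M_1,M_2$ denote the other two classes, I set
$$F=\bigl(F'\setminus\{vw_1,vw_2\}\bigr)\cup\{v_1w_1,v_2w_2\}\cup\bigl((M_1\cup M_2)\cap E(H)\bigr),$$
which is a $2$-factor of $G$ by a routine degree check. The critical observation is that $F$ introduces no new $5$-circuits: circuits of $F$ contained entirely in $H$ are subcircuits of $M_1\cup M_2$ and therefore have even length, while the former $F'$-circuit through $v$ gets spliced across $H$ into a circuit of length $|C'|+|C|-2$, where $|C'|$ is its length in $F'$ and $|C|$ is the (even, $\ge 4$) length of the circuit of $M_1\cup M_2$ through $v$ in $H^+$. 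Since $|C'|\ne 3$ by $5$-odd-edge-connectivity of $G_2/F'$, this spliced length is never $5$. Hence $I(F,G)\le I(F',G_2)$; combined with $|V(G_2)|\le|V(G)|-4$ and the fact that the contraction creates at most one new special subgraph of $G_2$ centred at $v$, the weighted bookkeeping from Lemma~\ref{lemma2edgeCut2} goes through and yields the desired contradiction.

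Finally, $5$-odd-edge-connectivity of $G/F$ should follow exactly as in Lemma~\ref{lemma2edgeCut2}: $G/F$ is obtained by gluing $G_2/F'$ to the quotient of $H$ by $F\cap E(H)$ at the vertex representing the $F'$-circuit through $v$, with the edge $v_3w_3$ hanging externally; any hypothetical $3$-edge-cut of $G/F$ must restrict to a $3$-edge-cut of either $G_2/F'$ or of the $H$-quotient, neither of which is possible. The main obstacle I anticipate is tracking whether $v_3$ sits on the $v_1$--$v_2$ path through $H$ or on a separate circuit of $M_1\cup M_2$, because the two cases put the edge $v_3w_3$ in structurally different positions in $G/F$ and require slightly different parity arguments; this case split is routine but needs to be carried out in direct analogy with the $2$-edge-cut analysis above.
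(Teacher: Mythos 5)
Your reduction is essentially the paper's own: split $G$ along the $3$-edge-cut, complete the colourable side to a cubic graph (your $H^+$, the paper's $G_1$ with the extra vertex $y_1$) and the other side to $G_2$ (your contraction of $H$ to $v$ is equivalent to the paper's added vertex $y_2$), apply Theorem~\ref{avoid} to $G_2$, extend $F'$ by the two colour classes missing the colour of $vv_3$, exclude new $5$-circuits via the evenness of the $H$-side circuits together with the absence of triangles in $F'$ (the paper phrases this as ruling out the circuit $vv_1v_2w_2w_1v$), and run the same bookkeeping with $|V(G_2)|\le|V(G)|-4$ and at most one new special subgraph. The only difference is cosmetic: you check $5$-odd-edge-connectivity of $G/F$ by the cut-restriction argument of Lemma~\ref{lemma2edgeCut2} (which does go through, provided the edge $v_3w_3$ is assigned to whichever side it crosses and the Eulerian parity of the $H$-quotient is used, including for cuts of size one), whereas the paper instead describes $G/F$ constructively as $G_2/F'$ with one edge subdivided and disjoint circuits added.
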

\begin{proof}
Let $G$ be some smallest counterexample to Theorem~\ref{avoid}.
We can assume by the above lemmas that $G$ has girth at least $5$ and no $2$-edge-cut separates a colourable subgraph in $G$.
Suppose that there exists a $3$-edge-cut separating a colourable subgraph in $G$. We choose a non-trivial $3$-edge-cut that separates the smallest colourable subgraph. 
Let $v_1v_2$, $w_1w_2$, $x_1x_2$ be the cut-edges such that the vertices $v_1$, $w_1$, and $x_1$ are in the colourable subgraph. The vertices $v_1$, $v_2$, $w_1$, $w_2$, $x_1$, and $x_2$ are pairwise distinct otherwise there is a $2$-edge-cut in the graph.
Moreover, no two vertices of $v_1$, $w_1$, and $x_1$ are neighbours as this either contradicts the choice of
the $3$-edge-cut or the fact that $G$ has no triangles.

We create two components $G_1$ and $G_2$ by adding two new vertices $y_1$ and $y_2$, deleting the edges $v_1v_2$, $w_1w_2$, and $x_1x_2$
and adding new edges $v_1y_1$, $w_1y_1$, $x_1y_1$, $y_2v_2$, $y_2w_2$, and $y_2x_2$.
Let $F'$ be a $2$-factor of $G_2$ that satisfies Theorem~\ref{avoid} and let $c$ be a $3$-edge-colouring of $G_1$. We can easily extend $F'$ to a $2$-factor $F$ of $G$. Without loss of generality, suppose that $y_2v_2$ and $y_2w_2$ are contained in $F'$. Then we add all edges in $G_1$ coloured by $c(y_1v_1)$ and $c(y_1w_1)$ to $F$. The circuits of $F$ inside $G_1$ are even. The only new $5$-circuit that can be created is $vv_1v_2w_2w_1v$ where $v$ is a common neighbour of $v_1$ and $w_1$. This is not possible, because $G_2$ does not contain a triangle in $F'$. Hence, no new $5$-circuits are created in $F$ and $I(F,G) \le I(F',G_2)$.
The graph $G_2$ can have one special subgraph more than $G$, thus 
$|{\cal P}_2(G_2)| + |{\cal P}_3(G_2)| + |{\cal P}^*_{2m}(F',G_2)| \le |{\cal P}_2(G)| + |{\cal P}_3(G)| + |{\cal P}^*_{2m}(F,G)| +~1$. For the graph $G'$ we have $V(G')\le V(G)-5$ and it follows that $I(F,G)$ satisfies the bound in Theorem~\ref{avoid}.

We can obtain the graph $G/F$ from $G_2/F'$ as follows. 
Let $e_1$ be the edge of $G_1/(F-F')$ that corresponds to $x_1y_1$.
Let $e_2$ be the edge of $G_2/F'$ that corresponds to $x_2y_2$.
Let $e$ be the edge of $G/F$ that corresponds to $x_1x_2$.
Let $c$ we a vertex of $G/F$ corresponding to circuit of $F$ that contains edges $v_1v_2$
and $w_1w_2$.
The graph $G_1/(F-F')$ is Eulerian and it can be decomposed into circuits. 
Let $C$ be a circuit of $G_1/(F-F')$ that contains $e_1$.
In $G/F$, the circuit $C$ transforms to a path starting at $c$ and ending in $e$.
The edge $e_2$ transforms in $G/F$ into $e$.
Thus we can add edges from $C$ to $G_2/F'$ by subdividing $e_2$
with vertices of degree $2$. The remaining circuits of $G_1/(F-F')$ are preserved in 
$G/F$, and we may add them to $G_2/F'$ directly. In the process we only subdivide edges 
with vertices of degree $2$ and add new disjoint circuits.
Thus no new $3$-edge-cut can be created.
We can easily check that if $G_2$ is isomorphic to the Petersen graph, then $G$ fulfils Theorem~\ref{avoid}.
\end{proof}

\subsection{Proof of Theorem~\ref{avoid}}

Assume that $G$ is some smallest counterexample to Theorem~\ref{avoid}. According to Lemmas~\ref{triangleLemma}-\ref{lemma3edgeCut2}, $G$ has girth at least $5$, every $2$- and $3$-edge-cut separates an uncolourable subgraph.
Recall that under these conditions ${\cal P} = {\cal P}_2 \cup {\cal P}_3$.

Let ${\cal C}_5(G)$, or ${\cal C}_5$ if no confusion can occur, be the set of $5$-circuits of $G$ that do not intersect any subgraph from ${\cal P}$. 
Let $M$ be a perfect matching of $G$, let $F_M$ be the complementary $2$-factor, and let $I(F_M,G)$ be the number of $5$-circuits in $F_M$.

For each $C\in{\cal C}_5$ we define $I(C,M)$ as follows:
$I(C,M)=1$ if $C\in F_M$ and $I(C,M)=0$ otherwise.
For each $S\in {\cal P}$ and each $C\in F_M$ we define $I(C,S,M)$ as follows:
$I(C,S,M)=1$ if $C$ is a $5$-circuit that intersects $S$ 
and $I(C,S,M)=0$ otherwise.
Note that a $5$-circuit cannot intersect two subgraphs from ${\cal P}$.
Let $I(S,M)=\sum_{C\in F_M} I(C,S,M)$. By definition, we can express $I(F_M,G)$ as
\begin{eqnarray}
I(F_M,G)=\sum_{C\in {\cal C}_5} I(C,M) + \sum_{S\in{\cal P}_2} I(S,M) + \sum_{S\in{\cal P}_3} I(S,M). 
\label{eqn1}
\end{eqnarray}

For each $S\in{\cal P}_2$, let $e_S$ be one arbitrary edge from $\delta(S)$.
We define a linear function on ${\cal M}(G)$ as
\begin{eqnarray}
f({\bf p})=\left( 1/4 \cdot  \sum_{C \in {\cal C}_5} \sum_{e\in \delta(C)} p_e  \right) + \sum_{S\in {\cal P}_2} p_{e_S}.
\label{eqn2}
\end{eqnarray}

\begin{lemma}
Let $G$ be some smallest counterexample to Theorem~\ref{avoid}.
Then $G$ has a $2$-factor $F_M$ such that 
\begin{enumerate}
\item $G/F_M$ is $5$-odd-edge-connected,
\item $I(F_M,G) \le  1/6 \cdot |{\cal C}_5|  +   4/3 \cdot |{\cal P}_2| + |{\cal P}_3|$.
\end{enumerate}
\label{lemmaIFMG}
\end{lemma}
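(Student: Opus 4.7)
The plan is to minimise the linear function $f$ defined by~(\ref{eqn2}) over the perfect matching polytope $\mathcal{M}(G)$. Since the minimum of a linear function over a polytope is attained at a vertex and the vertices of $\mathcal{M}(G)$ are exactly the characteristic vectors of perfect matchings, there is a perfect matching $M$ of $G$ with $f(M)\le f(\mathbf{p})$ for every $\mathbf{p}\in\mathcal{M}(G)$. Because $(1/3,\ldots,1/3)\in\mathcal{M}(G)$ for the bridgeless cubic graph $G$, evaluating $f$ at this central point gives
\[
f(M)\le \tfrac{5}{12}|\mathcal{C}_5|+\tfrac{1}{3}|\mathcal{P}_2|.
\]

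I will then bound each of the three sums on the right of~(\ref{eqn1}) separately. For $C\in\mathcal{C}_5$ the quantity $\sigma_C:=\sum_{e\in\delta(C)} M_e$ is odd and belongs to $\{1,3,5\}$, with $\sigma_C=5$ exactly when $C\in F_M$; hence $I(C,M)\le (\sigma_C-1)/4$, and summing and using the definition of $f$ gives
\[
\sum_{C\in\mathcal{C}_5} I(C,M)\le f(M)-\sum_{S\in\mathcal{P}_2} M_{e_S}-\tfrac{1}{4}|\mathcal{C}_5|.
\]
For $S\in\mathcal{P}_3$ one has $|M\cap\delta(S)|\in\{1,3\}$, and a case analysis on these two values, together with the fact that any two degree-$2$-in-$S$ vertices of $P_3$ are at distance at least $3$ in $S$ (because the Petersen graph has girth~$5$), shows that any circuit of $F_M$ straddling $S$ has length at least~$6$; consequently at most one $5$-circuit of $F_M$ can meet $S$ and $I(S,M)\le 1$. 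For $S\in\mathcal{P}_2$ the analogous split $|M\cap\delta(S)|\in\{0,2\}$, combined with the fact that the two degree-$2$-in-$S$ vertices are at distance~$4$ in $P_2$, yields $I(S,M)\le M_{e_S}+1$: in the first case $F_M\cap E(S)$ is a $2$-factor of $P_2$ (either a Hamilton $10$-cycle or a pair of disjoint $5$-cycles), and in the second it is a path of length $\ge 4$ between the two degree-$2$-in-$S$ vertices plus at most one additional $5$-cycle.

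Adding these three bounds and cancelling the $\sum_{S\in\mathcal{P}_2} M_{e_S}$ terms telescopes to
\[
I(F_M,G)\le f(M)-\tfrac{1}{4}|\mathcal{C}_5|+|\mathcal{P}_2|+|\mathcal{P}_3|\le \tfrac{1}{6}|\mathcal{C}_5|+\tfrac{4}{3}|\mathcal{P}_2|+|\mathcal{P}_3|,
\]
which is the desired bound on $I(F_M,G)$.

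The hard part, which I expect to be the main obstacle, is to ensure the first conclusion, namely that $G/F_M$ is $5$-odd-edge-connected. A violation corresponds to an odd edge cut of size at most~$3$ in $G/F_M$, i.e.\ to a $2$- or $3$-edge-cut of $G$ whose smaller side is a union of an odd number of whole circuits of $F_M$. Under the structural assumptions established in Lemmas~\ref{triangleLemma}--\ref{lemma3edgeCut2} (girth at least~$5$, no small cut separating a colourable subgraph, $\mathcal{P}_{2m}(G)=\emptyset$), the only obstructions are: a subgraph $S\in\mathcal{P}_2$ whose restriction of $F_M$ is a single Hamilton $10$-cycle, a subgraph $S\in\mathcal{P}_3$ with $|M\cap\delta(S)|=3$ (forcing a single $9$-cycle inside), or a non-trivial $3$-edge-cut bounding some other uncolourable subgraph. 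In each case I plan to replace the restriction of $M$ inside the offending subgraph with a different local perfect matching chosen to flip the parity of the number of inside circuits; verifying that such a swap always exists, preserves the value of $f(M)$, and does not invalidate the bounds on $I(S,M)$ derived above is the delicate step that closes the argument.
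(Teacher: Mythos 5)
Your part 2 is correct and is essentially the paper's computation: the same evaluation $f(M)\le f(1/3,\ldots,1/3)= 5/12\cdot|{\cal C}_5|+1/3\cdot|{\cal P}_2|$, the same estimate $I(C,M)\le -1/4+1/4\sum_{e\in\delta(C)}p_e$ for $C\in{\cal C}_5$, and the same bounds $I(S,M)\le p_{e_S}+1$ for $S\in{\cal P}_2$ and $I(S,M)\le 1$ for $S\in{\cal P}_3$, combined through (\ref{eqn1}) and (\ref{eqn2}). The genuine gap is part 1. You take $M$ to be an $f$-minimiser over the whole polytope ${\cal M}(G)$. Such a matching may perfectly well contain all three edges of some (non-trivial) $3$-edge-cut of $G$ --- the structural lemmas only guarantee that both sides of such a cut are uncolourable, they do not forbid this --- and then $G/F_M$ has a $3$-edge-cut and is not $5$-odd-edge-connected. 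You recognise this and propose to repair $M$ by local swaps inside offending subgraphs, but that step is only a sketch: the claimed short list of obstructions is not justified, and you do not show that a swap exists which removes the bad cut without increasing $f(M)$ or creating new $5$-circuits. (Also note that ``odd edge cut'' here means an edge cut of odd cardinality, so the violations to exclude are cuts of size $1$ or $3$ in $G/F_M$, i.e.\ edge cuts of $G$ of that size consisting entirely of edges of $M$; it has nothing to do with the parity of the number of circuits on one side, and size-$2$ cuts are irrelevant.)

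The paper avoids this difficulty by a different, much simpler choice of $M$: write $(1/3,\ldots,1/3)$ as a convex combination of perfect matchings. Since every $3$-edge-cut of $G$ carries total weight exactly $1$ at this point and any perfect matching meets a $3$-edge-cut in one or three edges, every matching occurring in this combination meets every $3$-edge-cut in exactly one edge. Now take $M$ to be $f$-minimal among the matchings of this combination (not over all of ${\cal M}(G)$); by linearity of $f$ one still has $f(M)\le f(1/3,\ldots,1/3)$, so the whole numerical argument of part 2 goes through unchanged, while the cut property of $M$ immediately excludes edge cuts of size $1$ or $3$ in $G/F_M$, giving $5$-odd-edge-connectivity for free. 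Without restricting the minimisation in this way, your argument cannot conclude, so as written the proposal does not prove the lemma.
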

\begin{proof}
Any perfect matching of a cubic bridgeless graph intersects each $3$-edge-cut of $G$ in exactly one or three edges, therefore, the sum of weights of edges in a $3$-edge-cut is either $1$ or $3$
in each perfect matching.
The point ${\bf p_0} =(1/3, 1/3, 1/3, \dots, 1/3 )$ lies in ${\cal M}(G)$. 
Therefore, the point ${\bf p_0}$ is a convex combination of some perfect matchings. 
As in each $3$-edge-cut the sum of weights of ${\bf p_0}$ is $1$, the sum must be $1$
for each perfect matching in the convex combination. 
From such perfect matchings we choose $M$ for which $f(M)$ is minimum, that is $f(M)\le f({\bf p})$ for all ${\bf p} \in {\cal M}(G)$. Since $M$ intersects every $3$-edge-cut in exactly one edge, the graph $G/F_M$ is $5$-odd-edge-connected, and hence the first part of the lemma holds.

We now prove the second part of the lemma. Since $M$ is a matching in a convex combination of matchings yielding $(1/3,1/3,\ldots,1/3)$, and among those $M$ is a perfect matching such that $f(M)$ is minimal, we have $f(M) \le f(1/3,1/3,\ldots,1/3)$ and thus
\begin{eqnarray}
f(M) \le  5/12 \cdot  |{\cal C}_5|  + 1/3 \cdot |{\cal P}_2|.
\label{eqn3}
\end{eqnarray}
Let ${\bf p}$ be the characteristic vector of $M$.
First let us consider the value $I(C,M)$ for a circuit $C\in {\cal C}_5$. 
If $C\in F_M$, then all the edges from $\delta(C)$ belong to $M$, that is $\sum_{e\in \delta(C)} p_e=5$, and by the definition of $I$, we have $I(C,M)=1$. 
If $C\not\in F_M$, then $\sum_{e\in \delta(C)} p_e\ge 1$ as $C$ has an odd number of vertices
and by definition $I(C,M)=0$. We can bound $I(C,M)$ by 
\begin{eqnarray}
I(C,M)\le -1/4+1/4 \cdot \sum_{e\in \delta(C)} p_e.
\label{eqn4}
\end{eqnarray}
Summing (\ref{eqn4}) over all circuits $C\in {\cal C}_5$ we get
$$
\sum_{C\in {\cal C}_5} I(C,M) \le 
\left(1/4 \cdot \sum_{C \in {\cal C}_5}   \sum_{e\in \delta(C)} p_e \right)- 1/4 \cdot |{\cal C}_5|.
$$

Consider the value of $I(S,M)$. The circuits of each $2$-factor intersecting $S$
can be extended to a $2$-factor of the Petersen graph, which consists of precisely 
two $5$-circuits. Thus we can calculate the number of $5$-circuits intersecting $S$ when the value $p_{e_S}$ is known.
Assume that $S \in {\cal P}_2$. If $p_{e_S}=1$, then both edges from $\delta(S)$ belong to $M$, which implies two $5$-circuits in $S\cap F_M$ and we have $I(S,M)=2$. If $p_{e_S}=0$, then $I(S,M)=1$. In both cases we have $I(S,M) \le p_{e_S} + 1$ (note that equality holds here but we do not need it) 
and thus
$$
\sum_{S\in{\cal P}_2} I(S,M) \le \left(\sum_{S\in {\cal P}_2} p_{e_S}\right)+|{\cal P}_2|.
$$
Assume that $S \in {\cal P}_{3}$. There is at most one $5$-circuit from $F_M$ intersecting $S$, otherwise $S$ could be extended to a subgraph isomorphic to $S_2$, which is not possible by the definition. Therefore, $I(S,M) \leq 1$ and
$$
\sum_{S\in{\cal P}_3} I(S,M)\le |{\cal P}_3|.
$$
Combining (\ref{eqn1})-(\ref{eqn3}) and the three inequalities bounding the summands of $I(F_M,G)$, we get that
\[
I(F_M,G) \leq f({\bf p}) -1/4 \cdot |{\cal C}_5| + |{\cal P}_2| + |{\cal P}_3| \leq 1/6 \cdot |{\cal C}_5|  + 4/3 \cdot |{\cal P}_2| +|{\cal P}_3|.\qedhere 
\]
\end{proof}

Let $V_{P_2}$ and $V_{P_3}$ be the vertices of subgraphs from ${\cal P}_2$, ${\cal P}_3$, respectively. 
Our next aim is to bound the number of vertices of $G$ outside special subgraphs in terms of the number of $5$-circuits in $G$. The lemma corresponds to Lemma 11 in \cite{CL}. As the proof of the lemma is almost identical, we only outline the main ideas.
\begin{lemma}
Let $G$ be some smallest counterexample to Theorem~\ref{avoid}.
Then $|V(G)-V_{P_2}-V_{P_3}| \ge 5/3 \cdot |{\cal C}_5|$.
\label{lemmaVert}
\end{lemma}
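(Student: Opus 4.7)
The plan is to prove $|V(G)-V_{P_2}-V_{P_3}| \ge 5/3 \cdot |{\cal C}_5|$ by a double-counting argument on incidences $(v, C)$, where $v \in V(G) \setminus (V_{P_2} \cup V_{P_3})$ and $C \in {\cal C}_5$ contains $v$. By the very definition of ${\cal C}_5$ no such $C$ meets any subgraph in ${\cal P}$; since Lemma~\ref{spLemma} ensures ${\cal P}_{2m} = \emptyset$ in a smallest counterexample, every vertex of every $C \in {\cal C}_5$ lies outside $V_{P_2}\cup V_{P_3}$. Hence the total number of incidences is exactly $5 |{\cal C}_5|$.

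The crux is therefore the following per-vertex bound: every vertex $v \notin V_{P_2}\cup V_{P_3}$ belongs to at most three circuits of ${\cal C}_5$. Granting this, summing the incidences by vertex yields $5 |{\cal C}_5| \le 3 \cdot |V(G) \setminus (V_{P_2}\cup V_{P_3})|$, which is the claimed inequality. I would prove the per-vertex bound by contradiction: assume some $v \notin V_{P_2}\cup V_{P_3}$ lies in at least four $5$-circuits of ${\cal C}_5$. Because $v$ is cubic, each such circuit uses exactly one of the three pairs of edges at $v$, so by the pigeonhole principle some pair of edges at $v$ is used by two distinct circuits of ${\cal C}_5$.

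Exploring the second neighbourhood of $v$ along these circuits, while using that $G$ has girth at least $5$ by the preceding reduction lemmas, produces a highly constrained local configuration. A finite case analysis, closely parallel to the one carried out for Lemma~11 of \cite{CL}, on how the $5$-circuits through $v$ share further vertices then builds up a subgraph isomorphic to $P_2$ or $P_3$ containing $v$. The exclusion of non-trivial $2$- and $3$-edge-cuts that separate a colourable subgraph (Lemmas~\ref{lemma2edgeCut2} and~\ref{lemma3edgeCut2}) guarantees that the corresponding $2$- or $3$-edge-cut actually isolates an uncolourable component, so that this subgraph belongs to ${\cal P}_2 \cup {\cal P}_3$, contradicting $v \notin V_{P_2}\cup V_{P_3}$.

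The main obstacle will be the case analysis itself: one must enumerate the ways in which four or more $5$-circuits can overlap at a single vertex of a girth-$5$ cubic graph, rule out configurations that would either produce a shorter circuit or force a special subgraph disjoint from $v$, and in each remaining case verify both the Petersen-minus-vertex/edge isomorphism and that the attaching cut has the required form. This is precisely the bookkeeping step where the present lemma diverges least from Lemma~11 of \cite{CL}, which is why only the main ideas need to be outlined rather than redone in detail.
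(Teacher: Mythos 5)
Your reduction of the lemma to a per-vertex bound is where the argument breaks: the claim that every vertex outside $V_{P_2}\cup V_{P_3}$ lies on at most three circuits of ${\cal C}_5$ is false, and it is not what the paper proves. At a cubic vertex $v$ there are only three pairs of edges, but two distinct $5$-circuits may use the \emph{same} pair at $v$: they then share the $2$-path through $v$ and their union contains a $6$-circuit, which is perfectly compatible with girth $5$, with every $2$- or $3$-edge-cut separating uncolourable parts, and with the absence of any subgraph from ${\cal P}_2\cup{\cal P}_3$ near $v$. The paper's own proof exhibits exactly these situations: the configurations $S_{242}$, $S_{323a}$, $S_{323b}$ of Figure~\ref{fig2} are possible surroundings of a vertex lying on \emph{four} $5$-circuits in a smallest counterexample, and no Petersen-derived special subgraph is forced. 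So the contradiction you plan to extract from ``$v$ on four $5$-circuits $\Rightarrow$ a $P_2$ or $P_3$ through $v$'' cannot be established; Lemmas~\ref{lemma2edgeCut2} and~\ref{lemma3edgeCut2} only constrain cuts, they do not convert such a local configuration into a member of ${\cal P}(G)$.

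The actual argument is an amortized count rather than a uniform one. The paper partitions the vertices outside ${\cal P}$ into $V_1,V_2,V_3,V_4$ according to the number of $5$-circuits through them (a case analysis from \cite{CL} disposes of vertices on five or more, apart from small graphs handled directly), and then proves the charging inequality $|V_4|\le |V_1\cup V_2|$ by assigning to each $v\in V_4$ two incidences $(v',C)$ with $v'\in V_1\cup V_2$, $v'\in C$, injectively over all of $V_4$; this is where the configurations of Figure~\ref{fig2} are used. With that inequality, $4|V_4|+3|V_3|+2|V_2|+|V_1|\le 3n$, while the incidence count is at least $5|{\cal C}_5|$ (here your observation that circuits of ${\cal C}_5$ avoid ${\cal P}$ entirely, so each contributes five incidences, is correct and matches the paper). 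If you want to salvage your write-up, you must replace the per-vertex bound of three by this compensation argument; as stated, your crux step is simply not true.
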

\begin{proof}
Recall that the graph $G$ has girth at least $5$, every $2$- and $3$-edge-cut in $G$ separates 
two uncolourable subgraphs of $G$, and ${\cal P} = {\cal P}_2 \cup {\cal P}_3$.

For $2\le k\le 4$ let $V_k$ be the set of vertices outside ${\cal P}$ that are in exactly $k$ $5$-circuits, and let $V_1$ be the set of vertices outside ${\cal P}$ that are in at most one $5$-circuit. 
Let $n = |V(G)-V_{P_2}-V_{P_3}|$. We determine the number of pairs $(v,C)$ where $C$ is a $5$-circuit and $v$ is a vertex of $C$ outside ${\cal P}$. The number of pairs is clearly at most $4|V_4|+3|V_3|+2|V_2|+|V_1|$.
We will show later that $|V_4| \le |V_2 \cup V_1|$. 
Using this inequality,
$4|V_4|+3|V_3|+2|V_2|+|V_1|\leq 3(|V_4|+|V_3|+|V_2|+|V_1|) - |V_1| \le 3n$.
On the other hand, we have exactly $5 \cdot |{\cal C}_5|$ such pairs. Therefore, $3 n \ge 5|{\cal C}_5|$, and $n \ge 5/3|{\cal C}_5|$.

Now it is sufficient to prove that $|V_4| \le |V_2 \cup V_1|$. We do this by finding an injective function
from $V_4$ to the set $\{(v',C) \ : \ v'\in V_2 \cup V_1, v' \in C, C \in {\cal C}_5\}$.
Since there are at most two possible circuits $C$ for each vertex $v'$, the existence of this function implies the desired inequality. 

Let $v\in V_4$. By case analysis one can show that except for several small graphs up to sixteen vertices, for which the lemma is true,
there are up to symmetries only three possible neighbourhoods of $v$: $S_{242}$, $S_{323a}$, and $S_{323b}$
(see Figure~\ref{fig2}). 
\begin{figure}[htp]
\center
\includegraphics{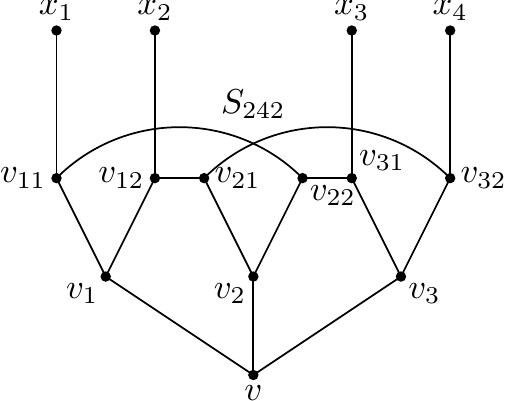} \\
\includegraphics{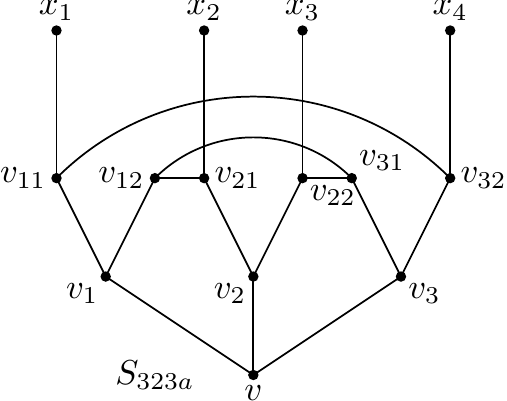} \ \ \ \ \ \ \ \ \ \includegraphics{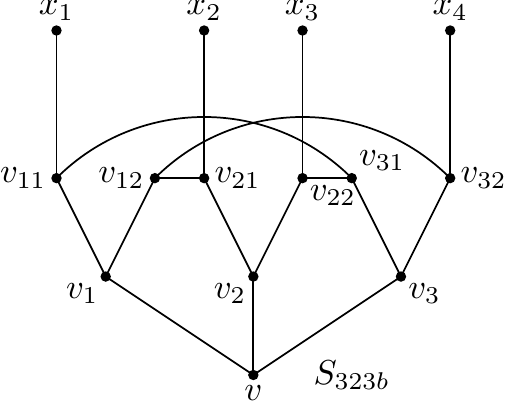}\\
\caption{Possible surroundings of a vertex contained in four $5$-circuits \cite{CL}.}
\label{fig2}
\end{figure} 
For the configuration $S_{242}$, 
we can assign $(v_{11},vv_1v_{11}v_{22}v_2)$ 
and $(v_{31},vv_3v_{31}v_{22}v_2)$ to $v$, 
while we simultaneously assign $(v_{12},vv_1v_{12}v_{21}v_2)$ 
and $(v_{32},vv_3v_{31}v_{22}v_2)$ to $v_2$ (which is also in $V_4$).
For the configuration $S_{323a}$, 
we can assign $(v_{11},vv_1v_{11}v_{32}v_3)$ 
and $(v_{32},vv_1v_{11}v_{32}v_3)$ to $v$.
For the configuration $S_{323b}$, 
we can assign $(v_{11},vv_1v_{11}v_{31}v_3)$ 
and $(v_{32},vv_1v_{12}v_{32}v_3)$ to $v$.
One can show that the assigned vertices are in $V_2 \cup V_1$ and that in assigned circuits vertices other than $v$ 
(and $v_2$ for the configuration $S_{242}$) are not in $V_4$. Therefore, this defines the desired injective function.
\end{proof}

\begin{proof}[Proof of Theorem~\ref{avoid}]
Let $G$ be some smallest counterexample to Theorem~\ref{thmmain}. By Lemmas~\ref{triangleLemma}-\ref{lemma3edgeCut2}, we can assume that $G$ has girth at least $5$, every $2$- or $3$-edge-cut separates an uncolourable subgraph of $G$, and ${\cal P}_{2m}=\emptyset$. Combining Lemmas~\ref{lemmaIFMG} and~\ref{lemmaVert}
we get that $G$ has a $2$-factor $F$ such that $G/F$ is $5$-odd-edge-connected and 
\begin{eqnarray*}
I(F,G) &\le&  1/6 |{\cal C}_5|  +  4/3 |{\cal P}_2| + |{\cal P}_3| \le \\
&\le&  1/10 |V(G)-V_{P_2}-V_{P_3}| + 4/30 |V_{P_2}| + 1/9 |V_{P_3}|= \\
&=& 1/10 |V(G)| + 1/30 |V_{P_2}| + 1/90 |V_{P_3}|= \\
&=& 1/10 |V(G)|+ 1/3 |{\cal P}_2|+1/10 |{\cal P}_3|
\end{eqnarray*}
which proves the theorem.
\end{proof}

We easily get the following corollaries from Theorem~\ref{avoid}.
\begin{corollary}
\label{cor3EdgeConn}
Let $G$ be a cyclically $3$-edge-connected cubic graph on $n$ vertices other than the Petersen graph. 
Then $G$ has a $2$-factor with at most $1/9 \cdot n$ circuits of length $5$. 
\end{corollary}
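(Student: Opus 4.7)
The plan is to apply Theorem~\ref{avoid} and then use cyclic $3$-edge-connectivity to kill the three error terms in the bound it provides.

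First I would argue ${\cal P}_2(G)=\emptyset$. Any $S\in{\cal P}_2(G)$ is a copy of $P_2$ on $10$ vertices with two vertices of degree $2$, so $|\delta(V(S))|=2$. Since $P_2$ contains cycles, cyclic $3$-edge-connectivity forces the outside $V(G)\setminus V(S)$ to be acyclic; a degree-sum count on the outside (a forest whose vertices are cubic except for two of degree $2$) then gives $|V(G)|\le 10$, so $V(S)=V(G)$ and $G$ must be the Petersen graph, contradicting the hypothesis. The same boundary argument applied to the $12$-vertex graph $P_{2m}$ shows ${\cal P}_{2m}(G)=\emptyset$ except possibly when $|V(G)|=12$ and $G=P_{2m}+e$; the $2$-circuit version of $P_{2m}$ would introduce a multi-edge, which is itself a cyclic $2$-edge-cut, so only the triangle version survives. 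Write $G^{\ast}$ for the resulting graph, namely the Petersen graph with a single triangle inserted into one vertex; I set this case aside.

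In the main case ${\cal P}_2(G)={\cal P}_{2m}(G)=\emptyset$, Theorem~\ref{avoid} supplies a $2$-factor $F$ with
\[
 I(F,G)\le \tfrac{1}{10}|V(G)|+\tfrac{1}{10}|{\cal P}_3(G)|.
\]
Lemma~\ref{disj} gives pairwise disjointness of the $9$-vertex subgraphs in ${\cal P}_3(G)$ in all but a handful of small exceptional graphs (most of which are ruled out by cyclic $3$-edge-connectivity or the hypothesis, and the remaining one, Petersen with two triangles inserted, can be handled by direct inspection); hence $|{\cal P}_3(G)|\le n/9$. Substituting gives $I(F,G)\le n/10+n/90=n/9$.

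For the exceptional graph $G^{\ast}$ I would exhibit a $2$-factor by hand. Let $abc$ be the inserted triangle with spokes $an_1,bn_2,cn_3$ to the three former neighbours $n_1,n_2,n_3$ of the replaced vertex $v$. The six vertices of the original Petersen graph outside $\{v,n_1,n_2,n_3\}$ induce a $6$-cycle. The perfect matching consisting of the three spokes together with either perfect matching of this $6$-cycle has as its complement the $2$-factor formed by the triangle $abc$ and a single $9$-cycle on the remaining nine vertices, which contains no $5$-circuit at all. Hence $I(F,G^{\ast})=0\le 12/9$. The real obstacle is this exceptional case: for $G^{\ast}$ the term $2|{\cal P}^{\ast}_{2m}(F,G)|/15$ in the bound of Theorem~\ref{avoid} can exceed $n/9-n/10$, so the theorem does not directly give the required inequality and the explicit construction above is necessary.
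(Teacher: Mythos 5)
Your proposal is correct and takes the route the paper intends: the paper derives this corollary straight from Theorem~\ref{avoid} (stating only that it follows ``easily''), with cyclic $3$-edge-connectivity used to rule out ${\cal P}_2$ and ${\cal P}_{2m}$ subgraphs via the $2$-edge boundary count and the disjointness of the $9$-vertex ${\cal P}_3$ subgraphs giving $|{\cal P}_3| \le n/9$, hence $I(F,G)\le n/10+n/90=n/9$, exactly as you argue. Your explicit handling of the exceptional graphs (the Petersen graph with one or two inserted triangles), where the black-box bound or Lemma~\ref{disj} does not directly apply, fills in details the paper leaves implicit, and the $2$-factors you construct there (triangle plus $9$-circuit, and analogously for two triangles) are valid.
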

\begin{corollary}
\label{cor4EdgeConn}
Let $G$ be a cyclically $4$-edge-connected cubic graph on $n$ vertices other than the Petersen graph. 
Then $G$ has a $2$-factor with at most $1/10 \cdot n$ circuits of length $5$. 
\end{corollary}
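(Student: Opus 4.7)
The plan is to apply Theorem~\ref{avoid} directly, showing that for a cyclically $4$-edge-connected cubic graph other than the Petersen graph every ``correction'' term in the bound vanishes. First I would observe that cyclic $4$-edge-connectivity implies $G$ is $2$-edge-connected, so Theorem~\ref{avoid} applies: a bridge in a cubic graph would give a $1$-edge-cut with cycles on each side, contradicting the hypothesis.

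The heart of the argument is to prove ${\cal P}_2(G) = {\cal P}_3(G) = {\cal P}_{2m}(G) = \emptyset$. Suppose $S\in{\cal P}_2(G)$. Then $|V(S)|=10$ and $S$ has exactly two vertices of degree $2$, so $\delta(V(S))$ is a $2$-edge-cut of $G$. Since $S$ contains cycles, cyclic $4$-edge-connectivity forces the complement $H=G-V(S)$ to be acyclic. A degree-sum check on $H$, whose vertices have degree $3$ in $G$ except for the two endpoints of the cut edges (which have degree $2$ in $H$), leaves only the possibility that $H$ is a single edge joining those two endpoints; but then $G$ is $P_2$ plus the missing edge, i.e.\ the Petersen graph, a contradiction. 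An entirely analogous argument handles $S\in{\cal P}_3(G)$: the $3$-edge-cut $\delta(V(S))$ forces the acyclic complement to be a single vertex joined to the three degree-$2$ endpoints of $S$, again reconstructing Petersen. Finally, any $S\in{\cal P}_{2m}(G)$ contains an inserted $2$- or $3$-circuit, which produces a non-trivial $2$- or $3$-edge-cut of $G$ whose complement contains a cycle (the previous two cases have already disposed of the only small configurations where this could fail), contradicting cyclic $4$-edge-connectivity.

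With all three special families empty, Theorem~\ref{avoid} produces a $2$-factor $F$ of $G$ with $I(F,G) \le n/10$, which is exactly the conclusion of the corollary.

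The main obstacle is the degree-sum/forest analysis ruling out non-trivial acyclic complements of $P_2$ and $P_3$; although elementary, some care is needed to verify that no configuration other than the Petersen graph itself slips through. A minor secondary check is the small-case sanity of the ${\cal P}_{2m}$ argument, but any sporadic exception on very few vertices either is excluded by the girth consequences of cyclic $4$-edge-connectivity or satisfies the bound $n/10$ trivially, so no serious difficulty arises.
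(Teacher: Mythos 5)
Your proposal is correct and takes essentially the same route as the paper, which states the corollary as an immediate consequence of Theorem~\ref{avoid}: cyclic $4$-edge-connectivity forces ${\cal P}_2(G)={\cal P}_3(G)={\cal P}_{2m}(G)=\emptyset$, so the bound collapses to $n/10$. One minor slip worth fixing: in the ${\cal P}_2$ case the degree count in fact shows a nonempty acyclic complement $H$ is impossible (a ``single edge'' complement would leave two vertices of degree $2$ in the cubic graph $G$), and the only surviving configuration is $H=\emptyset$ with the two degree-$2$ vertices of $S$ joined by the missing edge, which reconstructs the Petersen graph exactly as you conclude.
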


\section{Short cycle covers of cubic graphs}
In this section we will use the methods from \cite{kaiser} to construct a cycle cover of a bridgeless cubic graph and we prove a new upper bound on the length of this cover.
\begin{theorem}
\label{thmmain}
Every bridgeless cubic graph with $m$ edges has a cycle cover of length at most $1.6m$.
\end{theorem}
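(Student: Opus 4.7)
The plan is to combine Theorem~\ref{avoid} with the cycle cover construction of Kaiser et al.~\cite{kaiser}, slightly refined. First I would dispose of the Petersen graph as a separate base case: its shortest cycle cover has length $21 \le 24 = 1.6 \cdot 15 = 1.6 m$, so the claim holds. Otherwise, I apply Theorem~\ref{avoid} to obtain a 2-factor $F$ of $G$ such that $G/F$ is 5-odd-edge-connected and
\begin{equation*}
I(F,G) \le \tfrac{1}{10}|V(G)|+ \tfrac{1}{3}|{\cal P}_2(G)| + \tfrac{1}{10}|{\cal P}_3(G)| + \tfrac{2}{15}|{\cal P}^*_{2m}(F,G)|.
\end{equation*}

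Given $F$ with complementary perfect matching $M = E(G)\setminus F$, I would build a cycle cover $\{C_1, C_2, C_3\}$ of $G$ with $C_1 := F$ and $C_2, C_3$ obtained as $M \triangle B_2$ and $M \triangle B_3$ for suitably chosen $B_2, B_3 \subseteq F$. The requirement that $C_i$ has even degree at every vertex forces $B_i$ to be a $T_i$-join of $F$ for a specific vertex subset $T_i$ determined by $M$, and together $B_2, B_3$ must be chosen so that every edge of $M$ appears in at least one $C_i$. The 5-odd-edge-connectivity of $G/F$ guaranteed by Theorem~\ref{avoid} provides the flexibility needed to select $B_2, B_3$ with small total length, in the spirit of Kaiser et al.~\cite{kaiser}.

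Summing the three lengths, each even circuit of $F$ contributes at the cheapest rate, while each odd circuit forces additional cost in $|B_2|+|B_3|$, with 5-circuits being the most expensive (a 5-circuit cannot be traversed by a single trail together with its incident $M$-edges, so it forces extra multiplicity on either an $F$-edge or an $M$-edge). A careful bookkeeping gives a bound of the form
\begin{equation*}
|C_1|+|C_2|+|C_3| \le 2|V(G)| + 4 \cdot I(F,G) + (\text{corrections from special subgraphs}).
\end{equation*}
Substituting the bound on $I(F,G)$ from Theorem~\ref{avoid} and using $m = \tfrac{3}{2}|V(G)|$, this reads $2|V(G)| + \tfrac{4}{10}|V(G)| = 2.4|V(G)| = 1.6m$, once the special-subgraph corrections are absorbed: each subgraph in ${\cal P}_2, {\cal P}_3$, or ${\cal P}^*_{2m}$ has rigid Petersen-like local structure and admits an explicit short cycle cover whose contribution exactly offsets the overhead terms $\tfrac{1}{3}|{\cal P}_2|+\tfrac{1}{10}|{\cal P}_3|+\tfrac{2}{15}|{\cal P}^*_{2m}|$.

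The main obstacle will be the detailed accounting. One has to verify (i) that the $T$-joins $B_2, B_3$ can always be chosen with the claimed total length, using 5-odd-edge-connectivity of $G/F$ to bound the cost contributed by each odd circuit of $F$; and (ii) that the local contribution of each special subgraph cancels its term in Theorem~\ref{avoid}. The slack is tight --- the pure bound $I(F,G) \le |V(G)|/10$ already saturates $1.6m$ --- so the analysis of the Petersen-like subgraphs must be carried out without any loss.
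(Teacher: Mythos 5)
Your first step (take the $2$-factor $F$ from Theorem~\ref{avoid}, treat the Petersen graph separately) matches the paper, but the cover construction you build on top of it does not work as stated. If $C_2=M\triangle B_2$ with $B_2\subseteq F$, then since every vertex meets exactly one edge of $M$, evenness of $C_2$ forces every vertex to have odd degree in $B_2$; inside the $2$-regular graph $F$ this means degree exactly one, i.e.\ $B_2$ must be a perfect matching of $F$. Such a $B_2$ exists only when every circuit of $F$ is even, and a cubic graph with a $2$-factor consisting of even circuits is $3$-edge-colourable; for every snark (the only interesting case) any $2$-factor contains an odd circuit, so your $T_2$-join ($T_2=V(G)$) simply does not exist inside $F$. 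One would have to let the $B_i$ leave $F$ or drop the requirement that both $C_2,C_3$ contain all of $M$, and then none of the subsequent bookkeeping is in place.

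The quantitative claim is also unrecoverable in the form you state. A bound of the shape $|C_1|+|C_2|+|C_3|\le 2|V(G)|+4\,I(F,G)+(\text{special corrections})$ would give a cover of length $2|V(G)|=\tfrac{4}{3}m$ for any graph possessing a $2$-factor without $5$-circuits (e.g.\ snarks of girth at least $6$), whereas a cycle cover of length $\tfrac{4}{3}m$ exists only for cubic graphs with a nowhere-zero $4$-flow: odd circuits of length $7,9,\dots$ also force extra length, not only $5$-circuits, so penalizing $I(F,G)$ alone cannot suffice. The paper's route is genuinely different at this point: it turns $F$ into a \emph{rainbow} $2$-factor by taking a nowhere-zero $\mathbb{Z}_2^{\,2}$-flow on the $5$-odd-edge-connected graph $G/F$, uses the splitting lemmas to restrict the colour patterns on $4$-, $6$- and $8$-circuits and to force one $5$-circuit of each special subgraph to have pattern RRRGB (Lemma~\ref{patternLemma}), builds \emph{two} covers whose lengths are bounded circuit-by-circuit (Lemmas~\ref{1CoverLength} and~\ref{2CoverLength}), and takes the $1/3:2/3$ convex combination, arriving at $7/3\cdot|V(G)|+2/15\cdot v_5-2/3\cdot|{\cal P}_2\cup{\cal P}_3|$; here the per-vertex cost is $7/3$, not $2$, the $5$-circuit penalty is only $2/15$ per vertex, and the special-subgraph credit $-2/3\cdot|{\cal P}_2\cup{\cal P}_3|$ comes from the forced RRRGB pattern, which is exactly what is needed to absorb the terms $\tfrac13|{\cal P}_2|+\tfrac1{10}|{\cal P}_3|$ of Theorem~\ref{avoid}. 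Your proposal leaves both the feasibility of the $T$-joins and the entire accounting (including the cancellation of the special-subgraph terms) as unsupported assertions, and the central inequality it relies on is false.
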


To prove this theorem we take the $2$-factor $F$ from Theorem~\ref{avoid} and colour the edges outside of $F$ using three colours. This having as a basis, we create two cycle covers of $G$ and we bound their total lengths in terms of specific circuit lengths in $F$. As the last step, we make a convex combination of the two bounds, which gives us the required bound for the shortest cycle cover.

\subsection{Rainbow $2$-factor}
A \emph{rainbow $2$-factor} in $G$ is a $2$-factor of $G$ together with the colouring of edges from $G/F$ with three colours R, G, and B such that in $G/F$ the numbers of edges of each colour incident to a vertex have the same parity. 
By the result of Jaeger \cite{jaeger} we know that a $5$-odd-connected graph has 
a nowhere-zero $\mathbb{Z}_2^{\; 2}$-flow.
As $G/F$ is $5$-odd-connected, we have a nowhere-zero $\mathbb{Z}_2^{\; 2}$-flow on $G/F$ and we can map the elements of $\mathbb{Z}_2^{\; 2}$
to the set of colours R, G, B to obtain the desired colouring. Hence the $2$-factor from Theorem~\ref{avoid} can be extended into a rainbow $2$-factor.

Let $C=v_1v_2\ldots v_kv_1$ be a circuit of a rainbow $2$-factor $F$ and $P_1,P_2,\ldots, P_k$ be the colours of the non-circuit edges incident with $v_1,v_2,\ldots,v_k$, respectively. We say that $C$ has \emph{type} $P_1P_2\ldots P_k$. We consider two types to be the same if we can obtain one of them by some rotations and reflections from the other one. We do not allow colour permutation, therefore, RRGG and RGGR are the same type but RRGG and RRBB are not.
 Similarly, we say that $C$ has \emph{pattern} $P_1P_2\ldots P_k$.
We consider two patterns to be the same if we can obtain one of them by some rotations, reflections, and colour permutations from the other one, therefore, RRGG and BRRB are the same patterns.
We impose several additional constraints on patterns of circuits in $2$-factor $F$ from Theorem~\ref{avoid}.

Instead of taking an arbitrary nowhere-zero $\mathbb{Z}_2^{\; 2}$-flow on $G/F$, we use splitting lemmas to reduce the number of available patterns for short circuits.
Let $H$ be a graph, let $v$ be a vertex of degree at least four in $H$, and $v_1$ and $v_2$ be two of his neighbours. We denote by $H.v_1vv_2$ a graph obtained from $H$ by removing the edges $vv_1,vv_2$ and adding a path of length two between $v_1$ and $v_2$. We call this operation \emph{splitting} the vertex $v$. The following lemmas from \cite{kaiser} states that certain splitting preserves the property of a graph to be $5$-odd-connected.
\begin{lemma}
Let $H$ be a $5$-odd-connected graph, and let $v$ be a vertex of degree four and $v_1,v_2,v_3$, and $v_4$ its four neighbours. Then the graph $H.v_1vv_2$ or $H.v_2vv_3$ is also $5$-odd-connected graph.
\label{lemma4vertex}
\end{lemma}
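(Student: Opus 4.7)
The plan is to argue by contradiction. Assume that both $H.v_1vv_2$ and $H.v_2vv_3$ contain an odd edge cut of size at most $3$. My strategy is to translate each such small odd cut into a $5$-cut of $H$, then pin down the positions of $v$'s four neighbours relative to these $5$-cuts by ``peeling off'' $v$, and finally derive a contradiction using submodularity of edge cuts.

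For the first step, consider a splitting $H.v_ivv_j$ with new degree-$2$ vertex $u$ and a subset $X$ of the split graph such that $|\delta_{H.v_ivv_j}(X)|$ is odd and at most $3$. Taking complements if necessary, assume $v \in X$, and let $X' = X \cap V(H)$. A short case analysis on the memberships of $v_i, v_j, u$ in $X$ shows that the difference $|\delta_H(X')| - |\delta_{H.v_ivv_j}(X)|$ lies in $\{-2, 0, +2\}$, and the cuts have the same parity in every case. The $5$-odd-edge-connectivity of $H$ then rules out all configurations but one: $v \in X'$, $v_i, v_j \notin X'$, $|\delta_H(X')| = 5$ (achieved with $u \notin X$ and $|\delta_{H.v_ivv_j}(X)| = 3$). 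Applied to both failed splittings, this produces sets $X, Y \subseteq V(H)$ with $v \in X \cap Y$, $\{v_1, v_2\} \cap X = \emptyset = \{v_2, v_3\} \cap Y$, and $|\delta(X)| = |\delta(Y)| = 5$.

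For the second step, I consider the set $X \setminus \{v\}$: since $v_1, v_2 \notin X$, writing $a$ for the number of $v$'s neighbours contained in $X$ (so $a \in \{0,1,2\}$), a direct calculation gives $|\delta(X \setminus \{v\})| = 5 - (4 - a) + a = 2a + 1$. The set $X \setminus \{v\}$ is a proper non-empty subset of $V(H)$ and its cut size is odd, so $5$-odd-edge-connectivity forces $2a + 1 \ge 5$, i.e.\ $a = 2$, whence $v_3, v_4 \in X$. Symmetrically, $v_1, v_4 \in Y$. The four neighbours of $v$ are thus located precisely: $v_1 \in Y \setminus X$, $v_2 \in V \setminus (X \cup Y)$, $v_3 \in X \setminus Y$, $v_4 \in X \cap Y$.

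For the third step, I partition $V(H)$ into $R_1 = X \cap Y$, $R_2 = X \setminus Y$, $R_3 = Y \setminus X$, $R_4 = V \setminus (X \cup Y)$, and let $P_{ij}$ denote the number of $R_i$-$R_j$ edges; note that $v$ contributes exactly one edge to each of $P_{12}, P_{13}, P_{14}$ while $vv_4$ is internal to $R_1$. The identities $|\delta(X)| = |\delta(Y)| = 5$ together with the submodular identity $|\delta(R_1)| + |\delta(R_4)| + 2 P_{23} = 10$ tightly constrain the $P_{ij}$. When $|\delta(R_1)|$ and $|\delta(R_4)|$ are both odd they are each forced to equal $5$ by $5$-odd-edge-connectivity, in which case $|\delta(R_1 \setminus \{v\})| = |\delta(R_1)| - 2 = 3$ is an odd cut of size less than $5$ in $H$, a contradiction. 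When both are even, a finite check over the admissible pairs $(|\delta(R_1)|, |\delta(R_4)|) \in \{(4,2),(4,4),(4,6),(6,2),(6,4),(8,2)\}$ reveals that in every case at least one of $|\delta(R_2)| = P_{12} + P_{23} + P_{24}$ or $|\delta(R_3)| = P_{13} + P_{23} + P_{34}$ is odd and strictly less than $5$, again contradicting $5$-odd-edge-connectivity. The main obstacle is the bookkeeping in this finite case analysis, but once $v$'s neighbours are pinned down in step two, the linear relations on the $P_{ij}$ leave only finitely many configurations, each routine to verify.
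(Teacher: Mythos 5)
Your argument is correct, but note that this paper does not actually prove Lemma~\ref{lemma4vertex}: it is imported verbatim from the Kaiser--Kr\'a\v{l}--Lidick\'y--Nejedl\'y--\v{S}\'amal paper cited as \cite{kaiser}, so there is no in-paper proof to compare against. Your self-contained uncrossing argument checks out at every step. Step 1 is right: for a violating set $X\ni v$ in the split graph, the cut sizes in $H$ and in $H.v_ivv_j$ differ by $0$ or $\pm 2$ with equal parity, and $5$-odd-edge-connectivity kills every configuration except $v\in X'$, $v_i,v_j\notin X'$, $|\delta_H(X')|=5$. Step 2 is right as well (note $X\setminus\{v\}\neq\emptyset$ because $|\delta(\{v\})|=4\neq 5$), and it pins the neighbours as you state. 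In Step 3 your ``finite check'' claim is true, and in fact it can be closed without any table: writing $P_{ij}$ as you do, the two equations $|\delta(X)|=|\delta(Y)|=5$ give $|\delta(R_2)|=5+P_{12}-P_{13}-P_{14}$ and $|\delta(R_3)|=5+P_{13}-P_{12}-P_{14}$, so $|\delta(R_2)|\equiv|\delta(R_3)|\equiv 1+|\delta(R_1)| \pmod 2$ and $|\delta(R_2)|+|\delta(R_3)|=10-2P_{14}\le 8$ (since $vv_2$ forces $P_{14}\ge 1$); hence in your ``both even'' case both $|\delta(R_2)|$ and $|\delta(R_3)|$ are odd and one of them is at most $3$, with $R_2\ni v_3$ and $R_3\ni v_1$ nonempty, giving the contradiction uniformly over all admissible pairs $\left(|\delta(R_1)|,|\delta(R_4)|\right)$. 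The ``both odd'' case via $|\delta(R_1\setminus\{v\})|=3$ is also correct. So the proposal is a complete and valid proof; its only blemishes are presentational (the final case analysis is heavier than needed, and a couple of nonemptiness/properness remarks are left implicit).
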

\begin{lemma}
Let $H$ be a $5$-odd-connected graph, and let $v$ be a vertex of degree six and $v_1,\ldots,v_6$ its neighbours. At least one of the graphs $H.v_1vv_2$, $H.v_2vv_3$, and $H.v_3vv_4$ is also $5$-odd-connected.
\label{lemma6vertex}
\end{lemma}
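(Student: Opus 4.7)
The plan is to argue by contradiction using submodularity of the cut function. Suppose none of $H'_i := H.v_ivv_{i+1}$ for $i \in \{1,2,3\}$ is $5$-odd-edge-connected; then each $H'_i$ contains an obstructing odd edge cut $\delta_{H'_i}(S_i)$ of cardinality at most $3$. Let $w_i$ denote the new degree-two vertex introduced by the splitting, and set $T_i := S_i \setminus \{w_i\}$ or $T_i := S_i$ according to whether $w_i \in S_i$. A short case analysis on the location of $w_i$ and $v$ relative to $S_i$ shows that $|\delta_H(T_i)| - |\delta_{H'_i}(S_i)| \in \{-2, 0, +2\}$; in particular $|\delta_H(T_i)|$ is odd. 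Since $H$ is $5$-odd-edge-connected, $|\delta_H(T_i)| \geq 5$, which forces $|\delta_{H'_i}(S_i)| = 3$, the difference to equal $+2$, and the configuration to place $\{v_i, v_{i+1}\}$ on the opposite side of $T_i$ from $v$. Replacing $T_i$ by its complement if necessary, I may assume that for each $i$, $v \notin T_i$, $\{v_i, v_{i+1}\} \subseteq T_i$, and $|\delta_H(T_i)| = 5$, with the two edges $vv_i, vv_{i+1}$ accounting for two of those five crossing edges.

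Next I would exploit submodularity: $|\delta_H(A)| + |\delta_H(B)| \geq |\delta_H(A \cap B)| + |\delta_H(A \cup B)|$, with the slack equal to $2\cdot|E_H(A \setminus B, B \setminus A)|$. Applied to consecutive pairs $T_1, T_2$ (which share $v_2$) and $T_2, T_3$ (which share $v_3$), this yields $|\delta_H(T_i \cap T_{i+1})| + |\delta_H(T_i \cup T_{i+1})| \leq 10$. Since each $|\delta_H(T_i)|$ is odd, the left-hand sums are even; whenever both summands are themselves odd, one of them is at most $5$, so if it equals $1$ or $3$ we already obtain an odd cut of $H$ violating $5$-odd-connectivity. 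The residual possibilities are either $(5,5)$ or both summands even; in these regimes I would iterate with $T_3$, or invoke the cross-inequality $|\delta_H(T_i \setminus T_{i+1})| + |\delta_H(T_{i+1} \setminus T_i)| \leq 10$, to manufacture a strictly smaller odd cut.

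The main obstacle is this parity-versus-size bookkeeping at the final step. Submodularity controls total size but not the parities of the derived cuts; the symmetric difference $T_i \triangle T_j$ always yields an \emph{even} cut, which is useless for $5$-odd-connectivity, and intersections or unions may be forced into an unhelpful even-parity regime by the distribution of odd-degree vertices of $H$. Showing that the triple interaction of $T_1, T_2, T_3$ must produce at least one set $T$ with $|\delta_H(T)|$ odd and at most $3$ requires an exhaustive case analysis on whether pairs of $T_i$ are crossing, nested, or laminar, and on how the odd-degree vertices of $H$ distribute among the eight atoms of the partition induced by the three sets. This is the characteristic technical core of splitting-off lemmas in the style of Mader and of \cite{kaiser}; the three available choices $(v_i, v_{i+1})$ for $i \in \{1,2,3\}$ provide just enough flexibility to rule out every branch of the case analysis.
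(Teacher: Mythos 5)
Your first step is sound and standard: lifting an obstructing odd cut of $H.v_ivv_{i+1}$ back to $H$, checking that the cut size changes by $-2$, $0$, or $+2$ with parity preserved, and concluding that for each $i$ there must be a set $T_i$ with $v_i,v_{i+1}\in T_i$, $v\notin T_i$ and $|\delta_H(T_i)|=5$ (so that the split pair $vv_i,vv_{i+1}$ accounts for two of the five crossing edges). This is exactly how one begins. However, what follows is not a proof but a declaration of intent, and the missing part is precisely where the content of the lemma lies. As you yourself observe, submodular uncrossing of two $5$-cuts $T_i,T_{i+1}$ only gives $|\delta_H(T_i\cap T_{i+1})|+|\delta_H(T_i\cup T_{i+1})|\le 10$ with even total parity, which is compatible with the pair $(5,5)$ and with both summands even; the symmetric difference always gives an even cut; and the cross-inequality for $T_i\setminus T_{i+1}$ and $T_{i+1}\setminus T_i$ suffers the same parity defect. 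None of these, by themselves, produce an odd cut of size at most $3$. The claim that ``the three available choices provide just enough flexibility to rule out every branch of the case analysis'' is exactly the statement to be proved, and it is nowhere established: you never use that $v$ has degree six, nor the specific consecutive structure of the three pairs $(v_1,v_2),(v_2,v_3),(v_3,v_4)$, beyond noting that consecutive $T_i$'s share a neighbour of $v$. Without the counting of edges incident with $v$ and of edges between the atoms determined by $T_1,T_2,T_3$ (including the crossing-versus-nested distinction and the placement of the remaining neighbours $v_4,v_5,v_6$), the argument does not close, so this is a genuine gap rather than a complete alternative proof.

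For comparison, the present paper does not prove Lemma~\ref{lemma6vertex} at all: it is quoted from \cite{kaiser}, where the full uncrossing and counting analysis sketched above is actually carried out. Your reduction to the sets $T_i$ agrees with that treatment, so the route is not wrong; it is simply unfinished at its decisive step. To turn the sketch into a proof you would need to perform the case analysis you defer, or to quote the splitting lemma of \cite{kaiser} directly, as the paper does.
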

\begin{lemma}
Let $H$ be a $5$-odd-connected graph, and let $v$ be a vertex of degree $d\ge 6$ and $v_1,\ldots,v_d$ its neighbours. 
At least one of the graphs $H.v_ivv_{i+1}$, where $i\in \{1, 2, 3, 4, 5\}$, is also $5$-odd-connected.
\label{lemma8vertex}
\end{lemma}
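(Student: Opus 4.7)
The plan is to prove the lemma by contradiction, via a submodular uncrossing argument of the kind standard for splitting-off theorems. For $d = 6$ the statement is already an immediate consequence of Lemma~\ref{lemma6vertex}, whose three admissible splits $v_1vv_2$, $v_2vv_3$, $v_3vv_4$ are a subset of the five listed here; so I concentrate on $d \ge 7$ and assume, for contradiction, that for every $i \in \{1,\ldots,5\}$ the graph $H.v_iv v_{i+1}$ fails to be $5$-odd-connected.

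The first step converts each failure into a small odd cut of $H$ itself. Let $H_i' = H - vv_i - vv_{i+1} + v_iv_{i+1}$. Since $H.v_iv v_{i+1}$ is just $H_i'$ with the new edge $v_iv_{i+1}$ subdivided by a degree-$2$ vertex, the two graphs have identical odd-cut structures, so $H_i'$ also contains an odd cut of size at most $3$. A short case analysis on whether $v \in S$ and on how many of $v_i, v_{i+1}$ lie in $S$ shows that for every $S \subseteq V(H)$ the cut sizes $|\delta_H(S)|$ and $|\delta_{H_i'}(S)|$ agree in parity, that $|\delta_H(S)| - |\delta_{H_i'}(S)| \in \{0,2\}$, and that the difference equals $2$ exactly when $v$ is separated from both $v_i$ and $v_{i+1}$. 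Combined with the $5$-odd-connectivity of $H$, this yields a witness set $S_i \subseteq V(H)$ with $|\delta_H(S_i)| = 5$, $v \notin S_i$, and $\{v_i,v_{i+1}\} \subseteq S_i$ (possibly after complementation).

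The second step combines the five witnesses using the submodular and posimodular inequalities
\[
|\delta(A)| + |\delta(B)| \ge |\delta(A \cap B)| + |\delta(A \cup B)|, \qquad |\delta(A)| + |\delta(B)| \ge |\delta(A \setminus B)| + |\delta(B \setminus A)|.
\]
Consecutive witnesses $S_i,S_{i+1}$ share the vertex $v_{i+1}$ and both exclude $v$, and both have cut size $5$, so the right-hand side above is $10$ and the two combined cuts have matching parity. When this parity is odd, $5$-odd-connectivity forces both combined cuts to equal exactly $5$, and the union $S_i \cup S_{i+1}$ continues the iteration with cut size $5$ while capturing one more neighbor of $v$. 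The even case is ruled out by choosing each $S_i$ inclusion-minimal subject to its defining properties and then exploiting the posimodular bound, using the facts that the edge $vv_{i+1}$ already contributes to $\delta(S_i \cap S_{i+1})$ and that a proper subset of $S_i$ obtained by peeling off part of $S_i \cap S_{i+1}$ would otherwise produce a strictly smaller witness. Iterating the merge along the chain $(S_1,S_2), (S_1 \cup S_2, S_3), \ldots$ produces a single set $S^*$ with $v \notin S^*$, $\{v_1,\ldots,v_6\} \subseteq S^*$, and $|\delta_H(S^*)| \le 5$. The six distinct edges $vv_1,\ldots,vv_6$ then all lie in $\delta(S^*)$, so $|\delta(S^*)| \ge 6$, contradicting $|\delta(S^*)| \le 5$.

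The main obstacle is precisely the parity bookkeeping in the uncrossing step: two odd cuts of size $5$ uncross to a pair whose combined sizes sum to at most $10$ with total parity even, so without further control both new cuts may be even and possibly smaller than $5$, which would break the iteration. Handling this is the substance of the argument and is what forces the use of both inclusion-minimality of the witnesses and the posimodular (rather than merely submodular) inequality; the same mechanism underlies Lemma~\ref{lemma6vertex}, so the present proof is naturally viewed as the argument for that lemma prolonged by two additional merge steps to accommodate the longer chain of witnesses arising when $d \ge 7$.
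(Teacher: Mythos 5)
Your overall strategy is sound, and since the paper itself gives no proof of Lemmas~\ref{lemma4vertex}--\ref{lemma8vertex} (they are quoted from \cite{kaiser}), it can only be judged on its own terms; it is the standard splitting-off/uncrossing argument. Step 1 is correct: if $H.v_iv v_{i+1}$ is not $5$-odd-connected, the cut comparison you describe really does produce a witness $S_i$ with $v\notin S_i$, $\{v_i,v_{i+1}\}\subseteq S_i$ and $|\delta_H(S_i)|=5$, and your handling of the odd-parity case of each merge (both $|\delta(A\cap B)|$ and $|\delta(A\cup B)|$ odd, hence both exactly $5$, take the union) is correct, as is the final count of the six distinct edges $vv_1,\dots,vv_6$.

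The gap is exactly where you yourself locate ``the substance of the argument'': the even case. The mechanism you invoke --- inclusion-minimal witnesses $S_i$ plus the claim that ``peeling off part of $S_i\cap S_{i+1}$'' would yield a strictly smaller witness --- does not work: a set obtained from $S_i$ by deleting vertices of $S_i\cap S_{i+1}$ no longer contains $v_{i+1}$ (and need not be tight), so it is not a witness for the pair $\{v_i,v_{i+1}\}$ and minimality of $S_i$ says nothing about it; moreover, from the second merge onward the left operand $S_1\cup\dots\cup S_k$ is not one of your minimally chosen sets, so the device could not be iterated even if it worked once. What actually eliminates the even case is a parity flip you never state. Since $|\delta(X)|\equiv\sum_{x\in X}\deg(x)\pmod 2$, if $|\delta(A\cap B)|$ and $|\delta(A\cup B)|$ are both even while $|\delta(A)|=|\delta(B)|=5$, then $|\delta(A\setminus B)|$ and $|\delta(B\setminus A)|$ are both odd; they are nonempty (otherwise $A\cap B$ would equal $A$ or $B$ and have odd cut size $5$), hence each is at least $5$ by $5$-odd-connectivity, so their sum is at least $10$. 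But the posimodular identity $|\delta(A)|+|\delta(B)|=|\delta(A\setminus B)|+|\delta(B\setminus A)|+2d(A\cap B,\overline{A\cup B})$, where $d(X,Y)$ counts edges between $X$ and $Y$, together with the edge $vv_{i+1}$ joining $A\cap B$ to $v\in\overline{A\cup B}$, bounds that sum by $8$ --- a contradiction. So the even case never occurs at all, no minimality is needed, and the corrected merge works verbatim for every $d\ge 6$, which also makes your separate appeal to Lemma~\ref{lemma6vertex} (itself only cited, not proved, in this paper) unnecessary.
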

Doing splitting operations in certain manner puts some constraints on  patterns of circuits in a $2$-factor. Suppose that $G$ is a bridgeless cubic graph with a $2$-factor $F$ such that $G/F$ is $5$-odd-connected. After applying splitting operations on vertices of $G/F$ we can find a nowhere-zero $\mathbb{Z}_2^{\; 2}$-flow of $G/F$ where the edges that were split, receive the same value.

Let $F$ be a rainbow $2$-factor.
For a circuit $C \in F$ we define an \emph{improvement function} $P(C,F)$, which will help us to bound the length of cycle covers. (As we will see later, certain colour types guarantee shorter cycle cover, hence the value of $P(C,F)$ expresses by how much the general bound is locally improved on one circuit $C$.)
\begin{center}
$P(C,F) =
\begin{cases}
1 & \text{if $C$ has type RR} \\
3 & \text{if $C$ has type GG} \\
3 & \text{if $C$ has type BB} \\
1 & \text{if $|C|=4$} \\
1 & \text{if $|C|=6$ and type of $C$ contains only one colour}\\
0 & \text{if $|C|=6$ and type of $C$ contains exactly two colours: R and G} \\
1 & \text{if $|C|=6$ and type of $C$ contains exactly two colours: R and B} \\
2 & \text{if $|C|=6$ and type of $C$ contains exactly two colours: B and G} \\
1 & \text{if $|C|=6$ and type of $C$ contains  all three colours} \\
0 & \text{otherwise}
\end{cases}$
\end{center}

Let $P(F)$ be the sum of $P(C,F)$ for all circuits $C\in F$. Let $d_i$ be the number of circuits of length $i$ in $F$. We extend the result from \cite{kaiser} by constructing a rainbow $2$-factor of the following properties.
\begin{lemma}
\label{patternLemma}
Every cubic bridgeless graph $G$ other than the Petersen graph has rainbow $2$-factors $F_1$ and $F_2$ such that
\begin{enumerate}
\item[1a.] the number of $5$-circuits in $F_1$ is at most $1/10\cdot |V| + 1/3|{\cal P}_2| + 1/10|{\cal P}_3|$,
\item[1b.]the number of $5$-circuits in $F_2$ is at most $1/6$ of all $5$-circuits in $G$,
\item[2.] $G/F$ is $5$-odd-connected, for $F\in \{F_1, F_2\}$,
\item[3.] every circuit of length four has pattern RRRR or RRGG, 
\item[4.] every circuit of length six has pattern RRRRRR, RRRRGG, RRGRRG, RRGGBB, or RRGBBG,
\item[5.] every circuit of length eight has one of the following patterns: RRRRRRRR, RRRRRRGG, RRRRGGGG, RRRRGGBB, RRGGRRGG, RRGGRRBB, RRRRGRRG, RRRRGBBG, RRGGRGGR, RRGGRBBR, RRGGBRRB, RRRRGRGR, RRRGBGBR, RRGRGRGG, RRGRBRBG, and RRGGBGBG,
\item[6.] for each $P \in {\cal P}_2 \cup {\cal P}_3$ at least one $5$-circuit has pattern  RRRGB,
\item[7.] $P(F) \ge 7/3 \cdot d_2 + d_4 + d_6$, for $F\in \{F_1, F_2\}$.
\end{enumerate}

\end{lemma}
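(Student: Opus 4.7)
The plan is to construct $F_1$ and $F_2$ separately, then to constrain the allowed patterns by iterated splitting operations on the contracted graphs before invoking Jaeger's theorem.  For $F_1$ I would take the $2$-factor produced by Theorem~\ref{avoid}, which already satisfies (1a) and (2).  For $F_2$ I would repeat the polytope argument from the proof of Lemma~\ref{lemmaIFMG}, but with the sum in
\[
f({\bf p}) = \tfrac14 \sum_{C} \sum_{e \in \delta(C)} p_e
\]
now ranging over \emph{all} $5$-circuits of $G$ (and with no contribution from special subgraphs).  Choosing $M$ so as to minimise $f$ inside a convex decomposition of $(1/3, \dots, 1/3)$, the same accounting as in Lemma~\ref{lemmaIFMG} gives $I(F_2, G) \le \tfrac{1}{6} |{\cal C}_5(G)|$, and since every $3$-edge-cut sums to $1$ at $(1/3,\dots,1/3)$, the matching $M$ meets each such cut in exactly one edge, so $G/F_2$ is $5$-odd-edge-connected.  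This yields (1b) and (2).

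To turn each $F_i$ into a rainbow $2$-factor I would iteratively split vertices of $G/F_i$ using Lemmas~\ref{lemma4vertex},~\ref{lemma6vertex}, and~\ref{lemma8vertex}, which all preserve $5$-odd-edge-connectedness.  A circuit $C$ of length $k$ in $F_i$ becomes a vertex of degree $k$ in $G/F_i$; splitting such a vertex forces the two paired non-circuit edges to carry the same value in any nowhere-zero $\mathbb{Z}_2^{\,2}$-flow.  For $k=4$ one application of Lemma~\ref{lemma4vertex} pairs two consecutive edges, and the remaining two agree automatically by flow conservation, giving a pattern $aabb$; for $k=6$ I apply Lemma~\ref{lemma6vertex} and then Lemma~\ref{lemma4vertex} on the residual degree-$4$ vertex; for $k=8$ I apply Lemma~\ref{lemma8vertex}, then Lemma~\ref{lemma6vertex}, then Lemma~\ref{lemma4vertex}.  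Once all the chosen splittings are performed, Jaeger's theorem supplies a nowhere-zero $\mathbb{Z}_2^{\,2}$-flow on the resulting graph, which I identify with a colouring of $E(G) \setminus E(F_i)$ by $\{R,G,B\}$ so that paired edges receive the same colour.

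Conditions (3)--(5) then reduce to a finite case analysis: for each combination of choices offered by the splitting lemmas one enumerates the possible patterns up to rotation, reflection, and colour permutation, and verifies that the outcome lies in the listed families.  Condition (6) is more delicate: for each $P \in {\cal P}_2 \cup {\cal P}_3$, the restriction of $F_i$ (together with the removed edge or vertex of the ambient Petersen graph) extends to a $2$-factor of the Petersen graph, which must consist of two disjoint $5$-circuits; by examining the few possible colourings of the two or three boundary edges of $P$ compatible with a nowhere-zero $\mathbb{Z}_2^{\,2}$-flow and, where necessary, re-routing the flow inside $P$ (which does not affect the colouring outside, since $P$ is separated by a small edge-cut), one shows that some $5$-circuit of $F_i$ inside $P$ necessarily has pattern RRRGB.

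Condition (7) is obtained by averaging over the six permutations of $\{R,G,B\}$ applied to the whole flow.  For each $2$-circuit the mean of $P(C,F)$ is $(1+3+3)/3 = 7/3$; for each $4$-circuit it is $1$; and for each $6$-circuit the single-colour pattern contributes $1$, the three two-colour patterns contribute $0,1,2$ averaging to $1$, and the three-colour pattern contributes $1$, so the mean is again $1$.  Hence some colour permutation achieves $P(F) \ge 7/3\cdot d_2 + d_4 + d_6$, and we fix that permutation.  The main obstacle in writing the proof out in full will be the enumeration of patterns in (3)--(5) and, more seriously, the case analysis of (6), where the rigid structure of the Petersen-like subgraphs must be combined with the freedom that survives the splittings; the rest of the argument is either a direct citation of Theorem~\ref{avoid} and Lemmas~\ref{lemmaIFMG}--\ref{lemma8vertex} or routine averaging.
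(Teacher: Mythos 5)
Your proposal follows essentially the same route as the paper: $F_1$ is taken from Theorem~\ref{avoid}, $F_2$ by rerunning the perfect-matching-polytope argument of Lemma~\ref{lemmaIFMG} over all $5$-circuits, the splitting Lemmas~\ref{lemma4vertex}--\ref{lemma8vertex} plus Jaeger's theorem give patterns (3)--(5), local recolouring across the small cut around each special subgraph gives (6), and averaging over the six colour permutations gives (7). The only substantive detail the paper supplies beyond your sketch is the subcase of (6) where the second circuit meeting a special subgraph has length $8$, so the required RRRGB pattern must be reconciled with condition (5); the paper settles this with a short table of admissible colourings of the cut edges.
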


\begin{proof}
Let $F_1$ be the $2$-factor from Theorem~\ref{avoid}.
For this $2$-factor statements 1a and 2 hold.
Also, one can easily find a $2$-factor, such that 1b and 2 hold, using the proof of Lemma~\ref{lemmaIFMG} ignoring special subgraphs (see also Proposition~5 of \cite{LMMS}). For all other statements, the remaining of the proof is the same for both $2$-factors. We denote by $F$ an arbitrary $2$-factor of those two.

We repeatedly use Lemmas~\ref{lemma4vertex}-\ref{lemma8vertex} in the graph $G/F$ to obtain a $5$-odd-connected graph with no vertices of degree $4$, $6$, or $8$. Such graph has a nowhere-zero $\mathbb{Z}_2^{\; 2}$-flow \cite{jaeger}, which gives a rainbow colouring of the edges in $G/F$, where each pair of edges that are split from a vertex gets the same colour. Therefore, each $4$-circuit in $F$ must have either the pattern RRRR or RRGG and the third part of the lemma is satisfied. 

Consider a circuit of length $6$ in $F$ with incident edges $e_1,\ldots,e_6$. Without loss of generality, by Lemma~\ref{lemma6vertex} let $e_1$ and $e_2$ be the first pair of edges split from the vertex. The second pair can be either $e_3$ and $e_4$ or $e_4$ and $e_5$ up to symmetry by Lemma~\ref{lemma4vertex}. Therefore, only the patterns of the form $P_1P_1P_2P_2P_3P_3$ and $P_1P_1P_3P_2P_2P_3$ can be created where $P_1,P_2,P_3 \in \{R,G,B\}$, and the fourth statement holds. Similar process can be used for the $8$-circuits of $F$ from the fifth statement of the lemma. We refer the reader to Lemma 14 from \cite{kaiser} where the proof of this statement can be found with all details.

Let us prove the part 6.
Consider a $5$-circuit inside a special subgraph $P$ from ${\cal P}_2 \cup {\cal P}_3$. 
There are three possibilities how $F$ can intersect $P$ as shown on Figure~\ref{flema}. 
\begin{figure}[htp!]
\center
\includegraphics[scale=1.3]{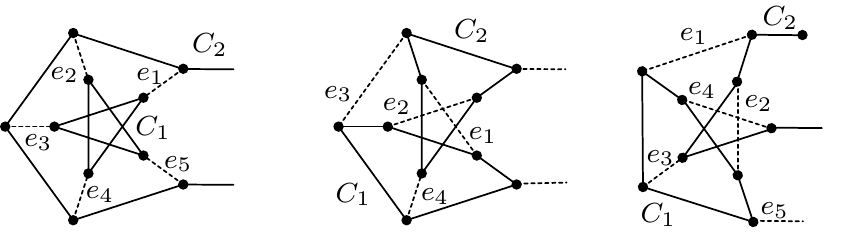}
\caption{$2$-factor in $P_1$ and $P_3$}
\label{flema}
\end{figure}
In the first case, the $2$-factor intersects $P$ in one $5$-circuit $C_1$ and a circuit $C_2$ with $|C_2| \in \{7, 9, 10, 11, 12, \dots\}$ (for lengths $6$ and $8$ there would be either a bridge or a $3$-cut in $G/F$). Let $v_C$ be a vertex in $G/F$ corresponding to the circuit $C$ in $G$. The vertex $v_{C_1}$ is connected to $v_{C_2}$ by $5$ edges $e_1, \ldots, e_5$. For a given rainbow colouring, we permute the colours of the edges $e_1, \ldots, e_5$ in such a way that $C_1$ gets the prescribed pattern. We can do this because there is no prescribed pattern for $C_2$.

In the second case, the circuits $C_1$ and $C_2$ are both $5$-circuits. The vertices $v_{C_1}$ and $v_{C_2}$ are connected by $4$ edges $e_1, \ldots, e_4$. We permute the colours of these edges so that $C_1$ gets the prescribed pattern. No other required patterns in the graph are effected.

In the third case, we have one $5$-circuit $C_1$ and a circuit $C_2$ with $|C_2|\ge 7$ (if $C_2$ was a $6$-circuit, then there would be a $3$-cut in $G/F$). The vertices $v_{C_1}$ and $v_{C_2}$ are connected by $4$ edges $e_1, \ldots, e_4$. If $|C_2|\neq 8$, then we permute the colours of edges $e_1, \ldots, e_4$ so that $C_1$ gets the prescribed pattern without effecting other required patterns in the graph.
However, if $|C_2|=8$, then we need to consider the pattern of $C_2$ since it has to satisfy the part $5$ of this lemma. Let $f_1f_2f_3f_4e_1e_2e_3e_4$ denote the boundary edges of $C_2$. 
The edges $e_5, f_1, \dots, f_4$ form a $5$-edge-cut in $G/F$, therefore, each colour has to be used one or three times on these edges in the rainbow colouring. Without loss of generality suppose that the colour R is used three times on $\{e_5, f_1, \dots, f_4\}$. There are, up to symmetry and colour permutations, five possible colourings of $f_1, f_2, f_3, f_4$ (the colour of $e_5$ is determined by these four colours). For each of these colourings, we can choose the colours of $e_1, \dots, e_4$ in such a way that the circuits have required patterns. The possible colourings of $f_1, f_2, f_3, f_4$ are listed in the table below, where one can check the patterns of $C_1$ (in column 4) and the patterns of $C_2$ (we get the pattern by  merging columns 1 and 3, moreover, in column 5 the same pattern is permuted
by a colour permutation on row 6 and then it is rotated and/or reversed to match some pattern in statement~5). 
\begin{center}
\begin{tabular}{cccccc}
$f_1f_2f_3f_4$ &  $e_5$ & $e_1e_2e_3e_4$ & $e_5e_3e_1e_4e_2$ & pattern of $C_2$ & colour permutation \\\hline
RRRG & B & BGBR & BBBRG & RRRGBGBR & \text{id}\\
RRGR & B & BRBG & BBBGR & RRGRBRBG & \text{id}\\
RRGB & R & RBRG & RRRGB & RRGRBRBG & \text{id}\\
RGBR & R & BRGR & RGBRR & RRGRBRBG & \text{id}\\
RGRB & R & GGBG & RBGGG & RRGRBRBG & (RGB)
\end{tabular}
\end{center}

Now we prove the last part of the lemma.
Let $F_1,\ldots, F_6$ be six rainbow $2$-factors obtained by all permutations of the colours R, G, and B in $G/F$. By definition of $P(F)$ we have 
$\sum_{i=1}^6 P(F_i) = 14d_2 + 6d_4 + 6d_6$. Therefore, at least one of the rainbow $2$-factors satisfies $P(F) \ge 7/3 \cdot d_2 + d_4 + d_6$. The other conditions of the lemma do not change with permutation of colours, hence we choose this $2$-factor.
\end{proof}

\subsection{The first cover}
Let $C$ be a circuit of a $2$-factor of a cubic graph and let $E$ be a subset of edges outside $C$. We denote by $C(E)$ the set of vertices of $C$ that are incident with the edges from $E$. 
If $|C(E)|$ is even, then we can partition the edges of $C$ into two sets $C(E)^A$ and $C(E)^B$  so that every vertex from $C(E)$ is incident with one edge from $C(E)^A$ and one edge from $C(E)^B$ and every vertex of $C$ not from $C(E)$ is incident with two edges from $C(E)^A$ or two edges from $C(E)^B$. We fix $C(E)^A$ and $C(E)^B$ so that $|C(E)^A| \le |C(E)^B|$.

Let $F$ be the rainbow $2$-factor from Lemma~\ref{patternLemma} and let ${\cal R}$, ${\cal G}$, and ${\cal B}$ be the sets of edges coloured by R, G, and B, respectively.
To obtain the first cycle cover we follow the construction from Theorem 17 in \cite{kaiser}.
We define three cycles ${\cal C}_1 = {\cal R} \cup {\cal G} \cup E_1$, ${\cal C}_2 = {\cal R} \cup {\cal B} \cup E_2$, and ${\cal C}_3 = {\cal G} \cup {\cal B} \cup E_3$, where
$E_1$ is $\bigcup_{C \in F} C({\cal R} \cup {\cal G})^A$, 
$E_2$ contains for each circuit either  $C({\cal R} \cup {\cal B})^A$ or $C({\cal R} \cup {\cal B})^B$ depending on the size of the intersection with $E_1$ (we choose the one with the smaller intersection), 
and $E_3$ contains the edges of $F$ that are contained in both $E_1$ and $E_2$ or that are not contained in any of them. One can see that for each circuit $C$ of $F$, the edges $E_3$ contain either $C({\cal G} \cup {\cal B})^A$ or $C({\cal G} \cup {\cal B})^B$.
Let $d_i$ be the number of $i$-circuits in $F$ and let $v_i$ be the number of vertices in $i$-circuits in $F$. 

\begin{lemma}
\label{1CoverLength}
The total length of the first cover is at most
\begin{eqnarray*}
& &2 v_2 + 2  v_4 + 12/5 \cdot v_5 + 14/6 \cdot v_6 + 16/7 \cdot v_7 + 18/8 \cdot v_8 
+ 22/9 \cdot v_9 + 24/10 \cdot v_{10} +\\
& &+ 26/11 \cdot v_{11} + \left(\sum_{i=12}^\infty 5/2 \cdot v_i \right) - 2|{\cal P}_2 \cup {\cal P}_3|.
\end{eqnarray*}
\end{lemma}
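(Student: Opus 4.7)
The plan is to compute the total length of the cover circuit by circuit. Each non-$F$ edge appears in exactly two of the cycles ${\cal C}_1, {\cal C}_2, {\cal C}_3$ (red in ${\cal C}_1$ and ${\cal C}_2$, green in ${\cal C}_1$ and ${\cal C}_3$, blue in ${\cal C}_2$ and ${\cal C}_3$), and since there are $|V|/2$ such edges they contribute exactly $|V| = \sum_k v_k$ to the total length. It remains to bound $|E_1 \cap C| + |E_2 \cap C| + |E_3 \cap C|$ for each circuit $C$ of $F$. The key identity, which I would derive first from the definition of $E_3$ (edges of $F$ contained in both $E_1$ and $E_2$ or in neither) and inclusion-exclusion, is
\[
|E_1 \cap C| + |E_2 \cap C| + |E_3 \cap C| = |C| + 2\,|E_1 \cap E_2 \cap C|,
\]
so a $k$-circuit $C$ contributes exactly $2k + 2\,|E_1 \cap E_2 \cap C|$ to the total length, and the whole question reduces to bounding the overlap $|E_1 \cap E_2 \cap C|$.

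The overlap is controlled by the arc decomposition of $C$. For ${\cal E} \in \{{\cal R} \cup {\cal G}, {\cal R} \cup {\cal B}\}$, the set $C({\cal E})^A$ arises from an alternating $2$-colouring of the arcs of $C$ delimited by the vertices in $C({\cal E})$; $E_1 \cap C$ is the smaller side of this partition for ${\cal R} \cup {\cal G}$, and $E_2 \cap C$ is chosen so as to minimise the intersection with $E_1$. Since $|E_1 \cap C| \le k/2$ and the choice for $E_2$ can at worst split $E_1$ in half, we obtain the uniform bound $|E_1 \cap E_2 \cap C| \le k/4$, which gives the per-circuit contribution $5k/2$; this already yields the term $\sum_{i \ge 12} 5/2 \cdot v_i$ of the claim.

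For the shorter circuits the pattern constraints of parts~3--5 of Lemma~\ref{patternLemma} drastically restrict the possibilities, and I would carry out a direct case analysis on each allowed pattern. For $k \in \{2, 4\}$ the arcs are short enough to force overlap $0$, giving contributions $4$ and $8$; for $k \in \{5, 6, 7, 8\}$ every allowed pattern admits a choice of sides with overlap at most $1$, giving $12$, $14$, $16$, $18$; for $k \in \{9, 10, 11\}$ slightly more delicate checks — using that the number of edges of each colour in a $5$-edge-cut of $G/F$ has the same parity, a consequence of the $5$-odd-edge-connectivity and the nowhere-zero $\mathbb{Z}_2^{\; 2}$-flow — give overlap at most $2$ and yield $22$, $24$, $26$.

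The final term $-2\,|{\cal P}_2 \cup {\cal P}_3|$ reflects a saving inside the special subgraphs. By condition~6 of Lemma~\ref{patternLemma} every special subgraph contains a $5$-circuit with pattern RRRGB; a direct computation for this specific pattern shows that the arc partitions for $C({\cal R} \cup {\cal G})$ and $C({\cal R} \cup {\cal B})$ can be chosen disjoint, so $|E_1 \cap E_2 \cap C| = 0$ and this circuit contributes only $10$ rather than the general $5$-circuit bound of $12$, a saving of exactly $2$ per special subgraph. The main difficulty I foresee is the pattern-by-pattern verification in the $8$-circuit case, where Lemma~\ref{patternLemma}(5) lists sixteen patterns and in each one must choose the optimal side for both $E_1$ and $E_2$; the $9$-, $10$-, and $11$-circuit cases are analogous but require one to track parities of colour counts in the corresponding cuts.
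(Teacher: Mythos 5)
Your proposal follows essentially the same route as the paper's proof: the non-$F$ edges contribute exactly $|V|=\sum_i v_i$, each circuit $C$ of $F$ contributes $|C|+2|E_1\cap E_2\cap C|$, the minimizing choice of $E_2$ gives the general overlap bound $\lfloor |C|/4\rfloor$, and the pattern restrictions of Lemma~\ref{patternLemma} supply the improvements for $4$-circuits, $8$-circuits and the special $5$-circuits of pattern RRRGB (overlap $0$), which is precisely where the $-2|{\cal P}_2\cup{\cal P}_3|$ term comes from. Two minor remarks: for $9$-, $10$- and $11$-circuits no parity or pattern argument is needed, since the general bound together with integrality already gives overlap at most $2$; and the sixteen-pattern verification for $8$-circuits that you defer is exactly the step the paper delegates to Kaiser et al.
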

\begin{proof}
First, we bound the number of edges from $F$ are used in the cover.
Each edge of $F$ is covered either one or three times. The number of edges covered three times in each circuit $C$ of $F$ cannot exceed $\lfloor|C|/4\rfloor$, because for $C_1$ the number of edges of $C({\cal R} \cup {\cal G})^A$ is at most $|C|/2$ and at most half of these edges are also contained in $C_2$, and hence also in $C_3$. Therefore, the edges from $C$ are used in the cover at most 
$|C|+2\left\lfloor |C|/4 \right\rfloor$ times.

Using Lemma~\ref{patternLemma} we can make improvements of this bound for $4$, $5$, and $8$-circuits. 
All $4$-circuits in $F$ have the pattern RRRR or RRGG and in both cases $4$ edges are enough to cover these circuits. (The length of the cover on circuits does not change with the permutation of colours.)
At least $|{\cal P}_2 \cup {\cal P}_3|$ circuits of length $5$ have pattern RRRGB and $5$ edges are enough to cover these circuits. Similarly, it can be shown for $8$-circuits that the sixteen patterns from the lemma statement guarantee that we can cover each circuit with at most $10$ edges. (Details can be found in \cite{kaiser}.)
The edges of $F$ are covered at most
\begin{eqnarray*}
& & 2d_2 + 4d_4 + 7d_5 + 8d_6 + 9d_7 + 10d_8 + 13d_9 + 14d_{10} + 15d_{11} +\\
& &+  \left(\sum_{i=12}^\infty 3i/2 \cdot d_i \right) - 2|{\cal P}_2 \cup {\cal P}_3|
\end{eqnarray*}
times and since $d_i = v_i/i$, we have
\begin{eqnarray*}
& & v_2 + v_4 + 7/5 \cdot v_5 + 8/6 \cdot v_6 + 9/7 \cdot v_7 + 10/8 \cdot v_8 + 13/9 \cdot v_9 + 14/10 \cdot v_{10} + 15/11 \cdot v_{11} + \\
& &+ \left(\sum_{i=12}^\infty 3/2 \cdot v_i\right) - 2|{\cal P}_2 \cup {\cal P}_3|.
\end{eqnarray*}
All edges in $G/F$ are used exactly twice, thus we increase the above value by $2d_2 + \sum_{i=4}^\infty id_i = v_2 + \sum_{i=4}^\infty v_i$  and the statement of the lemma follows.
\end{proof}

\subsection{The second cover}
Let $F$ be the rainbow $2$-factor from Lemma~\ref{patternLemma} and let ${\cal R}$, ${\cal G}$, and ${\cal B}$ be the sets of edges coloured by R, G, and B, respectively. The colouring associated with $F$ will be called the \emph{starting colouring}. 
If the graph $G/F$ contains R or G cycles, then we recolour such cycles to B until both 
${\cal R}$ and ${\cal G}$ induce acyclic graph in $G/F$. The new colouring will be called the \emph{modified colouring}. 
The $2$-factor with the modified colouring remains a rainbow $2$-factor. 

Let ${\cal R}^0$, ${\cal G}^0$, and ${\cal B}^0$ be the sets of edges coloured by R, G, and B, respectively in the modified colouring.
We construct the cover as follows. The first cycle consists of R and G edges and $C({\cal R}^0 \cup {\cal G}^0)^A$ for every circuit of $F$. The second cycle consists of R and G edges and $C({\cal R}^0 \cup {\cal G}^0)^B$ for every circuit of $F$. 
The third cycle consists of R and B edges and $C({\cal R}^0 \cup {\cal B}^0)^A$ for every circuit of $F$.

\begin{lemma}
\label{2CoverLength}
The total length of the second cover is at most
\begin{eqnarray*}
& & 7/3 \cdot v_2 + 5/2 \cdot v_4 + 5/2 \cdot v_5 + 7/3 \cdot v_6 + 33/14 \cdot v_7 + 19/8 \cdot v_8 + 41/18 \cdot v_9 + 23/10 \cdot v_{10} +\\  
& & + 49/22 \cdot v_{11}+ \sum_{i=12}^\infty 9/4 \cdot v_i.
\end{eqnarray*}
\end{lemma}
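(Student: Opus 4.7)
The plan is to compute the total length $L$ of the cover as a closed-form sum and then bound each piece.  Each non-$F$ edge appears in cycles $1$, $2$, $3$ with multiplicities $1,1,1$ if coloured R, $1,1,0$ if G, and $0,0,1$ if B, yielding a non-$F$ contribution of $3|{\cal R}^0|+2|{\cal G}^0|+|{\cal B}^0|$.  For each circuit $C$ of $F$, the edges of $E(C)$ are partitioned between $C({\cal R}^0\cup{\cal G}^0)^A$ and $C({\cal R}^0\cup{\cal G}^0)^B$ (appearing in cycles $1$ and $2$ respectively), and in addition those in $C({\cal R}^0\cup{\cal B}^0)^A$ appear in cycle $3$, giving an $F$-contribution $|C|+|C({\cal R}^0\cup{\cal B}^0)^A|$.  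Using $|{\cal R}^0|+|{\cal G}^0|+|{\cal B}^0|=n/2$ to eliminate $|{\cal B}^0|$, I rewrite
\[L\;=\;\tfrac{3n}{2}+\bigl(2|{\cal R}^0|+|{\cal G}^0|\bigr)+\sum_{C\in F}\bigl|C({\cal R}^0\cup{\cal B}^0)^A\bigr|.\]

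Next I would exploit the recolouring procedure, which guarantees that both ${\cal R}^0$ and ${\cal G}^0$ induce acyclic subgraphs of $G/F$.  Letting $t=\sum_i d_i$ denote the number of circuits of $F$, the forest bound yields $|{\cal R}^0|,|{\cal G}^0|\le t-1$, hence $2|{\cal R}^0|+|{\cal G}^0|\le 3(t-1)=3\sum_i v_i/i-3$.  Combined with the trivial estimate $|C({\cal R}^0\cup{\cal B}^0)^A|\le\lfloor|C|/2\rfloor$, this gives
\[L\;\le\;\sum_i v_i\!\left(\tfrac{3}{2}+\tfrac{3}{i}+\tfrac{\lfloor i/2\rfloor}{i}\right)-3,\]
and a direct computation shows that the per-vertex coefficient equals precisely $5/2,\,33/14,\,19/8,\,41/18,\,23/10,\,49/22$ for $i\in\{5,7,8,9,10,11\}$ and is at most $9/4$ for every $i\ge 12$.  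For these lengths the claim is already established.

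For $i\in\{2,4,6\}$ the raw per-vertex coefficients $7/2$, $11/4$, $5/2$ exceed the target values $7/3$, $5/2$, $7/3$ by $7/6$, $1/4$, $1/6$ respectively, amounting to a total overhead of $7/6\cdot v_2+1/4\cdot v_4+1/6\cdot v_6=7/3\cdot d_2+d_4+d_6$.  The final step is to recover this overhead from the improvement function $P(F)$.  A pattern-by-pattern analysis of the types allowed by parts (3)--(4) of Lemma~\ref{patternLemma} shows that each G-coloured vertex shrinks the smaller part $C({\cal R}^0\cup{\cal B}^0)^A$ (for example, a $4$-circuit of pattern RRGG gives $|C({\cal R}^0\cup{\cal B}^0)^A|=1$ instead of $2$, saving $1$; and similarly each $6$-circuit with a G-vertex gains a saving of at least $1$ in the same quantity), while in the monochromatic cases the high value of $r_C$ forces ${\cal R}^0$ to spend $r_C$ of its $t-1$ edges at $v_C$, which tightens the $3(t-1)$ estimate elsewhere in the graph.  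In every case the realised saving in $L$ is at least $P(C,F)$.  Summing over all circuits and invoking part (7) of Lemma~\ref{patternLemma}, namely $P(F)\ge 7/3\cdot d_2+d_4+d_6$, brings the per-vertex coefficients for $i=2,4,6$ down to $7/3,\,5/2,\,7/3$ as required.

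The main obstacle will be the pattern-by-pattern case analysis, especially for the five allowed $6$-circuit patterns RRRRRR, RRRRGG, RRGRRG, RRGGBB, RRGBBG, each of which distributes the saving differently between a drop in $|C({\cal R}^0\cup{\cal B}^0)^A|$ and a refinement of the forest accounting for ${\cal R}^0$.  Once this bookkeeping is carried out, the local inequalities assemble into the global bound stated in the lemma.
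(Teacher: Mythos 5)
Your setup is sound and matches the paper's skeleton: the cover length decomposes as $3|{\cal R}^0|+2|{\cal G}^0|+|{\cal B}^0|$ on $G/F$ plus $|C|+|C({\cal R}^0\cup{\cal B}^0)^A|$ per circuit, the naive per-circuit coefficient $3i/2+3+\lfloor i/2\rfloor$ gives exactly the stated values for $i=5,7,\dots,11$ and at most $9/4$ per vertex for $i\ge 12$, and the excess for $i=2,4,6$ is exactly $7/3\cdot d_2+d_4+d_6$, to be recovered from $P(F)$. The gap is in how you recover it. Your only quantitative tools are (i) shrinkage of $|C({\cal R}^0\cup{\cal B}^0)^A|$ below $\lfloor|C|/2\rfloor$ and (ii) the forest bound $|{\cal R}^0|,|{\cal G}^0|\le t-1$, which you have already applied globally. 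Consider a $4$-circuit whose \emph{modified} type is RRRR, RRBB, RBRB or BBBB (all of these can arise, e.g.\ RRGG with the two G edges lying on a G-cycle that is recoloured to B). Then every vertex of $C$ lies in $C({\cal R}^0\cup{\cal B}^0)$, so the third cycle uses $2$ edges of $C$ and $\text{act}(C)=6$ equals the naive bound; there is no G vertex to produce a shrink; and the bound $3(t-1)$ contains no residual slack that a "high value of $r_C$" could tighten — it is a single global inequality, already spent. Yet $P(C,F)=1$ must be recovered for each such circuit, so your claimed per-circuit inequality fails there (and similarly for $2$-circuits of modified type RR or BB, where up to $3$ must be recovered).

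The missing idea is the refinement the paper uses: because ${\cal R}^0$ and ${\cal G}^0$ are acyclic in $G/F$, one has $|{\cal R}^0|\le |V(G/F)|-i_R-1$ and $|{\cal G}^0|\le |V(G/F)|-i_G-1$, where $i_R$ (resp.\ $i_G$) counts vertices of $G/F$ incident with no R (resp.\ no G) edge. Distributing this vertex-by-vertex gives an extra saving of $2i_{RC}+i_{GC}$ on each circuit $C$, and it is precisely this term that covers the monochromatic and B-heavy modified types (e.g.\ $i_{GC}=1$ supplies the missing unit for RRRR/RRBB $4$-circuits, and $2i_{RC}+i_{GC}=3$ supplies the $3$ needed for a BB $2$-circuit). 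You would also need to be explicit about the starting-versus-modified colouring: $P(C,F)$ is defined from the starting colouring, while your case analysis sees only the modified one, so you must argue (as the paper does) which modified types can occur for each starting pattern — for instance that a $4$-circuit can never have modified type RGRG, and that a $6$-circuit with $i_{RC}=i_{GC}=0$ and no B in the starting colouring has $P(C,F)=0$. With the refined forest bound and that case analysis your argument becomes essentially the paper's proof; without them the key claim "the realised saving is at least $P(C,F)$" is unsupported and false as stated.
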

\begin{proof}
Let $i_{RC}=1$ if $C$ is not incident to a R edge and $i_{RC}=0$ otherwise.
Let $i_{GC}=1$ if $C$ is not incident to a G edge and $i_{GC}=0$ otherwise.
Let $i_R$ be the number of vertices of $G/F$ that are incident to no R edge and 
let $i_G$ be the number of vertices of $G/F$ that are incident to no B edge.
Let $\text{act}(C)$ be the length of the cover on edges of $C$ and
let $\text{act}(F)$ be the length of the cover on edges of $F$.

Since the R edges induce an acyclic graph in $G/F$, the number of R edges is at most $V(G/F)-i_R-1$.
Since the G edges induce an acyclic graph in $G/F$, the number of G edges is at most $V(G/F)-i_G-1$.
Therefore, we can bound the total number of edges of $G/F$ used in the cover by $E(G/F)+2(V(G/F)-i_R)+V(G/F)-i_G$
and the total number of edges in the cover by $\text{act}(F)+E(G/F)+2(V(G/F)-i_R)+V(G/F)-i_G$,
which can be also expressed by 
\begin{eqnarray}
\sum_{C\in F} \left( \text{act}(C)+\frac{|C|}{2}+3-2i_{RC}-i_{GC} \right). \label{eqnc21}
\end{eqnarray}
For every $C\in F$ we are going to prove that 
\begin{eqnarray}
\text{act}(C)+\frac{|C|}{2}+3-2i_{RC}-i_{GC} \le \left\lfloor \frac{3|C|}{2} \right\rfloor + \frac{|C|}{2}+ 3 - P(C,F). \label{eqnc22}
\end{eqnarray}
All edges of $F$ are covered once by 
the first two cycles of the second cover ant at most $|C|/2$ of the edges of each circuit $C$ in $F$ by the third cycle. Therefore, $\text{act}(C) \le \left\lfloor 3|C|/2 \right\rfloor$.
The improvement function $P(C,F)$ is equal to zero for circuits of lengths other than $2$, $4$, and $6$, and since $i_{RC}$ and $i_{RG}$ are non-negative, the inequality (\ref{eqnc22}) holds for such circuits. Note that to calculate $P(C,F)$ we use the starting colouring not the modified one. 

Next, we consider circuits of length $2$, $4$, and $6$.
Let $C$ be a circuit of length $2$. If $C$ has type BB in the modified colouring, then we need $3$ edges to cover $C$.
Note that $i_{RC}=1$ and $i_{GC}=1$. Recall that $P(C) \le 3$ (the pattern in the starting colouring may be RR) and 
the inequality (\ref{eqnc22}) holds.  
If $C$ has type GG in the modified colouring, then we need $2$ edges to cover $C$.
Note that $i_{RC}=1$ and $i_{GC}=0$. Recall that $P(C) = 3$ and the inequality (\ref{eqnc22}) holds.  
If $C$ has type RR in the modified colouring, then we need $3$ edges to cover $C$.
Note that $i_{RC}=0$ and $i_{GC}=1$. Recall that $P(C) = 1$ and
the inequality (\ref{eqnc22}) holds.  

Let $C$ be a circuit of length $4$. Then $C$ cannot be of type RGRG neither in the starting colouring nor in the modified colouring, because 
to obtain the modified colouring we only recoloured some of the edges to the colour B. 
The following table lists all possible types of $C$ in the modified colouring. 
In all cases the inequality (\ref{eqnc22}) holds.  

\begin{center}
\begin{tabular}{ccccc}
type & $\text{act}(C)$ & $i_{RC}$ & $i_{GC}$ & $P(C)$\\\hline
BBBB & 6 & 1 & 1 & 1 \\
GGGG & 4 & 1 & 0 & 1 \\
RRRR & 6 & 0 & 1 & 1 \\
RRGG & 5 & 0 & 0 & 1 \\
RRBB & 6 & 0 & 1 & 1 \\
RBRB & 6 & 0 & 1 & 1 \\
GGBB & 5 & 1 & 0 & 1 \\
GBGB & 6 & 1 & 0 & 1 
\end{tabular}
\end{center}

Let $C$ be a circuit of length $6$. If $i_{RC}=1$, then even when $P(C) = 2$, the inequality (\ref{eqnc22}) holds.
If $i_{RC}=0$ and $i_{GC}=1$, then $P(C) \le 1$ and the inequality (\ref{eqnc22}) holds.
Therefore, suppose that $i_{RC}=0$ and $i_{GC}=0$. 
If the colour B is missing in the starting colouring of $C$, then $P(C)=0$ because both R and G colour must be present in the starting colouring in order to get both colours in the modified colouring. So the inequality (\ref{eqnc22}) holds. 
This leaves us with the case when all three colours are present in both the starting and the modified colouring. According to Lemma \ref{patternLemma} only patterns RRGGBB and RRGBBG  remain to be considered. There are only four types associated with these two patterns and in each case the inequality (\ref{eqnc22}) holds:
\begin{center}
\begin{tabular}{ccccc}
type & $\text{act}(C)$ & $i_{RC}$ & $i_{GC}$ & $P(C)$\\\hline
RRGGBB & 8 & 0 & 0 & 1\\
RRGBBG & 8 & 0 & 0 & 1\\
RRBGGB & 8 & 0 & 0 & 1\\
RGGRBB & 8 & 0 & 0 & 1\\
\end{tabular}
\end{center}
This concludes the proof of (\ref{eqnc22}). Together with (\ref{eqnc21}) the total length of the cover at $F$ is at most
$$
\sum_{C\in F} \left(\left\lfloor \frac{3|C|}{2} \right\rfloor + \frac{|C|}{2}+ 3 - P(C,F) \right)=
 - P(F) + \sum_{i=2}^\infty \left( \left\lfloor \frac{3i}{2} \right\rfloor + \frac{i}{2} + 3\right) \cdot d_i.
$$
From Lemma \ref{patternLemma} we have $P(F) \ge 7/3 \cdot d_2 + d_4 + d_6$. 
Therefore, the length of the cover is at most
$$
 14/3 \cdot d_2 + 10 d_4+ 25/2 \cdot d_5 + 14 d_6 + 
\sum_{i=7}^\infty \left(\lfloor 3i/2 \rfloor + 3\right) \cdot d_i.
$$
Taking $d_i=i \cdot v_i$ we have
\begin{eqnarray*}
& &7/3 \cdot v_2 + 5/2 \cdot v_4 + 5/2 \cdot v_5 + 7/3 \cdot v_6 + 33/14 \cdot v_7 + 19/8 \cdot v_8 + 41/18 \cdot v_9 + 23/10 \cdot v_{10} + \\
&+& 49/22 \cdot v_{11} + \sum_{i=12}^\infty 9/4 \cdot v_i
\end{eqnarray*}
which concludes the proof of the lemma.
\end{proof}

\subsection{Proof of Theorem~\ref{thmmain}}
\begin{proof}[Proof of Theorem~\ref{thmmain}]
Lemma \ref{1CoverLength} and \ref{2CoverLength} bound the total lengths of the two cycle covers that we constructed. Since the length of the shortest cycle cover does not exceed either of the two bounds, we can bound it by a convex combination of the two bounds. We use a ratio of $1/3:2/3$. The combination is as follows.

\begin{align*}
1/3  \cdot \biggl[&
2v_2 + 2v_4 + 12/5 \cdot v_5 + 14/6 \cdot v_6 + 16/7 \cdot v_7 + 18/8 \cdot v_8 + 22/9 \cdot v_9 \\
&\qquad + 24/10 \cdot v_{10} + 26/11 \cdot v_{11} + \left( \sum_{i=12}^\infty 5/2 \cdot v_i \right) - 2|{\cal P}_2 \cup {\cal P}_3| \biggr] \\
+ 2/3 \cdot \biggl[& 7/3 \cdot v_2 + 5/2 \cdot v_4 + 5/2 \cdot v_5 + 7/3 \cdot v_6 + 33/14 \cdot v_7 + 19/8 \cdot v_8 + 41/18 \cdot v_9\\ 
&\qquad + 23/10 \cdot v_{10} + 49/22 \cdot v_{11} + \sum_{i=12}^\infty 9/4 \cdot v_i \biggr] \\
= \qquad & 20/9 \cdot v_2 + 7/3 \cdot v_4 + 37/15 \cdot v_5 + 7/3 \cdot v_6 + 7/3 \cdot v_7 + 7/3 \cdot v_8 + 7/3 \cdot v_9 \\
&\qquad+ 7/3 \cdot v_{10} + 25/11 \cdot v_{11} + \left(\sum_{i=12}^\infty 7/3 \cdot v_i\right) -  2/3 |{\cal P}_2 \cup {\cal P}_3|\\
\le \qquad & 37/15 \cdot v_5 + 7/3 \cdot (|V(G)|-v_5) - 2/3 \cdot |{\cal P}_2 \cup {\cal P}_3|.
\end{align*}
Therefore, the length of the shortest cover is at most
\begin{eqnarray}
7/3 \cdot |V(G)| + 2 /15 \cdot v_5 - 2/3 \cdot |{\cal P}_2 \cup {\cal P}_3|. \label{mmres}
\end{eqnarray}
Since by Lemma~\ref{patternLemma} (when we choose the $2$-factor $F_1$) we have 
$v_5 \le 1/2 \cdot |V(G)| + 1/3 \cdot |{\cal P}_2| + 1/10 \cdot |{\cal P}_3|$ 
we get that the total length of the shortest cycle cover is at most
\begin{align*}
& 7/3 \cdot |V(G)| + 2 /15 \cdot (1/2 \cdot |V(G)| + 1/3 \cdot |{\cal P}_2| + 1/10 \cdot |{\cal P}_3|) 
- 2/3 \cdot |{\cal P}_2 \cup {\cal P}_3| \\
&\le 12/5 \cdot |V(G)| = 8/5 \cdot |E(G)| = 1.6\cdot |E(G)|. \qedhere
\end{align*}
\end{proof}

\begin{theorem}
\label{thm5Circuit}
Every bridgeless cubic graph with $m$ edges and at most $k$ circuits of length $5$ has a cycle cover of length at most 
$14/9 \cdot m + 1/9 \cdot k$.
\end{theorem}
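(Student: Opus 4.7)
The plan is to rerun the argument used in the proof of Theorem~\ref{thmmain}, but replacing the $2$-factor $F_1$ with $F_2$ from Lemma~\ref{patternLemma}. Inspecting the proof of Theorem~\ref{thmmain}, everything up to and including the bound
\[
7/3 \cdot |V(G)| + 2/15 \cdot v_5 - 2/3 \cdot |{\cal P}_2 \cup {\cal P}_3|
\]
(that is, equation~(\ref{mmres})) depends only on properties~1b, 2--7 of Lemma~\ref{patternLemma}, which hold for $F_2$ as well. Hence the same convex combination of the two cycle covers built from $F_2$ gives a cover whose length is bounded by (\ref{mmres}), where now $v_5$ denotes the number of vertices lying in $5$-circuits of $F_2$.

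I would then plug in the bound provided by part~1b of Lemma~\ref{patternLemma}: the number of $5$-circuits of $F_2$ is at most $k/6$, so $v_5 \le 5k/6$. Substituting and using $|V(G)| = 2m/3$ for cubic graphs, the length of the cover is at most
\[
\frac{7}{3}\cdot\frac{2m}{3} + \frac{2}{15}\cdot\frac{5k}{6} - \frac{2}{3}\cdot |{\cal P}_2 \cup {\cal P}_3| \le \frac{14}{9}\,m + \frac{1}{9}\,k,
\]
which is exactly the claimed bound (discarding the non-positive correction from the special subgraphs).

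There is essentially no obstacle: the substantive work has already been done in the proofs of Lemma~\ref{patternLemma} (which was already formulated to supply two $2$-factors with interchangeable properties) and Theorem~\ref{thmmain}. The only thing to double-check is that the construction of the first and second cycle covers, as well as Lemmas~\ref{1CoverLength} and~\ref{2CoverLength}, are insensitive to which of $F_1$ or $F_2$ is taken as the underlying rainbow $2$-factor --- they are, since those lemmas only invoke parts~2--7 of Lemma~\ref{patternLemma} and never rely on the specific bound on $v_5$. Thus the proof reduces to the one-line calculation above.
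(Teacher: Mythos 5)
Your proposal is correct and is essentially the paper's own proof: the authors likewise apply (\ref{mmres}) with the $2$-factor $F_2$ of Lemma~\ref{patternLemma}, use $v_5 = 5d_5 \le 5k/6$ from part~1b, and conclude $7/3 \cdot |V(G)| + 1/9 \cdot k = 14/9 \cdot m + 1/9 \cdot k$. Your extra remark that Lemmas~\ref{1CoverLength} and~\ref{2CoverLength} and the derivation of (\ref{mmres}) only use properties~2--7 (so they apply equally to $F_2$) is exactly the observation the paper relies on implicitly.
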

\begin{proof}
By Lemma~\ref{patternLemma}, choosing the $2$-factor $F_2$, we have that $v_5 =5 d_5 \le 5 \cdot 1/6 \cdot k$. From (\ref{mmres}) we get that the cover has size at most
$7/3 \cdot |V(G)| + 1/9 \cdot k = 14/9 \cdot m + 1/9 \cdot k$.
\end{proof}

\begin{corollary}
\label{corDisjoint5}
Every bridgeless cubic graph with $m$ edges, such that all circuits of length $5$ are disjoint, 
has a cycle cover of length at most $212/135 \cdot m \approx 1.570 m$.
\end{corollary}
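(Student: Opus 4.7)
The plan is to apply Theorem~\ref{thm5Circuit} directly, with the only real work being to translate the disjointness hypothesis into a bound on the number $k$ of $5$-circuits. Since all $5$-circuits of $G$ are pairwise vertex-disjoint, and each such circuit uses exactly $5$ vertices, I immediately get $k \le |V(G)|/5$. Because $G$ is cubic, $|V(G)| = 2m/3$, so $k \le 2m/15$.

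Substituting into Theorem~\ref{thm5Circuit}, the length of a shortest cycle cover is at most
\[
\frac{14}{9}\,m + \frac{1}{9}\cdot\frac{2m}{15} \;=\; \frac{210}{135}\,m + \frac{2}{135}\,m \;=\; \frac{212}{135}\,m,
\]
which is the required bound. No case analysis or structural argument is needed beyond the cubic vertex count and the disjointness observation, so there is no genuine obstacle here; the corollary is essentially a numerical consequence of Theorem~\ref{thm5Circuit} combined with the trivial upper bound on the number of disjoint $5$-circuits in a cubic graph on $2m/3$ vertices.
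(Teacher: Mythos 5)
Your proof is correct and is exactly the intended derivation: the paper does not spell out a proof for this corollary, but it is stated immediately after Theorem~\ref{thm5Circuit} and follows from it precisely by the observation that pairwise disjoint $5$-circuits in a cubic graph on $n=2m/3$ vertices number at most $n/5 = 2m/15$, giving $14/9\cdot m + 1/9 \cdot 2m/15 = 212/135\cdot m$. The arithmetic checks out.
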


\begin{corollary}
\label{corWithout5}
Every bridgeless cubic graph with $m$ edges without circuits of length $5$ has a cycle cover of length at most $14/9 \cdot m \approx 1.556 m$.
\end{corollary}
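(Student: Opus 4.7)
The plan is to deduce this corollary immediately from Theorem~\ref{thm5Circuit}. That theorem guarantees, for a bridgeless cubic graph $G$ with $m$ edges and at most $k$ circuits of length $5$, a cycle cover of length at most $14/9 \cdot m + 1/9 \cdot k$. Under the hypothesis of the corollary we have $k=0$, so the bound collapses to $14/9 \cdot m$, which is exactly the desired inequality.

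It is worth tracing back how the $5$-circuit term enters, to see that the argument is tight with respect to our hypothesis. The construction in the proof of Theorem~\ref{thmmain} produces a cycle cover of length at most $7/3 \cdot |V(G)| + 2/15 \cdot v_5 - 2/3 \cdot |{\cal P}_2 \cup {\cal P}_3|$ (inequality \eqref{mmres}), where $v_5$ is the number of vertices lying on $5$-circuits of the chosen $2$-factor. If the host graph $G$ contains no $5$-circuits whatsoever, then $v_5 = 0$ for every $2$-factor, regardless of whether we invoke the bound of Theorem~\ref{avoid} or the bound coming from Lemma~\ref{lemmaIFMG} on the proportion of $5$-circuits of $G$ appearing in $F_2$. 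Substituting $v_5 = 0$ and using $|V(G)| = 2m/3$ for a cubic graph gives length at most $7/3 \cdot 2m/3 = 14m/9$, and the $-2/3 \cdot |{\cal P}_2 \cup {\cal P}_3|$ term only strengthens the bound.

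There is no real obstacle: the corollary is a direct specialization of Theorem~\ref{thm5Circuit}, and the entire difficulty was absorbed into the proofs of Theorem~\ref{avoid} and Theorem~\ref{thmmain}. The only point worth recording separately is the resulting comparison with the general $5/3 \cdot m$ bound of Alon--Tarsi and Bermond--Jackson--Jaeger, which shows that a single mild structural assumption (absence of $5$-circuits) already reduces the upper bound from $1.667m$ to $1.556m$, matching the conjectural bound implied by the Fulkerson Conjecture in \cite{MRTZ}.
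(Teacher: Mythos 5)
Your deduction is correct and matches what the paper intends: the corollary is the $k=0$ specialization of Theorem~\ref{thm5Circuit}, and the paper offers no separate argument beyond that. The additional tracing back through \eqref{mmres} and the comparison with the general $5/3\cdot m$ bound are accurate but superfluous to the proof itself.
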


\end{document}